\documentclass{amsart}
\linespread{1.15}

\usepackage{amsmath, amssymb, mathrsfs, verbatim, multirow}
\usepackage[all]{xy}
\usepackage{pifont}
\usepackage{float}

\newtheorem{Teo}{Theorem}[section]
\newtheorem{Prop}[Teo]{Proposition}
\newtheorem{Lema}[Teo]{Lemma}
\newtheorem{Cor}[Teo]{Corollary}

\theoremstyle{definition}
\newtheorem{Def}[Teo]{Definition}

\newtheorem{Obs}[Teo]{Remark}
\newtheorem{Que}[Teo]{Question}
\newtheorem{Exa}[Teo]{Example}

\newcommand{\Q}{\mathbb{Q}}

\newcommand{\N}{\mathbb{N}}

\newcommand{\Llr}{\Longleftrightarrow}
\newcommand{\lra}{\longrightarrow}
\newcommand{\Lra}{\Longrightarrow}

\newcommand{\SU}{\mbox{\rm supp}}

\DeclareMathOperator{\inv}{in}
\begin{document}
\title{On MacLane-Vaqui\'e key polynomials}
\author{Josnei Novacoski}
\thanks{During the realization of this project the author was supported by a grant from Funda\c c\~ao de Amparo \`a Pesquisa do Estado de S\~ao Paulo (process number 2017/17835-9).}

\begin{abstract}
One of the main goals of this paper is to present the relation of limit key polynomials and limit MacLane-Vaqui\'e key polynomials. This is a continuation of the work started in \cite{Spivmahandjul}, where it is proved the equivalent result for key polynomials (which are not limit). Moreover, we present a result (Theorem \ref{lemasobreooutrocoiso}) that generalizes various results in the literature.
\end{abstract}

\keywords{Key polynomials, graded algebras, MacLane-Vaqui\'e key polynomials, abstract key polynomials}
\subjclass[2010]{Primary 13A18}

\maketitle
\section{Introduction}
The concept of key polynomials was introduced in \cite{Mac_1} and \cite{Mac_2} in order to understand extensions of a valuation $\nu_0$ on a field $K$ to the field $K(x)$. The ideia is that for a given valuation $\nu$ on $K(x)$, a key polynomial $Q\in K[x]$ allows us to build new valuations $\nu'$ with $\nu\leq \nu'$ (i.e., $\nu(f)\leq \nu'(f)$ for every $f\in K[x]$), such that $\nu(Q)<\nu'(Q)$. MacLane proved that if $\nu_0$ is discrete, then every valuation $\nu$ on $K(x)$, extending $\nu_0$, can be build by starting with a \textit{monomial valuation} and use a sequence (of order type at most $\omega$) built iteractively to obtain $\nu$.

A major development was presented by Vaqui\'e in \cite{Vaq_1} and \cite{Vaq_2}. He introduced the concept of limit key polynomial, and proved that if we allow these objects in the sequence, then we can drop the assumption of $\nu_0$ being discrete in MacLane's main result (in this case, the order type of the sequence can be larger than $\omega$). Key polynomials as defined by MacLane and Vaqui\'e will be called MacLane-Vaqui\'e key polynomials in this paper.

An alternative definition of key polynomials was introduced in \cite{Spivmahandjul} and \cite{SopivNova} (in \cite{Spivmahandjul} they are called abstract key polynomials). The main difference between these two objects is that Maclane-Vaqui\'e's key polynomials allow us to extend a valuation, while key polynomials allow us to \textit{truncate} a valuation. In particular, if we start with a valuation $\nu$ on $K[x]$ and consider the valuation $\nu'$ obtained by MacLane-Vaqui\'e's method, then $\nu\leq\nu'$ and by the method in \cite{Spivmahandjul} and \cite{SopivNova}, we obtain $\nu'\leq\nu$. Because of this, key polynomials are more closely related to other similar objects in the literature, such as \textit{pseudo-convergent sequences} as defined in \cite{Kap} and \textit{minimal pairs} as defined in \cite{Kand2}. The relations between these objects were explored in \cite{Nov} and \cite{SopivNova}.

In \cite{Spivmahandjul}, the relation between key polynomials and MacLane-Vaqui\'e key polynomials have been studied. One of the main results of this paper (Theorem \ref{maintheomrlimitvsvauqiemabca}) is to extend their result to limit key polynomials. Some of the results contained here have been proved in the papers cited above. We decided to rewrite them here, because we believe our proofs are simpler. Also, this makes most of this paper self-contained.

The main strategy to build valuations with the objects mentioned above is the following. Let $\Gamma$ be an ordered abelian group and $\Gamma_\infty:=\Gamma\cup\{\infty\}$ where $\infty$ is a symbol not in $\Gamma$ and the extension of order and addition extends in the obvious way from $\Gamma$ to $\Gamma_\infty$. For a given $n\in\N$ we define
\[
K[x]_n:=\{p(x)\in K[x]\mid\deg(p)<n\}.
\]
Let $\mu:K[x]_n\lra \Gamma_\infty$ be a map and let $q\in K[x]$ be a polynomial of degree $n$, $\Gamma'$ a group containing $\Gamma$ and $\gamma\in \Gamma'$. For every polynomial $f\in K[x]$ we can write uniquely
\[
f=f_0+f_1q+\ldots+f_rq^r\mbox{ with }f_i\in K[x]_n\mbox{ for every }i, 0\leq i\leq n.
\]
The expression above is called the \textbf{$q$-expansion of $f$}. Hence, we can define a map $\mu':K[x]\lra \Gamma'_\infty$ by
\[
\mu'\left(f\right)=\min\{\mu(f_i)+i\gamma\}.
\]
Various results in the literature show that under certain assumptions on $\mu$, $q$ and $\gamma$, the map $\mu'$ is a valuation. Examples of this are Theorems 3.1 and 4.2 of \cite{Mac_1}, Proposition 1.22 of \cite{Vaq_1}, Proposition 2.6 of \cite{SopivNova} (which is the same as Proposition 15 of \cite{Spivmahandjul}) and Theorem 3 of \cite{Kap}. Our next result gives a criterium for $\mu$ in order for $\mu'$ to be a valuation. All the results mentioned above follow as corollaries of our theorem.

Let $S$ be a subset of a ring $R$ and $\Gamma$ an ordered abelian group.  We will consider the following properties for a map $\mu:S\lra \Gamma_\infty$. 
\begin{description}
\item[(V1)] For every $f,g\in S$, if $fg\in S$, then
\[
\mu(fg)=\mu(f)+\mu(g).
\]
\item[(V2)] For every $f,g\in S$, if $f+g\in S$, then
\[
\mu(f+g)\geq\min\{\mu(f),\mu(g)\}.
\]
\end{description}

\begin{Teo}\label{lemasobreooutrocoiso}
Let $S$ be a subset of $K[x]$ closed by multiplication with $K[x]_n\subseteq S$ and $q\in K[x]$ a polynomial of degree $n$. Assume that $\mu:S\lra \Gamma_\infty$ satisfies \textbf{(V2)} and that for every $\overline f,\overline g\in K[x]_n$ we have
\begin{description}
\item[(i)] $\mu(\overline f\overline g)=\mu(\overline f)+\mu(\overline g)$; and
\item[(ii)] if $\overline f\overline g=aq+c$ with $c\in K[x]_n$ (and consequently $a\in K[x]_n$), then
\[
\mu(c)=\mu(\overline f\overline g)<\mu(a)+\gamma.
\]
\end{description}
Then $\mu'$ satisfies the property \textbf{(V1)} and \textbf{(V2)}.
\end{Teo}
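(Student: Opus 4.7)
The plan is to verify (V2) for $\mu'$ directly from the definition, then prove (V1) in two stages: the easy direction $\mu'(fg) \geq \mu'(f) + \mu'(g)$, and the harder equality obtained by identifying a single coefficient in the $q$-expansion of $fg$ whose value can be computed without cancellation.

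Property (V2) is almost immediate. Given $f = \sum_i f_i q^i$ and $g = \sum_i g_i q^i$ with $f_i,g_i \in K[x]_n$, the sum $f+g$ has $q$-expansion $\sum_i (f_i+g_i)q^i$ since $K[x]_n$ is closed under addition. Because $f_i, g_i, f_i+g_i$ all lie in $K[x]_n \subseteq S$, applying (V2) for $\mu$ to each coefficient and taking the minimum in $i$ yields $\mu'(f+g) \geq \min\{\mu'(f),\mu'(g)\}$.

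For the subadditivity direction of (V1), I would expand $fg = \sum_{i,j} f_ig_j\,q^{i+j}$ and invoke (ii) to write each $f_ig_j = a_{ij}q + c_{ij}$ with $a_{ij},c_{ij}\in K[x]_n$ (membership of $a_{ij}$ is forced by $\deg(f_ig_j)\leq 2n-2$). Regrouping by powers of $q$, the $q$-expansion of $fg$ has $k$-th coefficient $(fg)_k = C_k + A_k$ with $C_k = \sum_{i+j=k} c_{ij}$ and $A_k = \sum_{i+j=k-1} a_{ij}$. Combining (i), the identity $\mu(c_{ij}) = \mu(f_i)+\mu(g_j)$ from (ii), and the strict estimate $\mu(a_{ij})+\gamma > \mu(c_{ij})$, a routine bookkeeping with (V2) inside $K[x]_n$ gives $\mu((fg)_k)+k\gamma \geq \mu'(f)+\mu'(g)$ for every $k$, hence $\mu'(fg) \geq \mu'(f)+\mu'(g)$.

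The main obstacle is the reverse inequality, where I must rule out cancellation. Set $s = \mu'(f)+\mu'(g)$ and $I(h) = \{i : \mu(h_i)+i\gamma = \mu'(h)\}$ for $h\in\{f,g\}$. Choose $i_0 = \max I(f)$, $j_0 = \max I(g)$, and let $k_0 = i_0+j_0$. The crucial combinatorial observation is that $(i_0,j_0)$ is the \emph{unique} pair $(i,j)$ with $i+j=k_0$, $i\in I(f)$ and $j\in I(g)$: any other candidate would force $i<i_0$ and therefore $j>j_0$, contradicting the maximality of $j_0$. All other summands in $C_{k_0}$ then carry $\mu$-value strictly greater than $\mu(c_{i_0j_0}) = s-k_0\gamma$, and every summand of $A_{k_0}$ satisfies $\mu(a_{ij})+k_0\gamma > s$ by the strict part of (ii). I would then close with the standard domination principle ``$\mu(\alpha)<\mu(\beta) \Rightarrow \mu(\alpha+\beta)=\mu(\alpha)$'', which holds on $K[x]_n$ thanks to (V2) together with $\mu(-1)=0$ (itself extracted from (i) applied to $(-1)(-1)=1$ and $1\cdot 1 = 1$). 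This forces $\mu((fg)_{k_0}) = s-k_0\gamma$, hence $\mu'(fg) \leq s$, completing the argument.
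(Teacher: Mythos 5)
Your argument is correct and follows essentially the same strategy as the paper: derive (V2) coefficientwise, get $\mu'(fg)\geq\mu'(f)+\mu'(g)$ from the $q$-expansion of $fg$ via (i) and (ii), and then obtain the reverse inequality by singling out one coefficient $a_{k_0}$ of the $q$-expansion of $fg$ whose $\mu$-value is forced to equal $\mu'(f)+\mu'(g)-k_0\gamma$. The only differences are cosmetic: you take $i_0,j_0$ maximal where the paper takes them minimal (either choice yields uniqueness of the extremal pair summing to $k_0$), and you explicitly justify the domination principle by extracting $\mu(-1)=0$ from (i), a step the paper invokes without comment.
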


Recently, in \cite{leloup}, Leloup defines key polynomials essentially as elements which satisfy the property \textbf{(ii)} above. Our theorem shows that indeed, this is the property that allow us to extend or contruct valuations.

This paper is divided as follows. Section \ref{proofodthrems11} is devoted to prove Theorem \ref{lemasobreooutrocoiso} and present a first corollary (the remaining consequences of Theorem \ref{lemasobreooutrocoiso} will appear in their respective sections). In Section \ref{keypolynomials} we recall the definition of key polynomials and their main properties. We also add a few results that do not appear in \cite{Spivmahandjul} or \cite{SopivNova} and will be needed here. In Section \ref{gradedalgebras}, we present some results about the graded algebra of a truncated valuation. These results will be essential to prove the comparison results. In Section \ref{Maclanevaquie}, we present the theory of MacLane-Vaqui\'e key polynomials. Finally, in Section \ref{comparison} we prove the comparison results between key polynomials and MacLane-Vaqui\'e key polynomials.
\section{Proof of Theorem \ref{lemasobreooutrocoiso}}\label{proofodthrems11}
We start by presenting some basic definitions.
\begin{Def}
A map $\nu:R\lra\Gamma_\infty$ is called a valuation if it satisfies \textbf{(V1)}, \textbf{(V2)} and
\begin{description}
\item[(V3)] $\nu(1)=0$ and $\nu(0)=\infty$.
\end{description}
\end{Def}

The set $\SU(\nu):=\{a\in R\mid\nu(a)=\infty\}$ is a prime ideal of $R$. In particular, if $R$ is a field, then this definition is the same as the classic definition of valuation on a field. Also, if $R=K[x]$, then the restriction $\nu_0$ of a valuation $\nu$ on $R$ is a valuation on $K$. There is a bijection between valuations $\nu$ on $K[x]$ whose restriction to $K$ is $\nu_0$ and all valuations on simple extensions of $K$ extending $\nu_0$. More precisely, valuations $\nu$ for which $\SU(\nu)=(0)$ correspond to valuations on simple transcendental extensions of $(K,\nu_0)$ and valuations $\nu$ for which $\SU(\nu)\neq (0)$ correspond to valuations on simple algebraic extensions of $(K,\nu_0)$.

\begin{Obs}
Some authors use the term valuation to refer only to valuations with $\SU(\nu)=0$ and \textit{pseudo-valuations} for those which $\SU(\nu)\neq 0$. Also, some authors use the term \textit{Krull valuations} for those which $\SU(\nu)=0$. We do not make such distinctions because we do not find it necessary.
\end{Obs}
We denote by $\nu(R)$ the image of all elements of $R$ in $\Gamma$, i.e.,
\[
\nu(R)=\{\nu(a)\mid a\in R\setminus \SU(\nu)\}.
\]
We start with the following lemma.
\begin{Lema}\label{Lemasobreextsvelidesitra}
If $\mu$ satisfies \textbf{(V2)}, then $\mu'$ also satisfies \textbf{(V2)}.
\end{Lema}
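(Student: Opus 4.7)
The plan is to unpack the definition of $\mu'$ via $q$-expansions and reduce the inequality for $\mu'$ to the hypothesis (V2) for $\mu$ applied coefficient by coefficient.

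First I would fix $f,g\in K[x]$ and write their $q$-expansions
\[
f=\sum_{i=0}^r f_iq^i,\qquad g=\sum_{i=0}^r g_iq^i,\qquad f_i,g_i\in K[x]_n,
\]
(padding with zeros so the two sums have the same length). Since $K[x]_n$ is closed under addition, $f_i+g_i\in K[x]_n$ for each $i$, so by the uniqueness of the $q$-expansion,
\[
f+g=\sum_{i=0}^r (f_i+g_i)q^i
\]
is the $q$-expansion of $f+g$.

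Next I would apply the definition of $\mu'$ to $f+g$ and invoke (V2) for $\mu$ on each coefficient (noting $f_i,g_i,f_i+g_i\in K[x]_n\subseteq S$):
\[
\mu'(f+g)=\min_i\{\mu(f_i+g_i)+i\gamma\}\geq \min_i\{\min\{\mu(f_i),\mu(g_i)\}+i\gamma\}.
\]
Then I would split the outer minimum, using that $\min$ distributes over addition by a fixed element $i\gamma$ and over the inner $\min$, to conclude
\[
\min_i\{\min\{\mu(f_i),\mu(g_i)\}+i\gamma\}=\min\bigl\{\min_i\{\mu(f_i)+i\gamma\},\,\min_i\{\mu(g_i)+i\gamma\}\bigr\}=\min\{\mu'(f),\mu'(g)\},
\]
which is the desired inequality.

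There is no real obstacle here: the argument is purely formal and uses only the uniqueness of the $q$-expansion together with the elementary fact that the additive (V2) inequality is preserved under taking minima of shifted values. The hypotheses (i), (ii) of Theorem \ref{lemasobreooutrocoiso} play no role in this lemma; they will be needed only when establishing (V1) for $\mu'$.
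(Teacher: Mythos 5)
Your proof is correct and follows essentially the same argument as the paper: expand $f,g$ in $q$-adic form with padding, apply \textbf{(V2)} for $\mu$ coefficientwise, and split the resulting minimum. The only addition is your explicit remark that $K[x]_n$ is closed under addition so that the coefficientwise sum is indeed the $q$-expansion of $f+g$, which the paper leaves implicit.
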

\begin{proof}
Take two polynomials $f,g\in K[x]$ and write (adding zero terms if necessary)
\[
f(x)=f_0+f_1q+\ldots+f_n q^r\mbox{ and }g(x)=g_0+g_1q+\ldots+g_n q^r,
\]
with $f_i,g_i\in K[x]_n$ for every $i$, $0\leq i\leq r$. Then
\begin{displaymath}
\begin{array}{rcl}
\mu'(f+g)&= &\displaystyle\min_{0\leq i\leq r}\{\min\{\mu(f_i+g_i)\}+i\gamma\}\\[8pt]
&\geq&\displaystyle\min_{0\leq i\leq r}\{\min\{\mu(f_i),\mu(g_i)\}+i\gamma\}\\[8pt]
&=&\displaystyle\min\left\{\min_{0\leq i\leq r}\{\mu(f_i)+i\gamma\},\min_{0\leq i\leq r}\{\mu(g_i)+i\gamma\}\right\}\\[8pt]
&=&\min\{\mu'(f),\mu'(g)\}.
\end{array}
\end{displaymath}
Hence, \textbf{(V2)} is satisfied for $\mu'$.
\end{proof}

\begin{proof}[Proof of Theorem \ref{lemasobreooutrocoiso}]
Take $f,g\in K[x]$ and consider their $q$-expansions
\[
f=f_0+\ldots+f_rq^r\mbox{ and }g=g_0+\ldots+g_sq^s.
\]
In particular, for every $i$, $0\leq i\leq r$, and $j$, $0\leq j\leq s$ we have
\begin{equation}\label{equjajudanolemadoi}
\mu(f_i)\geq \mu'(f)-i\gamma\mbox{ and }\mu(g_j)\geq \mu'(g)-j\gamma.
\end{equation}
For each $i$, $0\leq i\leq r$, and $j$, $0\leq j\leq s$, let
\[
f_ig_j=a_{ij}q+c_{ij}
\]
be the $q$-standard expansion of $f_ig_j$.

If $r=0=s$, then by our assumptions
\begin{displaymath}
\begin{array}{rcl}
\mu'(fg)&=&\min\{\mu(c_{00}),\mu(a_{00})+\gamma\}=\mu(c_{00})=\mu(f_0g_0)\\[8pt]
&=&\mu(f_0)+\mu(g_0)=\mu'(f)+\mu'(g).
\end{array}
\end{displaymath}

Since $\mu$ satisfies \textbf{(V2)}, by Lemma \ref{Lemasobreextsvelidesitra}, also $\mu'$ satisfies \textbf{(V2)}. Hence
\[
\mu'(fg)\geq\min_{i,j}\{\mu'(f_ig_jq^{i+j})\}=\min_{i,j}\{\mu'(f_iq^i)+\mu'(g_jq^j)\}=\mu'(f)+\mu'(g).
\]
For each $i$, $0\leq i\leq r$, and $j$, $0\leq j\leq s$, our assumptions give
\[
\mu'(f_iq^i)+\mu'(g_jq^j)=\mu(f_ig_j)+(i+j)\gamma=\nu(c_{ij})+(i+j)\gamma=\mu'(c_{ij}q^{i+j}).
\]
Let $i_0$, $0\leq i_0\leq r$ and $j_0$, $0\leq j_0\leq s$, be the smallest non-negative integers such that the equality holds in \eqref{equjajudanolemadoi}. Set $k_0:=i_0+j_0$. Then for every $i<k_0$ if $i\neq i_0$, then
\[
\mu'(f)<\mu(f_i)-i\gamma\mbox{ or } \mu'(g)<\mu(g_{k_0-i})-(k_0-i)\gamma.
\]
Then
\begin{displaymath}
\begin{array}{rcl}
\mu(c_{i_0j_0})&=&\mu(f_{i_0})+\mu(g_{j_0})\\[8pt]
&= &\mu'(f)+i_0\gamma+\mu'(g)+j_0\gamma=\mu'(f)+\mu'(g)+k_0\gamma\\[8pt]
&<&\mu(f_i)+\nu(g_{k_0-i})-k_0\gamma+k_0\gamma=\mu(f_ig_{k_0-i})=\mu(c_{i(k_0-i)}).
\end{array}
\end{displaymath}
On the other hand, for any $i\leq k_0-1$ we have
\begin{displaymath}
\begin{array}{rcl}
\mu(a_{i(k_0-i-1)})&>&\mu(c_{i(k_0-i-1)})-\gamma=\mu(f_ig_{k_0-i-1})-\gamma\\[8pt]
&=& \mu(f_i)+\mu(g_{k_0-i-1})-\gamma\\[8pt]
&\geq& \mu'(f)+\mu'(g)-(k_0-1)\gamma-\gamma\\[8pt]
&=&\mu'(f)+\mu'(g)-k_0\gamma=\mu(c_{i_0j_0})+k_0\gamma-k_0\gamma\\[8pt]
&=&\mu(c_{i_0j_0}).
\end{array}
\end{displaymath}

Let $fg=a_0+a_1q+\ldots+a_lq^l$ be the $q$-standard expansion of $fg$. Then
\[
a_{k_0}=\sum_{i=0}^{k_0}c_{i(k_0-i)}+\sum_{i=0}^{k_0-1}a_{i(k_0-i-1)}.
\]
Since $\mu(c_{i_0j_0})<\mu(a_{i(k_0-i-1)})$ for $i\leq k_0-1$ and $\mu(c_{i_0j_0})<\mu(c_{i(k_0-i)})$ for $i\neq i_0$ we have
\[
\mu(a_{k_0})=\mu(c_{i_0j_0})=\mu(f_{i_0})+\mu(g_{j_0})=\mu'(f)+\mu'(g)-k_0\gamma.
\]
Here we are using that since $\mu$ satisfies \textbf{(V2)} (by the previous lemma), if $\mu(F)<\mu(G)$ for $F,G\in K[x]$, then $\mu(F+G)=\mu(F)$.

Therefore,
\[
\mu'(fg)=\min_{0\leq k\leq l}\{\mu(a_k)+k\gamma)\}\leq \mu'(f)+\mu'(g),
\]
which completes the proof.
\end{proof}
Given a valuation $\nu_0$ on $K$, a group $\Gamma'$ containing $\nu_0 K$ and $\gamma\in \Gamma'$ we define
\[
\nu_\gamma(a_0+a_1x+\ldots+a_nx^n):=\min_{0\leq i\leq n}\{\nu_0(a_i)+i\gamma\}.
\]
\begin{Cor}\label{Thmmonval}
The map $\nu_\gamma$ is a valuation on $K[x]$.
\end{Cor}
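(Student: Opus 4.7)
The plan is to obtain this corollary as a direct instance of Theorem \ref{lemasobreooutrocoiso}, taking $n=1$, $q=x$, $S=K$ and $\mu=\nu_0$. Under this choice we have $K[x]_n = K[x]_1 = K = S$, so $K[x]_n \subseteq S$ and $S$ is closed under multiplication; moreover, the $q$-expansion of a polynomial $f = a_0 + a_1 x + \cdots + a_n x^n$ is precisely the expansion appearing in the definition of $\nu_\gamma$, so $\mu' = \nu_\gamma$.

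Next I would verify the three hypotheses of Theorem \ref{lemasobreooutrocoiso}. Since $\nu_0$ is a valuation on the field $K$, it satisfies \textbf{(V1)} and \textbf{(V2)} on $K$; in particular hypothesis \textbf{(V2)} for $\mu$ and hypothesis \textbf{(i)} hold immediately. The crucial observation is that hypothesis \textbf{(ii)} becomes vacuous: given $\overline f,\overline g \in K[x]_1 = K$, the product $\overline f \overline g$ lies in $K$, hence its $q$-standard expansion $\overline f\overline g = ax + c$ forces $a=0$ and $c=\overline f\overline g$. Then the required inequality reads
\[
\mu(c) = \mu(\overline f\overline g) < \mu(0) + \gamma = \infty,
\]
which holds unless $\overline f\overline g = 0$; but if $\overline f\overline g=0$, the statement of \textbf{(ii)} is empty since both sides equal $\infty$. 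So all the hypotheses of Theorem \ref{lemasobreooutrocoiso} are met, giving \textbf{(V1)} and \textbf{(V2)} for $\nu_\gamma$.

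Finally, I would dispatch condition \textbf{(V3)} by hand: from the definition $\nu_\gamma(1)=\nu_0(1)=0$ and $\nu_\gamma(0)=\infty$. This finishes the proof that $\nu_\gamma$ is a valuation on $K[x]$. There is no serious obstacle here; the only point worth noting is the triviality of \textbf{(ii)} in degree one, which explains why the monomial valuation is the base case from which the more elaborate constructions of MacLane--Vaqui\'e-type valuations are built.
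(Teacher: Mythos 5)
Your proof is correct and essentially the same as the paper's: both apply Theorem \ref{lemasobreooutrocoiso} with $q=x$, $n=1$, observe that $K[x]_1=K$ makes hypothesis \textbf{(i)} automatic since $\nu_0$ is a valuation on $K$, and that hypothesis \textbf{(ii)} is vacuous because the $x$-expansion of $\overline f\overline g\in K$ has zero $x$-coefficient. Your explicit choice $S=K$, $\mu=\nu_0$ is in fact a slightly cleaner reading than the paper's shorthand ``$\mu=\mu'=\nu_\gamma$'', and your remark disposing of the case $\overline f\overline g=0$ is a harmless extra precaution.
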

\begin{proof}
The property \textbf{(V3)} follows from the definition of $\nu_\gamma$ and property \textbf{(V2)} is a consequence of Lemma \ref{Lemasobreextsvelidesitra}. In order to prove \textbf{(V1)} we will use Theorem \ref{lemasobreooutrocoiso} for $\mu=\mu'=\nu_\gamma$. Since $\deg(x)=1$ and $K[x]_1=K$ the condition \textbf{(i)} of Theorem \ref{lemasobreooutrocoiso} is satisfied (because $K[x]_1=K$ is closed by multiplication and $\nu\mid_K=\nu_0$ is a valuation). Also, for $f,g\in K$ if $fg=ax+r$, then $a=0$ and $r=fg$ because $x$ is transcendental over $K$. Hence
\[
\nu(fg)=\nu(r)<\infty=\nu(aQ)
\]
and therefore the condition \textbf{(ii)} of Theorem \ref{lemasobreooutrocoiso} is satisfied. Hence, \textbf{(V1)} is satisfied for $\mu'=\nu_\gamma$ and consequently $\nu_\gamma$ is a valuation.
\end{proof}
\begin{Def}
The valuations of $K[x]$ that are constructed as the previous theorem are called \textbf{monomial valuations} and are denoted by
\[
\nu_\gamma:=[\nu_0;\nu_\gamma(x)=\gamma].
\]
\end{Def}

Many results follow as corollaries of Theorem \ref{lemasobreooutrocoiso}. Examples of this are Proposition \ref{proptruncakeypolval}, Theorem \ref{Thmaugmentedval} and Theorem \ref{Theoremsobreexntvalaugvaeui} in this paper.

\begin{Obs}
The main argument used to prove Theorem 3 of \cite{Kap} follows from Theorem \ref{lemasobreooutrocoiso}. This result refers to \textit{pseudo-convergent sequences} and \textit{immediate extension}. Since this topic is not necessary in this paper, we will not present it here.
\end{Obs}
\section{Key polynomials}\label{keypolynomials}
In order to define a key polynomial, we will need to define the number $\epsilon(f)$ for $f\in K[x]$. Let $\Gamma'=\Gamma\otimes \Q$ be the divisible hull of $\Gamma$. For a polynomial $f\in K[x]$ and $k\in\N$, we consider
\[
\partial_k(f):=\frac{1}{k!}\frac{d^kf}{dx^k},
\]
the so called Hasse-derivative of $f$ of order $k$. Let
\[
\epsilon(f)=\max_{k\in \N}\left\{\frac{\nu(f)-\nu(\partial_kf)}{k}\right\}\in \Gamma'.
\]
\begin{Def}
A monic polynomial $Q\in K[x]$ is said to be a \textbf{key polynomial} (of level $\epsilon (Q)$) if for every $f\in K[x]$ if $\epsilon(f)\geq \epsilon(Q)$, then $\deg(f)\geq\deg(Q)$.
\end{Def}

\begin{Lema}[Lemma 2.3 of \cite{SopivNova}]
Let $Q$ be a key polynomial and take $f,g\in K[x]$ such that
\[
\deg(f)<\deg(Q)\mbox{ and }\deg(g)<\deg(Q).
\]
Then for $\epsilon:=\epsilon(Q)$ and any $k\in\N$ we have the following:
\begin{description}\label{lemaonkeypollder}
\item[(i)] $\nu(\partial_k(fg))>\nu(fg)-k\epsilon$
\item[(ii)] If $\nu_Q(fQ+g)<\nu(fQ+g)$ and $k\in I(Q):=\left\{i\mid \epsilon(f)=\frac{\nu(f)-\nu(\partial_if)}{i}\right\}$, then $\nu(\partial_k(fQ+g))=\nu(fQ)-k\epsilon$;
\item[(iii)] If $h_1,\ldots,h_s$ are polynomials such that $\deg(h_i)<\deg(Q)$ for every $i=1,\ldots, s$ and
$\displaystyle\prod_{i=1}^sh_i=qQ+r$ with $\deg(r)<\deg(Q)$ and $r\neq 0$, then
\[
\nu(r)=\nu\left(\prod_{i=1}^sh_i\right)<\nu(qQ).
\]
\end{description}
\end{Lema}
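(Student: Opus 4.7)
I plan to prove the three parts in order, with (i) and (ii) both resting on the Leibniz rule for Hasse derivatives, $\partial_k(FG) = \sum_{i+j=k}\partial_i(F)\partial_j(G)$, together with the key observation that if $\deg(h) < \deg(Q)$, then the defining property of the key polynomial $Q$ forces $\epsilon(h) < \epsilon$ strictly. This in turn gives $\nu(\partial_i h) \geq \nu(h) - i\epsilon(h) > \nu(h) - i\epsilon$ for every $i \geq 1$, where the first inequality is immediate from the definition of $\epsilon(h)$.

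For (i), expand $\partial_k(fg)$ via Leibniz for $k \geq 1$. Each summand $\partial_i(f)\partial_j(g)$ has $i + j = k \geq 1$, so at least one index is positive; say $i \geq 1$. Then $\nu(\partial_i f) > \nu(f) - i\epsilon$, while $\nu(\partial_j g) \geq \nu(g) - j\epsilon$ (trivially if $j = 0$, by the same bound applied to $g$ otherwise). Adding valuations gives $\nu(\partial_i f \cdot \partial_j g) > \nu(fg) - k\epsilon$ for every summand, and \textbf{(V2)} yields the claim.

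For (ii), expand $\partial_k(fQ+g) = \partial_k(g) + \sum_{i+j=k}\partial_i(f)\partial_j(Q)$. The summand $f\partial_k(Q)$ (with $i = 0$) has valuation $\nu(f) + \nu(Q) - k\epsilon = \nu(fQ) - k\epsilon$ precisely when $k \in I(Q)$. Every cross term with $i \geq 1$ has valuation strictly greater than $\nu(fQ) - k\epsilon$, exactly as in (i). Finally, $\nu_Q(fQ+g) < \nu(fQ+g)$ forces cancellation in the $Q$-expansion, hence $\nu(g) = \nu(fQ)$, so $\nu(\partial_k g) > \nu(g) - k\epsilon = \nu(fQ) - k\epsilon$. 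Thus $f\partial_k(Q)$ is the unique strictly smallest summand, and the ultrametric inequality gives $\nu(\partial_k(fQ+g)) = \nu(fQ) - k\epsilon$.

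For (iii), I would induct on $s$. The case $s = 1$ is trivial (degree forces $q = 0$), and the inductive step reduces to the two-factor case: write $h_1 \cdots h_{s-1} = q'Q + r'$ with $r' \neq 0$ (else $r$ would vanish), apply the induction hypothesis to obtain $\nu(r') = \nu(h_1 \cdots h_{s-1}) < \nu(q'Q)$, and apply the two-factor case to $r' h_s = aQ + b$ to read off $q = q'h_s + a$, $r = b$ and compare valuations. For $s = 2$, set $P = h_1 h_2 = qQ + r$; since $\deg(P) < 2\deg(Q)$ we have $\deg(q) < \deg(Q)$, and (i) yields $\epsilon(P) < \epsilon$, i.e.\ $\nu(\partial_k P) > \nu(P) - k\epsilon$ for every $k \geq 1$. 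If $\nu_Q(P) < \nu(P)$, then $\nu(r) = \nu(qQ)$, and part (ii) applied with $f = q, g = r$ for $k \in I(Q)$ gives $\nu(\partial_k P) = \nu(qQ) - k\epsilon$; combined with the strict bound from (i) this forces $\nu(qQ) > \nu(P) > \nu_Q(P) = \nu(qQ)$, a contradiction. Otherwise $\nu_Q(P) = \nu(P)$. If moreover $\nu(r) \geq \nu(qQ)$, a direct Leibniz computation of $\partial_k P = \partial_k(qQ) + \partial_k(r)$ singles out $q\partial_k(Q)$ as the uniquely smallest summand for $k \in I(Q)$ (using $\epsilon(q), \epsilon(r) < \epsilon$ and $\nu(r) \geq \nu(qQ) = \nu(P)$), giving $\nu(\partial_k P) = \nu(qQ) - k\epsilon = \nu(P) - k\epsilon$, again contradicting $\epsilon(P) < \epsilon$. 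The only surviving possibility is $\nu(r) < \nu(qQ)$, whence $\nu(P) = \nu(r) < \nu(qQ)$ as required. The main obstacle is this two-factor case: one must combine the strict bound from (i) with the sharp equality arising from $k \in I(Q)$, and verify in each of the two alternatives that $q\partial_k(Q)$ is the unique strict minimum of the relevant Leibniz sum.
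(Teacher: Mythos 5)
The paper states this lemma by citation to \cite{SopivNova} and gives no proof of its own, so there is nothing in-paper to compare against; I can only assess the argument on its merits. Your proof is correct, and it uses what is in fact the standard route: the Leibniz formula $\partial_k(FG)=\sum_{i+j=k}\partial_i(F)\partial_j(G)$ combined with the observation that the key-polynomial property forces $\epsilon(h)<\epsilon(Q)$ whenever $\deg(h)<\deg(Q)$, which in turn gives the strict bound $\nu(\partial_i h)>\nu(h)-i\epsilon(Q)$ for $i\geq 1$. In \textbf{(ii)} your identification of $f\partial_k(Q)$ as the unique minimizing term (using $\nu(g)=\nu(fQ)$, which follows from the hypothesis $\nu_Q(fQ+g)<\nu(fQ+g)$) is exactly the right computation, and in \textbf{(iii)} the reduction of the inductive step to the two-factor case, followed by the case split on $\nu_Q(P)$ versus $\nu(P)$ and on $\nu(r)$ versus $\nu(qQ)$, with \textbf{(i)} and \textbf{(ii)} producing a contradiction in the two bad cases, is a clean and valid way to finish. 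Two small points you should make explicit but which do not affect correctness: in the two-factor case you should dispose of $q=0$ separately (where $\nu(qQ)=\infty$ and the claim is trivial), and in \textbf{(i)} the statement is vacuous for $k=0$, so $\N$ here means the positive integers (consistent with the definition of $\epsilon$).
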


\begin{Prop}[Proposition 2.4 \textbf{(ii)} of \cite{SopivNova}]
Every key polynomial is irreducible.
\end{Prop}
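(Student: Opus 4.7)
The plan is to argue by contradiction. Suppose $Q$ is a key polynomial that factors as $Q=fg$ with $f,g\in K[x]$ and $\deg(f),\deg(g)<\deg(Q)$. The idea is to show that this factorization forces $\epsilon(Q)$ to be strictly larger than every term $(\nu(Q)-\nu(\partial_k Q))/k$ that enters its definition, which is impossible since $\epsilon(Q)$ is itself the maximum of those terms.

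Concretely, since $\deg(f)<\deg(Q)$ and $\deg(g)<\deg(Q)$, part \textbf{(i)} of Lemma \ref{lemaonkeypollder} applies to the pair $(f,g)$ with $\epsilon=\epsilon(Q)$. It yields, for every $k\geq 1$,
\[
\nu(\partial_k Q)=\nu(\partial_k(fg))>\nu(fg)-k\epsilon(Q)=\nu(Q)-k\epsilon(Q),
\]
so that
\[
\frac{\nu(Q)-\nu(\partial_k Q)}{k}<\epsilon(Q).
\]
The contradiction comes from the defining formula $\epsilon(Q)=\max_{k\in\N}\{(\nu(Q)-\nu(\partial_k Q))/k\}$: the maximum is attained (since $\partial_k Q=0$ for $k>\deg(Q)$, only finitely many values of $k$ are relevant), so there exists $k_0$ with $1\leq k_0\leq\deg(Q)$ achieving equality, contradicting the strict inequality above at $k=k_0$.

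The only subtlety is to apply Lemma \ref{lemaonkeypollder}\textbf{(i)} to $Q$ itself by exhibiting a factorization with both factors of degree strictly less than $\deg(Q)$; this is precisely what reducibility of the monic polynomial $Q$ provides (after, if desired, absorbing the leading coefficient into one of the factors to make both monic, though monicity is not needed for the lemma). I do not foresee any real obstacle: the whole argument is a one-line application of the derivative estimate in Lemma \ref{lemaonkeypollder}\textbf{(i)} combined with the maximality in the definition of $\epsilon(Q)$.
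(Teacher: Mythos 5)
Your proof is correct and complete. The paper itself does not give its own proof of this proposition --- it is cited from \cite{SopivNova} --- but your argument (factor $Q=fg$ with $\deg f,\deg g<\deg Q$, invoke Lemma \ref{lemaonkeypollder}\textbf{(i)} to obtain $\nu(\partial_k Q)>\nu(Q)-k\epsilon(Q)$ for every $k\geq 1$, and contradict the fact that $\epsilon(Q)$ is an attained maximum over the finitely many relevant indices $1\leq k\leq\deg Q$) is exactly the standard derivation from the tools already stated in the paper, and is essentially the argument in \cite{SopivNova}.
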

The next result is Proposition 2.6 of \cite{SopivNova} (which is the same as Proposition 15 of \cite{Spivmahandjul}). We present its proof here because it follows easily from Theorem \ref{lemasobreooutrocoiso}.
\begin{Prop}\label{proptruncakeypolval}
If $Q$ is a key polynomial, then $\nu_Q$ is a valuation of $K[x]$.
\end{Prop}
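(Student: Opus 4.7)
The plan is to apply Theorem \ref{lemasobreooutrocoiso} directly, with the choices $S=K[x]$, $\mu=\nu$ (the original valuation), $q=Q$ and $\gamma=\nu(Q)$. Once I identify the map $\mu'$ produced by the construction with the truncation $\nu_Q$, what remains is to check that the three hypotheses of the theorem hold and that $\nu_Q$ also satisfies \textbf{(V3)}.

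First I would observe that since $\nu$ is a valuation on $K[x]$ it automatically satisfies \textbf{(V2)}; so Lemma \ref{Lemasobreextsvelidesitra} already gives \textbf{(V2)} for $\mu'=\nu_Q$. Next, for any $\overline f,\overline g\in K[x]_n$ (with $n=\deg Q$), condition \textbf{(i)} of Theorem \ref{lemasobreooutrocoiso} is nothing but the multiplicativity of $\nu$ on the polynomial $\overline f\overline g$, which holds because $\nu$ is a valuation.

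The one substantive step, and the place where the hypothesis that $Q$ is a key polynomial is actually used, is the verification of condition \textbf{(ii)}. Given $\overline f\overline g=aQ+c$ with $c\in K[x]_n$, I need
\[
\nu(c)=\nu(\overline f\overline g)<\nu(a)+\nu(Q).
\]
This is exactly Lemma \ref{lemaonkeypollder}\textbf{(iii)} applied with $s=2$, $h_1=\overline f$ and $h_2=\overline g$ (after noting that the case $c=0$ cannot occur, since $Q$ is irreducible by the preceding proposition and $\deg(\overline f),\deg(\overline g)<\deg(Q)$ forces $\overline f\overline g\neq 0$ to be coprime to $Q$, so the remainder $c$ is nonzero). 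This is the main obstacle, but it is delegated to the already-quoted lemma.

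With these hypotheses verified, Theorem \ref{lemasobreooutrocoiso} yields that $\mu'=\nu_Q$ satisfies both \textbf{(V1)} and \textbf{(V2)}. Finally, property \textbf{(V3)} is immediate from the definition of $\nu_Q$: the $Q$-expansion of $1$ is $1$ itself, so $\nu_Q(1)=\nu(1)=0$, and the $Q$-expansion of $0$ is trivially $0$, giving $\nu_Q(0)=\infty$. Hence $\nu_Q$ is a valuation on $K[x]$, completing the proof.
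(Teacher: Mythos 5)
Your proposal is correct and follows essentially the same route as the paper: it delegates \textbf{(V2)} to Lemma \ref{Lemasobreextsvelidesitra}, verifies conditions \textbf{(i)} and \textbf{(ii)} of Theorem \ref{lemasobreooutrocoiso} using, respectively, multiplicativity of $\nu$ and Lemma \ref{lemaonkeypollder}\textbf{(iii)} together with irreducibility of $Q$ to rule out $c=0$, and checks \textbf{(V3)} directly from the definition of $\nu_Q$.
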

\begin{proof}
The property \textbf{(V3)} follows from the definition of $\nu_Q$ and property \textbf{(V2)} is a consequence of Lemma \ref{Lemasobreextsvelidesitra}. In order to prove property \textbf{(V1)}, we will use Lemma \ref{lemasobreooutrocoiso} for $\mu=\nu$ and $\mu'=\nu_Q$. The conditions \textbf{(i)} of Lemma \ref{lemasobreooutrocoiso} is satisfied because $\nu$ is a valuation. We observe that since $Q$ is irreducible if $fg=aQ+r$, then $r\neq 0$. Hence, the condition \textbf{(ii)} of Lemma \ref{lemasobreooutrocoiso} follows from Lemma \ref{lemaonkeypollder} \textbf{(iii)}.
\end{proof}

\begin{Prop}[Proposition 2.10 of \cite{SopivNova}]\label{Propcompkeypol}
For two key polynomials $Q,Q'\in K[x]$ we have the following:
\begin{description}
\item[(i)] If $\deg(Q)<\deg(Q')$, then $\epsilon(Q)<\epsilon(Q')$;
\item[(ii)] If $\epsilon(Q)<\epsilon(Q')$, then $\nu_Q(Q')<\nu(Q')$;
\item[(iii)] If $\deg(Q)=\deg(Q')$, then
\begin{equation}\label{eqwhdegsame}
\nu(Q)<\nu(Q')\Llr \nu_Q(Q')<\nu(Q')\Llr \epsilon(Q)<\epsilon(Q').
\end{equation}
\end{description}
\end{Prop}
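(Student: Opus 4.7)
Part (i) is immediate from the defining property of $Q'$ as a key polynomial: if $\epsilon(Q)\geq \epsilon(Q')$, then $\deg(Q)\geq \deg(Q')$, contradicting the hypothesis. Once (iii) is established, part (ii) follows by a case analysis on degrees: $\deg(Q)>\deg(Q')$ is ruled out by (i) applied with $Q,Q'$ swapped (it would force $\epsilon(Q)>\epsilon(Q')$), $\deg(Q)=\deg(Q')$ is one of the implications in (iii), and only $\deg(Q)<\deg(Q')$ needs separate treatment.

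For (iii), set $n=\deg(Q)=\deg(Q')$ and write $Q'=Q+h$ with $\deg(h)<n$, so that $\nu_Q(Q')=\min\{\nu(h),\nu(Q)\}$. A short case analysis on $\nu(h)$ versus $\nu(Q)$ shows that $\nu_Q(Q')<\nu(Q')$ occurs precisely when $\nu(h)=\nu(Q)<\nu(Q')$, which is exactly $\nu(Q)<\nu(Q')$; this settles the first equivalence. For the equivalence with $\epsilon$, the crucial input is that $\deg(h)<n$ combined with the key-polynomial property of both $Q$ and $Q'$ forces $\epsilon(h)<\min\{\epsilon(Q),\epsilon(Q')\}$, hence $\nu(\partial_k h)>\nu(h)-k\epsilon(Q)$ and $\nu(\partial_k h)>\nu(h)-k\epsilon(Q')$ for every $k\geq 1$. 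Forward direction: if $\nu(Q)<\nu(Q')$ then $\nu(h)=\nu(Q)$; at a $k$ realising $\epsilon(Q)$ we obtain $\nu(\partial_k h)>\nu(\partial_k Q)$, so $\nu(\partial_k Q')=\nu(\partial_k Q)$ and $(\nu(Q')-\nu(\partial_k Q'))/k>\epsilon(Q)$, yielding $\epsilon(Q')>\epsilon(Q)$. Converse by contrapositive: if $\nu(Q)\geq \nu(Q')$ then $\nu(h)\geq \nu(Q')$, and the symmetric argument at a $k$ realising $\epsilon(Q')$ gives $\nu(\partial_k Q)=\nu(\partial_k Q')$ and therefore $\epsilon(Q)\geq \epsilon(Q')$.

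For (ii) in the remaining case $\deg(Q)<\deg(Q')$, I argue contrapositively: assuming $\nu_Q(Q')=\nu(Q')$, I derive $\epsilon(Q')\leq \epsilon(Q)$. The key technical estimate is $\nu(\partial_k f)\geq \nu_Q(f)-k\epsilon(Q)$ for every $f\in K[x]$ and $k\geq 1$. It is proved by expanding $f=\sum f_iQ^i$ in its $Q$-expansion, applying the Leibniz rule for Hasse derivatives, and combining the bound $\nu(\partial_j f_i)>\nu(f_i)-j\epsilon(Q)$ (which holds because $\deg(f_i)<n$ forces $\epsilon(f_i)<\epsilon(Q)$) with the iterated bound $\nu(\partial_l(Q^i))\geq i\nu(Q)-l\epsilon(Q)$ (obtained from $\nu(\partial_j Q)\geq \nu(Q)-j\epsilon(Q)$ applied in each factor of the Leibniz expansion of $Q^i$). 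Specialising to $f=Q'$ and a $k$ realising $\epsilon(Q')$ yields $\nu(Q')-k\epsilon(Q')=\nu(\partial_k Q')\geq \nu_Q(Q')-k\epsilon(Q)=\nu(Q')-k\epsilon(Q)$, whence $\epsilon(Q')\leq \epsilon(Q)$. The main obstacle is cleanly setting up this Leibniz-style estimate on the $Q$-expansion; the remainder is routine bookkeeping.
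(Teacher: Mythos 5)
Your proof is correct in all three parts. The paper itself cites this proposition from Novacoski--Spivakovsky without reproducing the argument, so there is no in-paper proof to compare against; on its own merits, though, your argument checks out. Part (i) is indeed immediate from the definition of key polynomial, and your case analysis for (iii) (via $h=Q'-Q$, the identity $\nu_Q(Q')=\min\{\nu(h),\nu(Q)\}$, and the observation that $\epsilon(h)<\min\{\epsilon(Q),\epsilon(Q')\}$ since $\deg h<\deg Q=\deg Q'$) is clean and complete. The central tool for (ii) in the remaining case $\deg Q<\deg Q'$---the estimate $\nu(\partial_k f)\geq\nu_Q(f)-k\epsilon(Q)$, obtained from the $Q$-expansion by Leibniz together with the bounds $\nu(\partial_j f_i)\geq\nu(f_i)-j\epsilon(Q)$ and $\nu(\partial_l Q^i)\geq i\nu(Q)-l\epsilon(Q)$---is exactly the standard workhorse inequality of the abstract key-polynomial theory, and specialising it at a $k$ realising $\epsilon(Q')$ closes the contrapositive as you describe. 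One cosmetic remark: in part (iii) you implicitly assume $h\neq 0$ when you invoke $\epsilon(h)$; the degenerate case $Q=Q'$ makes all three conditions vacuously false, so this is harmless, but worth flagging if you write it up.
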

\begin{Cor}\label{corsobreequaldepsoidkey}
Let $Q$ and $Q'$ be key polynomials such that $\epsilon(Q)\leq\epsilon(Q')$. For every $f\in K[x]$, if $\nu_Q(f)=\nu(f)$, then $\nu_{Q'}(f)=\nu(f)$.
\end{Cor}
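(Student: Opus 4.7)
The plan is to prove the stronger global comparison $\nu_Q \le \nu_{Q'}$ on all of $K[x]$. Once this is in hand, the hypothesis $\nu_Q(f)=\nu(f)$ combined with the universal truncation inequality $\nu_{Q'}(f)\le\nu(f)$ gives $\nu(f)=\nu_Q(f)\le\nu_{Q'}(f)\le\nu(f)$, forcing $\nu_{Q'}(f)=\nu(f)$.

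Two preparatory observations set up the argument. First, the contrapositive of Proposition~\ref{Propcompkeypol}~\textbf{(i)} applied to the pair $(Q',Q)$ rules out $\deg(Q')<\deg(Q)$, so $\deg(Q)\le\deg(Q')$. Second, I would verify that $\nu_{Q'}(Q)=\nu(Q)$. When $\deg(Q)<\deg(Q')$ this is immediate: $Q\in K[x]_{\deg(Q')}$ coincides with its own $Q'$-expansion. When $\deg(Q)=\deg(Q')$, Proposition~\ref{Propcompkeypol}~\textbf{(iii)} (with the roles of $Q$ and $Q'$ exchanged) is used: the hypothesis $\epsilon(Q)\le\epsilon(Q')$ rules out $\epsilon(Q')<\epsilon(Q)$ and hence rules out $\nu_{Q'}(Q)<\nu(Q)$; combined with the standing inequality $\nu_{Q'}(Q)\le\nu(Q)$ (which follows from applying $\nu$ to the $Q'$-expansion $Q=1\cdot Q'+(Q-Q')$ and using that $\nu$ is a valuation) this forces the equality.

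For any $g\in K[x]$, take the $Q$-expansion $g=g_0+g_1Q+\ldots+g_rQ^r$. Each coefficient satisfies $\deg(g_i)<\deg(Q)\le\deg(Q')$, so $\nu_{Q'}(g_i)=\nu(g_i)$ because the $Q'$-expansion of $g_i$ reduces to $g_i$ itself. Since $\nu_{Q'}$ is a valuation by Proposition~\ref{proptruncakeypolval},
\[
\nu_{Q'}(g)\ge\min_i\bigl\{\nu_{Q'}(g_i)+i\,\nu_{Q'}(Q)\bigr\}=\min_i\bigl\{\nu(g_i)+i\,\nu(Q)\bigr\}=\nu_Q(g),
\]
which establishes $\nu_Q\le\nu_{Q'}$ and completes the proof. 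The only genuinely subtle point is the equality $\nu_{Q'}(Q)=\nu(Q)$ in the equal-degree case, where Proposition~\ref{Propcompkeypol}~\textbf{(iii)} is essential; everything else is a single application of the valuation property of $\nu_{Q'}$ to the $Q$-expansion of $g$.
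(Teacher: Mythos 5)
Your proof is correct and takes essentially the same route as the paper: both establish $\nu_{Q'}(Q)=\nu(Q)$ via Proposition~\ref{Propcompkeypol}, both observe that the coefficients of the $Q$-expansion have degree below $\deg(Q')$ and hence are fixed by $\nu_{Q'}$, and both then apply the valuation property of $\nu_{Q'}$ to the $Q$-expansion and finish with $\nu_{Q'}(f)\le\nu(f)$. You merely phrase the intermediate conclusion as the global inequality $\nu_Q\le\nu_{Q'}$ (which the paper's inequality chain also yields) and spell out the two-case verification of $\nu_{Q'}(Q)=\nu(Q)$ that the paper leaves implicit.
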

\begin{proof}
It follows from Proposition \ref{Propcompkeypol} that if $\epsilon(Q)\leq\epsilon(Q')$, then $\nu_{Q'}(Q)=\nu(Q)$. Since $\deg(Q)\leq \deg(Q')$, for every $f_i\in K[x]$ with $\deg(f_i)<\deg(Q)$ we have $\nu_{Q'}(f_i)=\nu(f_i)$. Hence $\nu_{Q'}(f_iQ^i)=\nu(f_iQ^i)$.

Take $f\in K[x]$ such that $\nu_Q(f)=\nu(f)$ and let
\[
f=f_0+f_1 Q+\ldots+f_nQ^n
\]
be the $Q$-expansion of $f$. Then
\[
\nu_{Q'}(f)\geq\min_{0\leq i\leq n}\{\nu_{Q'}(f_iQ^i)\}=\min_{0\leq i\leq n}\{\nu(f_iQ^i)\}=\nu_Q(f)=\nu(f).
\]
Since $\nu_{Q'}(f)\leq \nu(f)$ for every $f\in K[x]$ we have our result.
\end{proof}
For a key polynomial $Q\in K[x]$, let
\[
\alpha(Q):=\min\{\deg(f)\mid \nu_Q(f)< \nu(f)\}
\]
(if $\nu_Q=\nu$, then set $\alpha(Q)=\infty$) and
\[
\Psi(Q):=\{f\in K[x]\mid f\mbox{ is monic},\nu_Q(f)< \nu(f)\mbox{ and }\alpha(Q)=\deg (f)\}.
\]

\begin{Teo}[Theorem 2.12 of \cite{SopivNova}]\label{definofkeypol}
A monic polynomial $Q$ is a key polynomial if and only if there exists a key polynomial $Q_-\in K[x]$ such that $Q\in \Psi(Q_-)$ or the following conditions hold:
\begin{description}
\item[(K1)] $\alpha(Q_-)=\deg (Q_-)$
\item[(K2)] the set $\{\nu(Q')\mid Q'\in\Psi(Q_-)\}$ does not contain a maximal element
\item[(K3)] $\nu_{Q'}(Q)<\nu(Q)$ for every $Q'\in \Psi(Q_-)$
\item[(K4)] $Q$ has the smallest degree among polynomials satisfying \textbf{(K3)}.
\end{description}
\end{Teo}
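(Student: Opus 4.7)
The plan is to prove the two directions of the equivalence separately. The definition of key polynomial is phrased in terms of the invariant $\epsilon$, while the proposed characterization uses the sets $\Psi(Q_-)$, which are defined via truncated valuations. The bridge between these two languages is Proposition \ref{Propcompkeypol} (in particular the equivalence \eqref{eqwhdegsame}) together with Corollary \ref{corsobreequaldepsoidkey}, and I would rely on them throughout.

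For the $(\Leftarrow)$ direction, fix a key polynomial $Q_-$ and take an arbitrary $f$ with $\deg(f)<\deg(Q)$; the goal is to deduce $\epsilon(f)<\epsilon(Q)$, which is exactly the key polynomial property for $Q$. In the subcase $Q\in\Psi(Q_-)$, the inequality $\deg(f)<\alpha(Q_-)$ gives $\nu_{Q_-}(f)=\nu(f)$ by definition of $\alpha$; Lemma \ref{lemaonkeypollder} propagates this equality to the Hasse derivatives of $f$, so $\epsilon(f)$ can be read off the truncated valuation and is bounded by $\epsilon(Q_-)$, while $\nu_{Q_-}(Q)<\nu(Q)$ forces $\epsilon(Q)>\epsilon(Q_-)$ via Proposition \ref{Propcompkeypol}. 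In the subcase where (K1)--(K4) hold, condition (K4) forces $f$ to fail (K3), so some $Q'\in\Psi(Q_-)$ satisfies $\nu_{Q'}(f)=\nu(f)$, and the previous subcase shows that this $Q'$ is itself a key polynomial. By (K2) one obtains a sequence $\{Q'_n\}\subseteq\Psi(Q_-)$ with $\nu(Q'_n)$ (hence, by Proposition \ref{Propcompkeypol}(iii), $\epsilon(Q'_n)$) unbounded; Corollary \ref{corsobreequaldepsoidkey} propagates $\nu_{Q'_n}(f)=\nu(f)$ for every $n$, so $\epsilon(f)$ lies strictly below $\epsilon(Q'_n)$ for $n$ large. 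Combining this with (K3), which places $\nu_{Q'_n}(Q)<\nu(Q)$ for every $n$, yields $\epsilon(f)<\epsilon(Q)$.

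For the $(\Rightarrow)$ direction, assume $Q$ is a key polynomial and set $\mathcal{K}^{-}=\{P\mid P\text{ is a key polynomial with }\epsilon(P)<\epsilon(Q)\}$; by Proposition \ref{Propcompkeypol}(i) this contains every key polynomial of degree $<\deg(Q)$. If $\mathcal{K}^{-}$ has an element $Q_-$ of maximal $\epsilon$-value, I would verify $Q\in\Psi(Q_-)$: the strict inequality $\epsilon(Q_-)<\epsilon(Q)$ together with Proposition \ref{Propcompkeypol} gives $\nu_{Q_-}(Q)<\nu(Q)$, while the minimal-degree property of $Q$ combined with the maximality of $\epsilon(Q_-)$ forces $\alpha(Q_-)=\deg(Q)$. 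Otherwise $\mathcal{K}^{-}$ has no maximum, and one should take $Q_-$ to be a key polynomial whose family $\Psi(Q_-)$ is cofinal in $\mathcal{K}^{-}$ (with $\deg(Q_-)$ chosen minimal subject to this cofinality); then (K1) is forced by this minimality, (K2) reflects the absence of a maximum in $\mathcal{K}^{-}$, (K3) follows because every $Q'\in\Psi(Q_-)$ lies in $\mathcal{K}^{-}$ so Proposition \ref{Propcompkeypol} gives $\nu_{Q'}(Q)<\nu(Q)$, and (K4) is a direct reformulation of the key polynomial property of $Q$.

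The main obstacle is the limit case of the $(\Rightarrow)$ direction: identifying the correct $Q_-$ when $\mathcal{K}^{-}$ has no maximum, and verifying (K1), namely $\alpha(Q_-)=\deg(Q_-)$. This requires a careful analysis of how the families $\Psi(P)$ for varying $P\in\mathcal{K}^{-}$ nest and exhaust $\mathcal{K}^{-}$, using the absence of a maximum $\epsilon$-value to promote the agreement of $\nu_{Q_-}$ and $\nu$ on all polynomials of degree $<\deg(Q_-)$. It is here that most of the technical content of the proof lives.
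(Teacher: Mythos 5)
The paper does not prove this theorem: it is stated and attributed to Theorem 2.12 of \cite{SopivNova}, and no proof is offered here. There is therefore nothing in this manuscript to compare your argument against; you are supplying a proof where the author deliberately delegates one.

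As a reconstruction, your outline has the right shape (prove the contrapositive of the key-polynomial property for $(\Leftarrow)$, split $(\Rightarrow)$ into ``successor'' and ``limit'' cases according to whether $\{\epsilon(P)\mid P\text{ key polynomial}, \epsilon(P)<\epsilon(Q)\}$ has a maximum), but there is a genuine logical gap that recurs throughout. You repeatedly pass from an inequality of the form $\nu_{Q'}(h)<\nu(h)$ to the conclusion $\epsilon(Q')<\epsilon(h)$ by invoking Proposition \ref{Propcompkeypol} or Corollary \ref{corsobreequaldepsoidkey}. Those results, as stated in this paper, compare \emph{two key polynomials}; in the $(\Leftarrow)$ direction $Q$ is precisely the object you are trying to show is a key polynomial, and in your verification that $\alpha(Q_-)=\deg Q$ you apply the same comparison to an arbitrary $f$. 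What is actually needed is the more general fact (present in \cite{SopivNova}/\cite{Spivmahandjul} but not quoted in this paper) that for a key polynomial $Q'$ and an \emph{arbitrary} polynomial $h$, $\nu_{Q'}(h)<\nu(h)$ if and only if $\epsilon(h)>\epsilon(Q')$. Without explicitly stating and using that bridge lemma, the reductions you perform are circular or unjustified; Lemma \ref{lemaonkeypollder}(i) gives you only the case $\deg h<\deg Q'$, which does not cover, e.g., $\deg Q_-\leq\deg f<\alpha(Q_-)$ in the first subcase.

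The second gap is the one you flag yourself: in the limit case of $(\Rightarrow)$, ``take $Q_-$ whose family $\Psi(Q_-)$ is cofinal, with $\deg(Q_-)$ minimal subject to cofinality'' is not yet an argument. It is not clear that such a $Q_-$ exists, that its family is cofinal rather than merely increasing, or that minimality of degree forces $\alpha(Q_-)=\deg(Q_-)$. Concretely, one must show that among key polynomials $P$ with $\epsilon(P)<\epsilon(Q)$ there is a fixed degree $d$ such that for every $\delta$ one can find such a $P$ of degree $d$ with $\epsilon(P)>\delta$, and then argue $\alpha(P)=d$ for these $P$. This is where the real content of the direction lives, and the sketch does not close it. If you want to finish, you should first isolate and prove the bridge lemma, then use the increasing, well-ordered structure of $\{\epsilon(P)\}$ over key polynomials of bounded degree to identify $Q_-$.
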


\begin{Def}
When conditions \textbf{(K1) - (K4)} of Theorem \ref{definofkeypol} are satisfied, we say that $Q$ is a \textbf{limit key polynomial}.
\end{Def}

\begin{Def}
A set $\textbf{Q}\subseteq K[x]$ is called a \textbf{complete set for $\nu$} if for every $f\in K[x]$ there exists $Q\in \textbf{Q}$ with $\deg(Q)\leq\deg(f)$ such that $\nu_Q(f)=\nu(f)$. If the set \textbf{Q} admits an order under which it is well-ordered, then it is called a complete sequence.
\end{Def}

\begin{Teo}[Theorem 1.1 of \cite{SopivNova}]\label{Theoremexistencecompleteseqkpol}
Every valuation $\nu$ on $K[x]$ admits a complete set $\textbf{Q}$ of key polynomials. Moreover, $\textbf{Q}$ can be chosen to be well-ordered with respect to the order given by $Q<Q'$ if $\epsilon(Q)<\epsilon(Q')$.
\end{Teo}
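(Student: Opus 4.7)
The plan is to construct $\mathbf{Q}$ by transfinite induction, producing a well-ordered chain $(Q_\alpha)_{\alpha<\mu}$ of key polynomials of strictly increasing $\epsilon$-value (hence, by Proposition \ref{Propcompkeypol}\textbf{(i)}, of non-decreasing degree). The engine at each step is Theorem \ref{definofkeypol}, which tells us exactly how a new key polynomial arises from the previous data.

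For the base, I would exhibit a key polynomial of minimal $\epsilon$-value; for instance, take $Q_0=x-a$ with $\nu(x-a)$ chosen maximal among monic linear polynomials and verify the defining property directly, using $\epsilon(x-a)=\nu(x-a)$. At a successor stage, given a key polynomial $Q_\alpha$ with $\nu_{Q_\alpha}\neq\nu$, apply Theorem \ref{definofkeypol} with $Q_-=Q_\alpha$: if $\Psi(Q_\alpha)$ admits a $\nu$-maximum, take $Q_{\alpha+1}$ to be any such element; otherwise \textbf{(K2)} holds and $Q_{\alpha+1}$ is a limit key polynomial of minimal degree satisfying \textbf{(K3)}. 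Proposition \ref{Propcompkeypol} ensures $\epsilon(Q_{\alpha+1})>\epsilon(Q_\alpha)$. At a limit ordinal $\lambda$, invoke the limit-key-polynomial clause of Theorem \ref{definofkeypol} using the cofinal data of $\Psi(Q_\alpha)$ for $\alpha<\lambda$. The construction terminates at some ordinal $\mu$ because the values $\epsilon(Q_\alpha)\in\Gamma\otimes\Q$ are all distinct.

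For completeness, fix $f\in K[x]$ of degree $n$. Since degrees along the chain are non-decreasing, the set $\{Q_\alpha\in\mathbf{Q}\mid\deg(Q_\alpha)\le n\}$ is an initial segment, ending (or having a sup) at some ordinal $\beta\le\mu$. Using Corollary \ref{corsobreequaldepsoidkey} (persistence: once $\nu_{Q_\alpha}(f)=\nu(f)$, this is preserved upward in the chain), it suffices to find $\alpha<\beta$ with $\nu_{Q_\alpha}(f)=\nu(f)$. The cleanest subcase is when $\beta=\gamma+1$ and $Q_\beta\in\Psi(Q_\gamma)$: then $\alpha(Q_\gamma)=\deg(Q_\beta)>n$, so $\nu_{Q_\gamma}$ already agrees with $\nu$ on every polynomial of degree $\le n$. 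The remaining subcases (limit-key-polynomial successor, or $\beta$ itself a limit) require tracing conditions \textbf{(K1)}--\textbf{(K4)} carefully to extract, for each $f$ of degree $\le n$, a chain element of degree $\le n$ that captures $\nu(f)$; in some configurations this forces the construction to include additional key polynomials coming from intermediate elements of $\Psi(Q_\alpha)$ that would otherwise be skipped.

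The principal obstacle will be this completeness step, particularly the limit-key-polynomial subcase where $Q_{\beta-1}$ itself fails to evaluate $f$ correctly and one must locate an earlier chain member, or enlarge the chain, to recover $\nu(f)$ at degree $\le n$. A secondary obstacle is the limit-ordinal step of the construction itself: simultaneously arranging conditions \textbf{(K1)}--\textbf{(K4)} and verifying that the minimum in \textbf{(K4)} is attained requires delicate interplay between the $\epsilon$-stratification of $K[x]$, the degree filtration, and the cofinal structure of $\bigcup_{\alpha<\lambda}\Psi(Q_\alpha)$.
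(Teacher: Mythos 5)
The paper does not prove this theorem; it is quoted verbatim as Theorem 1.1 of \cite{SopivNova} and used as a black box, so there is no in-paper proof to compare against. Evaluating your proposal on its own merits: the overall framework (transfinite recursion on $\epsilon$, using Theorem \ref{definofkeypol} as the engine, and Corollary \ref{corsobreequaldepsoidkey} for upward persistence) is the right one, but the successor step contains an error that would break both the construction and completeness.

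You write: \emph{``if $\Psi(Q_\alpha)$ admits a $\nu$-maximum, take $Q_{\alpha+1}$ to be any such element; otherwise \textbf{(K2)} holds and $Q_{\alpha+1}$ is a limit key polynomial.''} This is not how the recursion should branch. When $\{\nu(Q')\mid Q'\in\Psi(Q_\alpha)\}$ has no maximum, the next element of $\mathbf{Q}$ should still be taken \emph{inside} $\Psi(Q_\alpha)$ --- one proceeds through a well-ordered, $\epsilon$-cofinal chain within $\Psi(Q_\alpha)$, all of degree $\alpha(Q_\alpha)$, and only at the limit ordinal where that chain has been exhausted does one introduce a limit key polynomial via \textbf{(K1)}--\textbf{(K4)}. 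Skipping directly from $Q_\alpha$ to a limit key polynomial $Q_{\alpha+1}$ discards precisely the key polynomials of degree $\alpha(Q_\alpha)$ that completeness demands: if $f$ has degree $n=\alpha(Q_\alpha)$ and $\nu_{Q_\alpha}(f)<\nu(f)$ but $\nu_{Q'}(f)=\nu(f)$ for some $Q'\in\Psi(Q_\alpha)$ further along, then no element of your chain of degree $\le n$ computes $\nu(f)$, since the next element after $Q_\alpha$ already has larger degree. You notice this yourself in the final paragraph (``this forces the construction to include additional key polynomials\ldots''), but you treat it as a patch to the completeness argument rather than what it is: the successor rule must be ``always take the next element from $\Psi(Q_\alpha)$,'' with limit key polynomials appearing only at limit ordinals. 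Note also that \textbf{(K3)} itself presupposes this: it requires $\nu_{Q'}(Q)<\nu(Q)$ for \emph{every} $Q'\in\Psi(Q_-)$, so the limit key polynomial lies $\epsilon$-above all of $\Psi(Q_-)$, which is incompatible with being an immediate successor of $Q_\alpha$ when $\Psi(Q_\alpha)$ is unbounded.

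A smaller issue: the base case as stated (``$Q_0=x-a$ with $\nu(x-a)$ maximal among monic linear polynomials'') assumes a maximum that need not exist. Since every monic linear polynomial is a key polynomial, one simply starts with any monic linear $Q_0$ and lets the recursion run through $\Psi(Q_0)$; if $\alpha(Q_0)=1$ the first stratum is a possibly unbounded chain of linear key polynomials, handled exactly as above. Finally, you explicitly leave the completeness verification and the limit-ordinal bookkeeping open, so even setting aside the successor-step error, the proposal is a sketch of the architecture rather than a proof.
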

\begin{Obs}
In \cite{SopivNova}, the definition of complete sequence does not require that $\deg(Q)\leq\deg(f)$ as in our definition. This property is important and the proof of Theorem 1.1 in \cite{SopivNova} guarantees that the obtained sequence satisfies the additional property.
\end{Obs}
\section{Graded algebras}\label{gradedalgebras}
Let $R$ be an integral domain and $\nu$ a valuation on $R$. For each $\gamma\in \nu(R)$, we consider the abelian groups
\begin{equation*}
\mathcal{P}_{\gamma}=\{a \in R \mid \nu(a) \geq \gamma\} \mbox{ and } \mathcal{P}^{+}_{\gamma}=\{a \in R \mid \nu(a) > \gamma\}.
\end{equation*}

\begin{Def}
The \textbf{graded algebra of $R$ associated to $\nu$} is defined as
\begin{equation*}
{\rm gr}_\nu(R)= \displaystyle  \bigoplus_{\gamma \in \nu(R)} \mathcal{P}_{\gamma}/\mathcal{P}^{+}_{\gamma}.
\end{equation*}
\end{Def}
It is not difficult to show that ${\rm gr}_\nu(R)$ is an integral domain. For $a\in R$ we will denote by ${\rm in}_\nu(a)$ the image of $a$ in
\[
\mathcal P_{\nu(a)}/\mathcal P_{\nu(a)}^+\subseteq{\rm gr}_\nu(R).
\]

For the remaining of this section we will consider a valuation $\nu$ on $K[x]$ and a key polynomial $Q\in K[x]$. Then the truncation $\nu_Q$ of $\nu$ on $Q$ is a valuation. For simplicity of notation we will write
\[
{\rm gr}_{Q}(K[x]):={\rm gr}_{\nu_Q}(K[x])
\]
and for $f\in K[x]$ we denote $\inv_Q(f):= \inv_{\nu_Q}(f)$. Let $R_Q$ be the additive subgroup of ${\rm gr}_{Q}(K[x])$ generated by
\[
\{\inv_Q(f)\mid f\in K[x]_n\}\mbox{ where }n:=\deg(Q),
\]
and
\[
y:=\inv_Q(Q).
\]
\begin{Prop}
The set $R_Q$ is a subring of ${\rm gr}_{Q}(K[x])$, $y$ is transcendental over $R_Q$ and
\[
{\rm gr}_{Q}(K[x])=R_Q[y].
\]
\end{Prop}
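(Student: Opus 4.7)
The plan is to establish the three assertions in sequence---first that $R_Q$ is a subring, then that ${\rm gr}_Q(K[x])=R_Q[y]$, and finally that $y$ is transcendental over $R_Q$---with the $Q$-expansion serving as the main technical tool throughout, together with Lemma~\ref{lemaonkeypollder}(iii).

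For the subring claim, since $R_Q$ is already closed under addition and contains $1=\inv_Q(1)$, only multiplicative closure needs verification, and it suffices to check products of generators $\inv_Q(f)\cdot\inv_Q(g)$ for $f,g\in K[x]_n$. Because $\nu_Q$ is a valuation (Proposition~\ref{proptruncakeypolval}) and agrees with $\nu$ on $K[x]_n$ (the $Q$-expansion of an element of $K[x]_n$ is simply itself), one has $\inv_Q(f)\cdot\inv_Q(g)=\inv_Q(fg)$ in the graded ring. Writing $fg=aQ+c$ with $a,c\in K[x]_n$, one observes $c\neq 0$: since $Q$ is irreducible (being a key polynomial) and $\deg(f),\deg(g)<n$, we have $Q\nmid fg$. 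Lemma~\ref{lemaonkeypollder}(iii) then gives $\nu(c)=\nu(fg)<\nu(aQ)$, so $\nu_Q(fg)=\nu(c)$ and $fg\equiv c\pmod{\mathcal{P}^+_{\nu(c)}}$, whence $\inv_Q(fg)=\inv_Q(c)\in R_Q$.

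For the equality ${\rm gr}_Q(K[x])=R_Q[y]$, every $f\in K[x]$ admits a $Q$-expansion $f=f_0+f_1Q+\cdots+f_rQ^r$; setting $\gamma:=\nu_Q(f)$ and $I:=\{i:\nu(f_i)+i\nu(Q)=\gamma\}$, the terms $f_iQ^i$ with $i\notin I$ have $\nu_Q$-value strictly greater than $\gamma$ and so vanish modulo $\mathcal{P}^+_\gamma$. This yields $\inv_Q(f)=\sum_{i\in I}\inv_Q(f_i)\,y^i\in R_Q[y]$, and since ${\rm gr}_Q(K[x])$ is additively generated by the $\inv_Q(f)$, the inclusion ${\rm gr}_Q(K[x])\subseteq R_Q[y]$ follows; the reverse inclusion is immediate from the subring property just established.

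Finally, for the transcendence of $y$, I would suppose $\sum_{i=0}^r a_iy^i=0$ with $a_i\in R_Q$ and reduce to a homogeneous relation: since $R_Q$ inherits a $\Gamma$-grading from ${\rm gr}_Q(K[x])$, decomposing each $a_i$ into homogeneous components and grouping summands of $\sum_i a_iy^i$ by total $\Gamma$-degree, each homogeneous piece must vanish independently. It therefore suffices to treat the case $a_i=\inv_Q(f_i)$ with $f_i\in K[x]_n$ and $\nu(f_i)+i\nu(Q)=\gamma$ for a common $\gamma$. The relation then reads $\sum_i\inv_Q(f_iQ^i)=0$ in graded degree $\gamma$, equivalently $\nu_Q(\sum_i f_iQ^i)>\gamma$; but $\sum_i f_iQ^i$ is already in $Q$-expanded form, so by definition its $\nu_Q$-value equals $\min_i\{\nu(f_i)+i\nu(Q)\}=\gamma$ unless every $f_i=0$, a contradiction. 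The main subtlety lies precisely in this reduction to a homogeneous relation before one can meaningfully invoke the $Q$-expansion; after that step everything follows directly from the definition of $\nu_Q$.
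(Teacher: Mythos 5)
Your proof is correct and follows essentially the same route as the paper: multiplicative closure is obtained from the $Q$-expansion of $fg$ together with Lemma~\ref{lemaonkeypollder}(iii), the equality ${\rm gr}_Q(K[x])=R_Q[y]$ from the $Q$-expansion of an arbitrary $f$, and transcendence of $y$ by reducing to a homogeneous relation that contradicts the definition of $\nu_Q$. The only differences are cosmetic (ordering of the three parts and the notation $I$ versus the paper's $S_Q(f)$).
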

\begin{proof}
In order to prove that $R_Q$ is a subgring it is enough to show that it is closed under addition and multiplication, i.e., for $\phi,\psi\in R_Q$ we have that $\phi+\psi\in R_Q$ and $\phi\cdot\psi\in R_Q$. Since $R_Q$ is defined as an additive subgrop, it is closed by addition. In order to show that it is closed under multiplication, it is enough to consider $\phi=\inv_Q(f)$ and $\psi=\inv_Q(g)$ for some $f,g\in K[x]_n$. Write
\[
fg=aQ+r\mbox{ for }a,r\in K[x]_n.
\]
Since $Q$ is irreducible and $\deg(f),\deg(q)<\deg(Q)$ we have that $r\neq 0$. Hence, by Proposition \ref{lemaonkeypollder} \textbf{(iii)} (for $s=2$) we have that
\[
\nu(fg)=\nu(r)<\nu(aQ).
\]
This implies that
\[
\nu_Q(fg-r)=\nu_Q(aQ)=\nu(aQ)>\nu(r)=\nu_Q(r),
\]
and consequently
\[
\inv_Q(f)\cdot\inv_Q(g)=\inv_Q(fg)=\inv_Q(r)\in R_Q.
\]
Hence, $R_Q$ is a ring.

To prove that $y$ is transcendental over $R_Q$ assume that there exists an algebraic equation
\begin{equation}\label{eqsobrelevelrpollingradalg}
a_0+a_1y+\ldots+a_ry^r=0
\end{equation}
for $a_i\in R_Q$. We can assume that for each $i$, $0\leq i\leq n$, $a_i$ is homogeneous of the form $a_i=\inv_Q(f_i)$ and that all the terms on the left hand side in \eqref{eqsobrelevelrpollingradalg} have the same degree. Then
\[
\nu(f_0)=\nu_Q(f_0)=\nu_Q(f_iQ^i)=\nu(f_iQ^i)\mbox{ for every }i,0\leq i\leq r.
\]
This and \eqref{eqsobrelevelrpollingradalg} imply that
\[
\min_{0\leq i\leq n}\{\nu(f_iQ^i)\}=\nu(f_0)=\nu_Q(f_0)<\nu_Q(f_0+f_1Q+\ldots+f_rQ^r)
\]
and this is a contradiction to the definition of $\nu_Q$.

Take now any polynomial $f\in K[x]$ and write $f=f_0+f_1Q+\ldots+f_rQ^r$ with $f_i\in K[x]_n$ for every $i$, $0\leq i\leq r$. Then
\[
\nu_Q\left(f-\sum_{i\in S_Q(f)}f_iQ^i\right)=\nu_Q\left(\sum_{i\notin S_Q(f)}f_iQ^i\right)=\min_{i\notin S_Q(f)}\{\nu\left(f_iQ^i\right)\}>\nu_Q(f)
\]
(where $S_Q(f):=\{i\in\{0,\ldots,r\}\mid \nu(f_iQ^i)=\nu_Q(f)\}$).
Hence,
\[
\inv_Q(f)=\sum_{i\in S_Q(f)}\inv_Q(f_i)y^i\in R_Q[y].
\]
This concludes the proof.
\end{proof}
\begin{Obs}
We observe that the property of key polynomials used to prove the previous result is that they satisfy Property \textbf{(ii)} of Theorem \ref{lemasobreooutrocoiso}.
\end{Obs}
\begin{Cor}\label{Corlsobremaclaneparatruncame}
For $f,g\in K[x]$ if $y$ divides $\inv_Q(f)\cdot\inv_Q(g)$, then $y$ divides $\inv_Q(f)$ or $\inv_Q(g)$ in ${\rm gr}_Q(K[x])$.
\end{Cor}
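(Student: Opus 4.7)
The plan is to reduce the corollary to the standard fact that in a polynomial ring $D[y]$ over an integral domain $D$, the variable $y$ is a prime element. By the preceding proposition we have an isomorphism
\[
{\rm gr}_Q(K[x]) = R_Q[y]
\]
with $y$ transcendental over $R_Q$, so divisibility of homogeneous elements on the left translates to divisibility of polynomials on the right, and the statement becomes: if $y$ divides a product in $R_Q[y]$, then it divides one of the factors.

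First I would observe that the whole graded algebra ${\rm gr}_\nu(R)$ of a valuation on a domain $R$ is itself a domain (this is remarked in the paper just after the definition of the graded algebra). Consequently the subring $R_Q \subseteq {\rm gr}_Q(K[x])$ is an integral domain. Now in the polynomial ring $R_Q[y]$ over a domain, the principal ideal $(y)$ is prime because the evaluation map $R_Q[y] \to R_Q$ sending $y \mapsto 0$ has kernel $(y)$ and image the domain $R_Q$; thus $R_Q[y]/(y) \cong R_Q$ is an integral domain, so $y$ is a prime element of $R_Q[y]$.

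Applying primality of $y$ to the hypothesis $y \mid \inv_Q(f)\cdot\inv_Q(g)$ in $R_Q[y] = {\rm gr}_Q(K[x])$ immediately yields $y \mid \inv_Q(f)$ or $y \mid \inv_Q(g)$, which is the desired conclusion. There is no serious obstacle here; the work was done in the previous proposition, where the concrete expansion $\inv_Q(f) = \sum_{i \in S_Q(f)} \inv_Q(f_i)\, y^i$ together with the transcendence of $y$ over $R_Q$ was established. (If one prefers an explicit argument avoiding the word "prime", one can simply note that $y \mid \sum_i a_i y^i$ in $R_Q[y]$ exactly when the constant term $a_0$ is zero, and compare constant terms in the product expansion; but this is just a restatement of the primality of $y$.)
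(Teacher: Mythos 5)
Your proof is correct and follows essentially the same route as the paper: reduce to $R_Q[y]$ via the preceding proposition and use that $y$ is a prime element of a polynomial ring over the domain $R_Q$. In fact your version is slightly more careful than the paper's, which only asserts that $y$ is \emph{irreducible} and says the result "follows immediately" — irreducibility alone does not imply primality in a general domain, whereas your quotient argument $R_Q[y]/(y)\cong R_Q$ establishes primality directly, which is what the corollary actually needs.
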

\begin{proof}
Since $R_Q$ is a domain and $y$ is transcendental over $R_Q$, we have that $y$ is irreducible in ${\rm gr}_Q(K[x])=R_Q[y]$. Our result follows immediately.
\end{proof}
\begin{Cor}\label{Corlsobremaclaneparatruncame1}
For $f\in K[x]$, if $y$ divides $\inv_Q(f)$ in ${\rm gr}_Q(K[x])$, then $\deg(Q)\leq \deg(f)$.
\end{Cor}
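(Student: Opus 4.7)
The plan is to prove the contrapositive: assuming $\deg(f) < \deg(Q) = n$, I would show that $y$ cannot divide $\inv_Q(f)$ unless $\inv_Q(f) = 0$. Virtually all of the substantive work has already been carried out in the preceding Proposition, which identifies $\mathrm{gr}_Q(K[x])$ with the polynomial ring $R_Q[y]$ in the transcendental variable $y$.

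Specifically, the first step is to observe that if $\deg(f) < n$ then $f \in K[x]_n$, so by the very definition of $R_Q$ as the additive subgroup generated by $\{\inv_Q(h) \mid h \in K[x]_n\}$, the initial form $\inv_Q(f)$ already lies in $R_Q$. The second step is to use the previous Proposition: since $R_Q[y]$ is an ordinary univariate polynomial ring in $y$ over $R_Q$, the elements of $R_Q$ are precisely the ``constants'' in $y$, and in a polynomial ring the variable $y$ divides a constant element only when that constant is zero. Hence $y \mid \inv_Q(f)$ forces $\inv_Q(f) = 0$, and the contrapositive gives the claim.

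There is essentially no obstacle once one has the decomposition $\mathrm{gr}_Q(K[x]) = R_Q[y]$ with $y$ transcendental; this corollary is then pure bookkeeping about degrees in the variable $y$. The only subtlety worth flagging is the trivial case $\inv_Q(f) = 0$ (that is, $\nu_Q(f) = \infty$), where $y \mid \inv_Q(f)$ holds vacuously. The statement is implicitly concerned with the non-trivial divisibility $\inv_Q(f) \neq 0$, and our argument handles exactly this case.
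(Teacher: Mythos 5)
Your proof is correct and takes essentially the same approach as the paper: note that $\deg(f)<\deg(Q)$ forces $\inv_Q(f)\in R_Q$, and then use that $y$ is transcendental over $R_Q$ (so ${\rm gr}_Q(K[x])=R_Q[y]$ is a genuine polynomial ring in $y$) to conclude $y\nmid\inv_Q(f)$. The extra caveat about $\inv_Q(f)=0$ is harmless but not really needed, since $\inv_Q$ is only defined on elements outside $\SU(\nu_Q)$ and is therefore nonzero by convention.
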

\begin{proof}
If $\deg(f)<\deg(Q)$, then $\inv_Q(f)\in R_Q$. Hence, $y\nmid \inv_Q(f)$.
\end{proof}

For the remaining of this section, we will consider a key polynomial $Q$ for $\nu$ and fix an element $Q'\in \Psi(Q)$. We want to study the properties of $\inv_Q(Q')$ in ${\rm gr}_Q(K[x])$. We will start with the following basic result.

\begin{Lema}\label{Lemanotexpectedabihomele}
Let $R$ be a ring, graded by a totally ordered semigroup $\Gamma$. Let $I=\langle f\rangle$ be an ideal generated by a homogeneous element $f$. If for any homogeneous elements $g,h\in R$ we have
\[
f\mid gh\Lra f\mid g\mbox{ or }f\mid h,
\]
then $I$ is a prime ideal.
\end{Lema}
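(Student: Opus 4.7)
The plan is to reduce the statement for arbitrary elements to the homogeneous case by exploiting the total ordering on the grading group. Since $f$ is homogeneous, $I=\langle f\rangle$ is a homogeneous ideal of $R$: it is generated by a homogeneous element, so $I=\bigoplus_{\gamma\in\Gamma}(I\cap R_\gamma)$, and a homogeneous element lies in $I$ if and only if $f$ divides it. I will use this twice.

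Suppose for contradiction that $g,h\in R$ satisfy $gh\in I$ but $g\notin I$ and $h\notin I$. Write the decompositions into homogeneous components
\[
g=g_1+\cdots+g_m,\qquad h=h_1+\cdots+h_n,
\]
with $\deg(g_i)=\gamma_i$ and $\deg(h_j)=\delta_j$, where $\gamma_1<\cdots<\gamma_m$ and $\delta_1<\cdots<\delta_n$ (here I am using that $\Gamma$ is totally ordered). Because $I$ is homogeneous, $g\notin I$ means that some $g_i$ is not in $I$; let $k$ be the smallest index with $g_k\notin I$, i.e.\ $f\nmid g_k$, and likewise pick the smallest $l$ with $f\nmid h_l$.

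Now consider the homogeneous component of $gh$ of degree $\gamma_k+\delta_l$, which equals
\[
\sum_{\gamma_i+\delta_j=\gamma_k+\delta_l}g_i h_j.
\]
I claim every summand except $g_kh_l$ is divisible by $f$. Indeed, if $\gamma_i+\delta_j=\gamma_k+\delta_l$ and $(i,j)\neq(k,l)$, then either $\gamma_i<\gamma_k$ (equivalently $i<k$), in which case $f\mid g_i$ by choice of $k$ and hence $f\mid g_ih_j$, or $\delta_j<\delta_l$, in which case $f\mid h_j$ by choice of $l$ and again $f\mid g_ih_j$. Since $gh\in I$ and $I$ is homogeneous, the whole component lies in $I$; subtracting the summands divisible by $f$ gives $f\mid g_kh_l$. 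But $g_k$ and $h_l$ are homogeneous, so the hypothesis yields $f\mid g_k$ or $f\mid h_l$, contradicting the choices of $k$ and $l$.

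The only genuinely delicate point is the degree-matching argument in step three, and this is precisely where the total ordering of $\Gamma$ is needed: it guarantees that the indices $\gamma_i$ and $\delta_j$ are strictly increasing and that an equality $\gamma_i+\delta_j=\gamma_k+\delta_l$ with $(i,j)\neq(k,l)$ forces one of the pairs of inequalities above. Everything else is formal manipulation of homogeneous decompositions.
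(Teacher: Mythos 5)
Your proof is correct, and the overall strategy is the same as the paper's: decompose $g$ and $h$ into homogeneous components of strictly increasing degree, use that $I=\langle f\rangle$ is a homogeneous ideal (so a homogeneous element lies in $I$ iff $f$ divides it), and isolate a single product $g_kh_l$ by inspecting an appropriate homogeneous component of $gh$. Where you diverge is in how that isolation is organized. The paper runs a double induction on the index pair $(i_0,j_0)$, proving the auxiliary claim ``$f\mid g_i$ or $f\mid h_j$ for all $(i,j)$'' and, in the inductive step, working with the sum $\sum_{i+j=i_0+j_0}g_ih_j$ as if it were a graded piece of $gh$; this indexing is by the sum of the indices, not by degree, and it is not in general a homogeneous component of $gh$. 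You instead argue by contradiction: choose the minimal $k$ with $f\nmid g_k$ and minimal $l$ with $f\nmid h_l$, and pass to the genuine homogeneous component $\sum_{\gamma_i+\delta_j=\gamma_k+\delta_l}g_ih_j$ of $gh$. The observation you isolate---that $\gamma_i+\delta_j=\gamma_k+\delta_l$ with $(i,j)\neq(k,l)$ forces $\gamma_i<\gamma_k$ or $\delta_j<\delta_l$, hence $i<k$ or $j<l$---is exactly what makes the minimality of $k$ and $l$ bite, and it is where the total ordering of $\Gamma$ is used. This is the crux of the lemma, and your formulation handles it more carefully and transparently than the paper's index-sum induction, while being slightly shorter. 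No gap.
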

\begin{proof}
Let $g=g_1+\ldots+g_r$ and $h=h_1+\ldots+h_s$ where $g_i$ and $h_j$ are homogeneous, $\deg(g_i)<\deg(g_{i+1})$ and $\deg(h_j)<\deg(h_{j+1})$ for every $i$, $1\leq i\leq r-1$ and $j$, $1\leq j\leq s-1$. Assume that $gh\in I$, i.e., that $f\mid gh$. We will prove first that
\begin{equation}\label{oureeqkhaprecemastrivial}
f\mid g_i\mbox{ or }f\mid h_j\mbox{ for each }i, 1\leq i\leq r\mbox{ and }j, 1\leq j\leq s.
\end{equation}
Since $\deg(g_1)<\deg(g_i)$ and $\deg(h_1) <\deg(h_j)$ for $i>1$ and $j>1$, the fact that $f\mid gh$ implies that $f\mid g_1h_1$. By our assumption we have that $f\mid g_1$ or $f\mid h_1$. For a given $i_0$, $1<i_0\leq r$ and $j_0$, $1<j_0\leq s$, assume that
\begin{equation}\label{equationofgradedlouco}
f\mid g_{i}\mbox{ or }f\mid h_{j}\mbox{ for each }i<i_0\mbox{ and }j<j_0.
\end{equation}
Since $f\mid gh$, we have that $f$ divides $\displaystyle\sum_{i+j=i_0+j_0}g_ih_j$. By \eqref{equationofgradedlouco} we conclude that $f\mid g_{i_0}h_{j_0}$ and by our assumption $f\mid g_{i_0}$ or $f\mid h_{j_0}$. Recursively, we obtain \eqref{oureeqkhaprecemastrivial}.

It remains to show that $f\mid g$ or $f\mid h$, i.e., that $f\mid g_i$ for every $i$, $1\leq i\leq r$ or that $f\mid h_j$ for every $j$, $1\leq j\leq s$. If this were not the case, then there would exist $i$, $1\leq i\leq r$ and $j$, $1\leq j\leq s$ such that $f\nmid g_i$ and $f\nmid h_j$. This is a contradiction to \eqref{oureeqkhaprecemastrivial}.
\end{proof}

\begin{Lema}\label{lemaqueajudanapropsdoirredl}
For $f\in K[x]$, $\inv_Q(Q')\mid\inv_Q(f)$ if and only if $\nu_Q(f)<\nu(f)$. In particular, if $\inv_Q(Q')\mid\inv_Q(f)$, then $\deg(f)\geq \deg(Q')$.
\end{Lema}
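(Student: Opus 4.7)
The plan is to prove the two directions of the equivalence separately, and then to read off the ``in particular'' claim as an immediate corollary of the easy direction. For the easy direction (that $\inv_Q(Q')\mid \inv_Q(f)$ implies $\nu_Q(f)<\nu(f)$), I will use the fact that every nonzero homogeneous element of $\mathrm{gr}_Q(K[x])$ is of the form $\inv_Q(H)$ for some $H\in K[x]$. Writing the quotient of $\inv_Q(f)$ by $\inv_Q(Q')$ as $\inv_Q(H)$ with $\nu_Q(H)=\nu_Q(f)-\nu_Q(Q')$, the identity $\inv_Q(f)=\inv_Q(H)\cdot \inv_Q(Q')$ (which holds because $\nu_Q$ is a valuation by Proposition \ref{proptruncakeypolval}) translates into $\nu_Q(f-HQ')>\nu_Q(f)$. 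The hypothesis $Q'\in\Psi(Q)$, namely $\nu(Q')>\nu_Q(Q')$, together with $\nu\geq\nu_Q$ pointwise, then forces both $\nu(HQ')>\nu_Q(f)$ and $\nu(f-HQ')>\nu_Q(f)$, from which $\nu(f)>\nu_Q(f)$ follows.

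For the harder direction, I will assume $\nu_Q(f)<\nu(f)$. If $\deg(f)<\deg(Q')=\alpha(Q)$, the definition of $\alpha(Q)$ forces $\nu_Q(f)=\nu(f)$, contradicting the hypothesis; so I may assume $\deg(f)\geq\deg(Q')$ and perform the Euclidean division $f=aQ'+b$ with $\deg(b)<\deg(Q')$. The key inputs will be that $\nu_Q(b)=\nu(b)$ (again by minimality of $\alpha(Q)$) and that $\nu_Q(aQ')<\nu(aQ')$ (which follows from $\nu_Q(Q')<\nu(Q')$). A case analysis on the comparison between $\nu_Q(aQ')$ and $\nu_Q(b)$ then finishes the argument: the case $\nu_Q(aQ')<\nu_Q(b)$ gives $\inv_Q(f)=\inv_Q(a)\inv_Q(Q')$ directly, while the case $\nu_Q(aQ')>\nu_Q(b)$ and the sub-case where these valuations are equal but the initial forms $\inv_Q(aQ')$ and $\inv_Q(b)$ do not cancel both lead to $\nu(f)=\nu(b)=\nu_Q(b)=\nu_Q(f)$, a contradiction.

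The delicate case will be $\nu_Q(aQ')=\nu_Q(b)$ with $\inv_Q(aQ')+\inv_Q(b)=0$; here the cancellation means exactly that $\inv_Q(Q')\mid \inv_Q(b)$, and applying the already-established easy direction to $b$ yields $\nu_Q(b)<\nu(b)$, contradicting $\deg(b)<\alpha(Q)$. This is the main obstacle, since it forces me to feed the easy direction back into the proof of the hard one (rather than inducting on $\deg(f)$). The ``in particular'' claim is then free: if $\inv_Q(Q')\mid\inv_Q(f)$, then $\nu_Q(f)<\nu(f)$ by the easy direction, and by the very definition of $\alpha(Q)$ this forces $\deg(f)\geq \alpha(Q)=\deg(Q')$.
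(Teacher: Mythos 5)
Your proof is correct, and it rests on the same pivot as the paper's: Euclidean division $f = aQ' + r$ with $\deg r < \deg Q' = \alpha(Q)$, together with $\nu_Q(r) = \nu(r)$ and $\nu_Q(Q') < \nu(Q')$; the converse is argued the same way in both places. The difference is in how the hard direction is closed. The paper does it in one inequality chain,
\[
\nu_Q(r) = \nu(r) \ge \min\{\nu(f), \nu(aQ')\} > \min\{\nu_Q(f), \nu_Q(aQ')\},
\]
which at a stroke forces $\nu_Q(f) = \nu_Q(aQ') < \nu_Q(r)$ and hence $\inv_Q(f) = \inv_Q(a)\inv_Q(Q')$, with no case split and no possibility of cancellation between $\inv_Q(aQ')$ and $\inv_Q(r)$ to worry about. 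Your three-way case analysis is valid, but the \emph{delicate case} ($\nu_Q(aQ') = \nu_Q(b)$ with $\inv_Q(aQ')+\inv_Q(b)=0$) does not actually require you to feed the easy direction back in: in that situation you still have $\nu(aQ') > \nu_Q(aQ') = \nu_Q(b) = \nu(b)$, so $\nu(f) = \nu(b) = \nu_Q(b)$, while the cancellation gives $\nu_Q(f) > \nu_Q(b) = \nu(f)$, contradicting $\nu_Q \le \nu$ directly. So the self-reference is avoidable, and all three of your contradiction cases are subsumed by the single strict inequality above. Everything else, including the homogeneity of the quotient $\inv_Q(H)$ (which you should perhaps justify, since it uses that $\mathrm{gr}_Q(K[x])$ is a graded domain) and the ``in particular'' step, is fine.
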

\begin{proof}
Take any polynomial $f\in K[x]$ such that $\nu_Q(f)<\nu(f)$. By the minimality of the degree of $Q'$, we have that
\[
f=a Q'+r\mbox{ with }a\neq 0\mbox{ and }\deg(r)<\deg(Q').
\]
Since $\deg(r)<\deg(Q')$ we have that $\nu_Q(r)=\nu(r)$.
Then
\[
\nu_Q(r)=\nu(r)\geq\min\{\nu(f),\nu(aQ')\}>\min\{\nu_Q(f),\nu_Q(aQ')\},
\]
which gives us
\[
\inv_Q(f)=\inv_Q(aQ')=\inv_Q(a)\inv_Q(Q').
\]

For the converse, assume that $\inv_Q(Q')\mid\inv_Q(f)$. This implies that there exists $a\in K[x]$ such that $\inv_Q(aQ')=\inv_Q(f)$. Then
\[
\nu_Q(f-aQ')>\nu_Q(f)=\nu_Q(aQ').
\]
If $\nu_Q(f)=\nu(f)$, then
\[
\nu(f-aQ')\geq\nu_Q(f-aQ')>\nu_Q(f)=\nu(f).
\]
This implies that $\nu(f)=\nu(aQ')$ and hence
\[
\nu(aQ')=\nu(f)=\nu_Q(f)=\nu_Q(aQ').
\]
This is a contradiction to $\nu(Q')<\nu_Q(Q')$.
\end{proof}
\begin{Prop}
Assume that $Q$ is a key polynomial and that $Q'\in\Psi(Q)$. Then
\[
I_Q:=\left\langle\{\inv_Q(f)\mid\nu_Q(f)<\nu(f)\}\right\rangle
\]
is a prime ideal of ${\rm gr}_Q(K[x])$ generated by $\inv_Q(Q')$.
\end{Prop}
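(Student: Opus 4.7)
My plan is to first identify $I_Q$ with the principal ideal $\langle \inv_Q(Q')\rangle$, and then derive primality by applying Lemma \ref{Lemanotexpectedabihomele} to the homogeneous generator $\inv_Q(Q')$. For the principality, Lemma \ref{lemaqueajudanapropsdoirredl} directly states that each of the prescribed generators of $I_Q$, namely $\inv_Q(f)$ with $\nu_Q(f)<\nu(f)$, is a multiple of $\inv_Q(Q')$, so $I_Q\subseteq\langle\inv_Q(Q')\rangle$. Conversely, since $Q'\in\Psi(Q)$ satisfies $\nu_Q(Q')<\nu(Q')$ by definition, $\inv_Q(Q')$ is itself one of the generators, yielding the reverse inclusion.

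For primality, I would invoke Lemma \ref{Lemanotexpectedabihomele}, which applies because $\mathrm{gr}_Q(K[x])$ is graded by the ordered semigroup $\nu_Q(K[x])$ and $\inv_Q(Q')$ is homogeneous. Every nonzero homogeneous element of $\mathrm{gr}_Q(K[x])$ is, by construction of the graded algebra, an initial form $\inv_Q(h)$ of some polynomial $h\in K[x]$, so the hypothesis of Lemma \ref{Lemanotexpectedabihomele} reduces to the following claim: for every $f,g\in K[x]$, if $\inv_Q(Q')\mid\inv_Q(f)\cdot\inv_Q(g)$ then $\inv_Q(Q')$ divides $\inv_Q(f)$ or $\inv_Q(g)$. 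Since $\mathrm{gr}_Q(K[x])$ is a domain, the product $\inv_Q(f)\cdot\inv_Q(g)$ coincides with $\inv_Q(fg)$, so by Lemma \ref{lemaqueajudanapropsdoirredl} the hypothesis rewrites as $\nu_Q(fg)<\nu(fg)$ and the desired conclusion as $\nu_Q(f)<\nu(f)$ or $\nu_Q(g)<\nu(g)$.

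Taking the contrapositive, I would assume $\nu_Q(f)=\nu(f)$ and $\nu_Q(g)=\nu(g)$; multiplicativity of $\nu$ and of $\nu_Q$ (the latter being a valuation by Proposition \ref{proptruncakeypolval}) then forces
\[
\nu_Q(fg)=\nu_Q(f)+\nu_Q(g)=\nu(f)+\nu(g)=\nu(fg),
\]
completing the argument. I do not anticipate any genuine obstacle: the substantive content has already been isolated in Lemmas \ref{lemaqueajudanapropsdoirredl} and \ref{Lemanotexpectedabihomele}, and the present proposition is essentially the formal combination of the two, modulo the routine observation that homogeneous elements of $\mathrm{gr}_Q(K[x])$ are initial forms of elements of $K[x]$.
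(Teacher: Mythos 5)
Your proof is correct and follows the paper's argument essentially step for step: use Lemma \ref{lemaqueajudanapropsdoirredl} for the identification $I_Q=\langle\inv_Q(Q')\rangle$, then reduce primality via Lemma \ref{Lemanotexpectedabihomele} to showing that $\inv_Q(Q')\mid\inv_Q(fg)$ implies $\inv_Q(Q')\mid\inv_Q(f)$ or $\inv_Q(Q')\mid\inv_Q(g)$, and translate this through Lemma \ref{lemaqueajudanapropsdoirredl} into the statement that $\nu_Q(fg)<\nu(fg)$ forces $\nu_Q(f)<\nu(f)$ or $\nu_Q(g)<\nu(g)$. You add two clarifications the paper leaves implicit: that nonzero homogeneous elements of ${\rm gr}_Q(K[x])$ are exactly initial forms $\inv_Q(h)$, and the contrapositive justification (multiplicativity of $\nu$ and $\nu_Q$) for the final implication; both are correct and make the argument more transparent.
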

\begin{proof}
For each $f\in K[x]$ with $\nu_Q(f)<\nu(f)$, Lemma \ref{lemaqueajudanapropsdoirredl} gives us that $\inv_Q(Q')\mid\inv_Q(f)$. This implies that $I_Q\subseteq \langle\inv_Q(Q')\rangle$ and since the other inclusion is trivial, we have the equality.

By Lemma \ref{Lemanotexpectedabihomele}, in order to prove that $I_Q$ is a prime ideal, it is enough to show that for $f,g\in K[x]$, if $\inv_Q(Q')\mid\inv_Q(f)\cdot \inv_Q(g)$, then
\[
\inv_Q(Q')\mid\inv_Q(f)\mbox{ or }\inv_Q(Q')\mid \inv_Q(g).
\]
Assume that
\[
\inv_Q(Q')\mid\inv_Q(f)\cdot \inv_Q(g)=\inv_Q(fg).
\]
By Lemma \ref{lemaqueajudanapropsdoirredl} we have that $\nu_Q(fg)<\nu(fg)$ and hence $\nu_Q(f)<\nu(f)$ or $\nu_Q(g)<\nu(g)$. Hence, by Lemma \ref{lemaqueajudanapropsdoirredl}
\[
\inv_Q(Q')\mid\inv_Q(f)\mbox{ or }\inv_Q(Q')\mid\inv_Q(g).
\]
This concludes the proof.
\end{proof}
\begin{Cor}\label{Corlsobremaclaneparatruncame3}
The element $\inv_Q(Q')$ is irreducible in ${\rm gr}_Q(K[x])$.
\end{Cor}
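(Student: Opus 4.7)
The plan is to deduce the irreducibility of $\inv_Q(Q')$ directly from the preceding proposition, which establishes that $I_Q=\langle\inv_Q(Q')\rangle$ is a prime ideal of $\mathrm{gr}_Q(K[x])$, together with the fact (already noted when the graded algebra was introduced) that $\mathrm{gr}_Q(K[x])$ is an integral domain. In any integral domain, a nonzero element generating a prime ideal is automatically irreducible, so the corollary is essentially a formal consequence of what is already in hand.

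First I would note that $\inv_Q(Q')\neq 0$, since $Q'\neq 0$ has a well-defined value $\nu_Q(Q')\in\nu(K[x])$, so $\inv_Q(Q')$ is a genuine nonzero homogeneous element. In particular, $\inv_Q(Q')$ is not a unit either, since homogeneous units in $\mathrm{gr}_Q(K[x])$ lie in degree $0$ while $\inv_Q(Q')$ has positive degree $\nu_Q(Q')=\nu(Q)>0$ in the grading.

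Next, I would carry out the standard ``prime implies irreducible'' argument. Suppose
\[
\inv_Q(Q')=\phi\cdot\psi
\]
for some $\phi,\psi\in\mathrm{gr}_Q(K[x])$. Then $\phi\psi\in I_Q$, and since $I_Q$ is a prime ideal, either $\phi\in I_Q$ or $\psi\in I_Q$. Say $\phi=\inv_Q(Q')\cdot\alpha$ for some $\alpha\in\mathrm{gr}_Q(K[x])$. Substituting gives
\[
\inv_Q(Q')=\inv_Q(Q')\cdot\alpha\psi,
\]
and since $\mathrm{gr}_Q(K[x])$ is an integral domain and $\inv_Q(Q')\neq 0$, we may cancel to obtain $\alpha\psi=1$. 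Hence $\psi$ is a unit, proving that $\inv_Q(Q')$ is irreducible.

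There is no real obstacle here: all the content has been absorbed into the previous proposition (via Lemmas \ref{Lemanotexpectedabihomele} and \ref{lemaqueajudanapropsdoirredl}), and the step from ``generates a prime ideal'' to ``is irreducible'' is the routine domain-theoretic argument above.
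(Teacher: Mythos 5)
Your approach is the one the paper intends: the preceding proposition says $I_Q=\langle\inv_Q(Q')\rangle$ is a prime ideal in the integral domain $\mathrm{gr}_Q(K[x])$, and a nonzero non-unit generating a prime ideal in a domain is irreducible, by exactly the cancellation argument you give. The paper itself states this as an immediate consequence and offers no proof, so you are filling in the same routine step. However, your justification that $\inv_Q(Q')$ is not a unit is not correct: in a domain graded by an ordered abelian group, homogeneous units need not sit in degree $0$ (a unit of degree $\gamma$ simply has an inverse of degree $-\gamma$), and here $\mathrm{gr}_Q(K[x])$ is graded by the semigroup $\nu_Q(K[x])\supseteq\nu(K)$, which generally contains negative elements; moreover $\nu_Q(Q')$ need be neither equal to $\nu(Q)$ nor positive. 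The cleanest fix is to invoke Lemma \ref{lemaqueajudanapropsdoirredl} with $f=1$: since $\nu_Q(1)=0=\nu(1)$, we have $\inv_Q(Q')\nmid\inv_Q(1)=1$, so $\inv_Q(Q')$ is not a unit. (Equivalently, if it were a unit then $I_Q$ would be the whole ring, contradicting that $I_Q$ is a prime, hence proper, ideal.) With that repair your argument is complete and matches the paper's route.
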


\section{MacLane-Vaqui\'e key polynomials and augmented valuations}\label{Maclanevaquie}
This section is based on \cite{Mac_1} and \cite{Vaq_1}. We decided to show all the needed results here for the convenience of the reader.
Our main goal is to axiomatize a way to extend, if possible, any given valuation on $K[x]$. For this purpose we will need to introduce the concept of MacLane-Vaqui\'e key polynomials.

Let $K$ be a field and let $\nu$ be a valuation on $K[x]$
\begin{Def}
Take $f,g\in K[x]$,
\begin{description}
\item[(i)]  We say that \textbf{$f$ is $\nu$-equivalent to $g$} (and denote by $f\sim_\nu g$) if $\textrm{in}_\nu(f)=\textrm{in}_\nu(g)$. 
\item[(ii)] We say that \textbf{$g$ $\nu$-divides $f$} (denote by $g\mid_\nu f$) if there exists $h\in K[x]$ such that $f\sim_\nu g\cdot h$.
\end{description}
\end{Def}
\begin{Def}
A monic polynomial $Q\in K[x]$ is a \textbf{MacLane-Vaqui\'e key polynomial for $\nu$} if
\begin{description}
\item[(KP1)] $Q$ is $\nu$-irreducible, i.e.,
\[
Q\mid_\nu f\cdot g \Lra Q\mid_\nu f\mbox{ or }Q\mid_\nu g;
\]
and
\item[(KP2)] for every $f\in K[x]$ we have
\[
Q\mid_\nu f\Lra \deg(f)\geq \deg(Q).
\]
\end{description}
\end{Def}
Let $Q$ be a MacLane-Vaqui\'e key polynomial for $\nu$, $\Gamma'$ be a group extension of $\nu(K[x])$ and $\gamma\in \Gamma'$ such that $\gamma>\nu(Q)$. For every $f\in K[x]$, let
\[
f=f_0+f_1Q+\ldots+f_nQ^n
\]
be the $Q$-expansion of $f$. Define the map
\begin{equation}\label{eqquedefinagungemntvalsu}
\nu'(f):=\min_{0\leq i\leq n}\{\nu(f_i)+i\gamma\}.
\end{equation}

\begin{Teo}\label{Thmaugmentedval}
The map $\nu'$ is a valuation on $K[x]$.
\end{Teo}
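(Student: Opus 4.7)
The plan is to derive Theorem \ref{Thmaugmentedval} as a direct application of Theorem \ref{lemasobreooutrocoiso} with $S=K[x]$, $\mu=\nu$, and $\mu'=\nu'$. Properties \textbf{(V3)} and \textbf{(V2)} for $\nu'$ come essentially for free: \textbf{(V3)} is immediate from the definition in \eqref{eqquedefinagungemntvalsu}, and \textbf{(V2)} follows from Lemma \ref{Lemasobreextsvelidesitra} applied to $\nu$, which trivially satisfies \textbf{(V2)}. Hypothesis \textbf{(i)} of Theorem \ref{lemasobreooutrocoiso}, namely $\nu(\bar f\bar g)=\nu(\bar f)+\nu(\bar g)$ for $\bar f,\bar g\in K[x]_n$, is simply \textbf{(V1)} for $\nu$. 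So the only genuine task is to verify hypothesis \textbf{(ii)}.

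For \textbf{(ii)}, the starting observation is that, since $\deg(\bar f),\deg(\bar g)<n=\deg(Q)$, \textbf{(KP2)} gives $Q\nmid_\nu \bar f$ and $Q\nmid_\nu \bar g$, and then \textbf{(KP1)} yields $Q\nmid_\nu \bar f\bar g$. Given the equation $\bar f\bar g=aQ+c$ with $a,c\in K[x]_n$, the claim is trivial when $a=0$. When $a\neq 0$, I would compare $\nu(c)$ with $\nu(aQ)$. If $\nu(c)>\nu(aQ)$, then $\inv_\nu(\bar f\bar g)=\inv_\nu(aQ)$, giving $\bar f\bar g\sim_\nu aQ$ and hence $Q\mid_\nu \bar f\bar g$, a contradiction. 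If $\nu(c)=\nu(aQ)$ and $\nu(\bar f\bar g)>\nu(c)$, then $c\sim_\nu -aQ$, so $Q\mid_\nu c$, and \textbf{(KP2)} forces $\deg(c)\geq n$, contradicting $c\in K[x]_n$; the degenerate subcase $c=0$ is ruled out because $c=0$ would force $\bar f\bar g=aQ\sim_\nu aQ$, again yielding $Q\mid_\nu \bar f\bar g$. The remaining possibilities leave $\nu(c)=\nu(\bar f\bar g)\leq\nu(aQ)=\nu(a)+\nu(Q)<\nu(a)+\gamma$, where the strict inequality is the crucial use of the hypothesis $\gamma>\nu(Q)$.

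The main obstacle is this case analysis, specifically the coincident-valuation case $\nu(c)=\nu(aQ)$, which is the only point where one must be delicate. The content comes entirely from playing \textbf{(KP1)} and \textbf{(KP2)} against the degree bound $\deg(c)<n$, together with the inequality $\nu(Q)<\gamma$ delivering the required strict comparison with $\nu(a)+\gamma$. Once hypothesis \textbf{(ii)} is verified, Theorem \ref{lemasobreooutrocoiso} supplies \textbf{(V1)} for $\nu'$, and combined with \textbf{(V2)} and \textbf{(V3)} established above, this completes the proof that $\nu'$ is a valuation.
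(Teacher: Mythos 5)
Your proposal follows exactly the route the paper takes: properties \textbf{(V3)} and \textbf{(V2)} are dispatched via the definition and Lemma \ref{Lemasobreextsvelidesitra}, and property \textbf{(V1)} is obtained by invoking Theorem \ref{lemasobreooutrocoiso} with $\mu=\nu$, $\mu'=\nu'$, where hypothesis \textbf{(ii)} is checked using \textbf{(KP1)}, \textbf{(KP2)}, and the inequality $\gamma>\nu(Q)$. The only cosmetic difference is that the paper isolates the verification of hypothesis \textbf{(ii)} as a separate Lemma (\ref{primeiraparteleloup}) proved by contradiction (assuming $\nu(r)=\nu(fg)<\nu(a)+\gamma$ fails and deriving $\max\{\nu(r),\nu(fg)\}>\nu(aQ)$), whereas you inline it with a direct case split on the comparison between $\nu(c)$ and $\nu(aQ)$; the mathematical content and the way \textbf{(KP1)} and \textbf{(KP2)} are played against the degree bound are the same.
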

In order to prove Theorem \ref{Thmaugmentedval}, we will need the following lemma.

\begin{Lema}\label{primeiraparteleloup}
Let $Q$ be a MacLane-Vaqui\'e key polynomial for $\nu$. Then
\begin{description}
\item[(i)] for $f,r,a\in K[x]$ with $\deg(r)<\deg(Q)$ and $f=aQ+r$ we have
\[
\nu(f)\leq \min\{\nu(aQ),\nu(r)\}; \mbox{ and}
\]
\item[(ii)] for $f,g,r,a\in K[x]$ with
\[
\max\{\deg(f),\deg(g),\deg(r)\}<\deg(Q)
\]
if $fg=aQ+r$, then
\begin{equation}\label{aeququenaoepsjracont}
\nu(r)=\nu(fg)<\nu(a)+\gamma.
\end{equation}
\end{description}
\end{Lema}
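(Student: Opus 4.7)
My plan is to establish (i) directly from the ultrametric inequality \textbf{(V2)} and the axioms (KP1), (KP2) of a MacLane-Vaqui\'e key polynomial, and then to derive (ii) from (i) combined with those same axioms and the assumption $\gamma>\nu(Q)$.

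For (i), the inequality $\nu(f)\ge\min\{\nu(aQ),\nu(r)\}$ is immediate from \textbf{(V2)}, so I need the reverse bound. When $\nu(aQ)\neq\nu(r)$, the standard ultrametric fact gives $\nu(f)=\min\{\nu(aQ),\nu(r)\}$ and both inequalities in the conclusion hold trivially. The substantial case is $\nu(aQ)=\nu(r)=:\gamma_0$. Suppose for contradiction $\nu(f)>\gamma_0$. Then in the graded piece $\mathcal{P}_{\gamma_0}/\mathcal{P}_{\gamma_0}^+$ the relation $f=aQ+r$ gives $\textrm{in}_\nu(aQ)=-\textrm{in}_\nu(r)$, i.e.\ $aQ\sim_\nu -r$. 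By definition of $\nu$-divisibility this means $Q\mid_\nu r$, and then (KP2) forces $\deg(r)\geq\deg(Q)$, contradicting the hypothesis $\deg(r)<\deg(Q)$.

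For (ii), I first note that $\deg(f),\deg(g)<\deg(Q)$ together with (KP2) gives $Q\nmid_\nu f$ and $Q\nmid_\nu g$, and then (KP1) yields $Q\nmid_\nu fg$. Applying (i) to $fg=aQ+r$ produces $\nu(fg)=\min\{\nu(aQ),\nu(r)\}$. To pin down which summand realizes the minimum, assume toward a contradiction that $\nu(aQ)<\nu(r)$; then $\textrm{in}_\nu(fg)=\textrm{in}_\nu(aQ)$, i.e.\ $fg\sim_\nu aQ$, so $Q\mid_\nu fg$, contradicting what we just derived. Hence $\nu(r)\leq\nu(aQ)$ and therefore $\nu(fg)=\nu(r)$. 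Finally, chaining
\[
\nu(fg)=\nu(r)\leq \nu(aQ)=\nu(a)+\nu(Q)<\nu(a)+\gamma,
\]
where the strict inequality uses exactly the running assumption $\gamma>\nu(Q)$, completes the proof.

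The only subtle point I anticipate is the passage, in part (i), from $\nu(f)>\nu(aQ)=\nu(r)$ to the statement $Q\mid_\nu r$: one has to make sure that the cancellation of initial forms in the graded algebra genuinely translates into a $\nu$-equivalence $aQ\sim_\nu -r$, which then gives $\nu$-divisibility by $Q$ with witness $-a$ (the sign being absorbed because $\nu$-divisibility is insensitive to sign). Every subsequent step is a routine application of (KP1), (KP2) and the inequality $\gamma>\nu(Q)$.
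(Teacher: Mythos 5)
Your proof is correct, and in substance it matches the paper's: both parts ultimately rest on the same three ingredients — \textbf{(KP1)}, \textbf{(KP2)}, and the inequality $\gamma>\nu(Q)$ — with $\nu$-divisibility by $Q$ detected from the cancellation of initial forms in the relations $f=aQ+r$ and $fg=aQ+r$, exactly as the paper does. The only organizational difference is in part (ii): you first derive $Q\nmid_\nu fg$ from \textbf{(KP1)} and \textbf{(KP2)}, invoke part (i) applied to $fg=aQ+r$ to obtain $\nu(fg)=\min\{\nu(aQ),\nu(r)\}$, and rule out $\nu(aQ)<\nu(r)$ because it would force $Q\mid_\nu fg$, after which $\nu(fg)=\nu(r)\leq\nu(aQ)=\nu(a)+\nu(Q)<\nu(a)+\gamma$ finishes the argument; the paper instead gives a self-contained contradiction proof for (ii), assuming the conclusion fails, showing $\max\{\nu(r),\nu(fg)\}>\nu(aQ)$, and splitting into the cases $\nu(fg)>\nu(aQ)$ and $\nu(r)>\nu(aQ)$. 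Your reuse of (i) is a modest streamlining but the two arguments are the same at heart.
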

\begin{proof}
In order to prove \textbf{(i)} we observe that if $\nu(f)>\nu(r)$, then
\[
\nu(r)<\nu(f)=\nu(aQ+r).
\]
Consequently $Q\mid_\nu -r$ and since $\deg(r)<\deg(Q)$ this contradicts the fact that $Q$ is a MacLane-Vaqui\'e key polynomial for $\nu$. Hence, $\nu(r)\geq\nu(f)$ and consequently
\[
\nu(aQ)\geq\min\{\nu(f),\nu(r)\}=\nu(f)
\]
and this shows \textbf{(i)}.

In order to prove \textbf{(ii)}, assume aiming for a contradiction, that \eqref{aeququenaoepsjracont} is not satisfied. Then
\begin{equation}\label{ainbdamaisdoidaeenex}
\max\{\nu(r),\nu(fg)\}>\nu(aQ).
\end{equation}
Indeed, if
\[
\nu(r)\geq \nu(a)+\gamma\mbox{ or }\nu(fg)\geq \nu(a)+\gamma,
\]
then we have \eqref{ainbdamaisdoidaeenex} because $\gamma>\nu(Q)$. On the other hand, if $\nu(r)\neq\nu(fg)$, then
\[
\max\{\nu(r),\nu(fg)\}>\min\{\nu(r),\nu(fg)\}=\nu(aQ).
\]

If $\nu(fg)> \nu(aQ)$, then
\[
\nu(aQ)=\nu(r)<\nu(aQ+r)=\nu(fg)
\]
and hence $Q\mid_\nu r$. Analogously, if $\nu(r)>\nu(aQ)$, then $Q\mid_\nu fg$ and since $Q$ satisfies \textbf{(KP1)} we conclude that $Q\mid_\nu f$ or $Q\mid_\nu g$. In each case we obtain a contradiction, because $\max\{\deg(r),\deg(f),\deg(g)\}<\deg(Q)$ and $Q$ satisfies \textbf{(KP2)}.
\end{proof}
\begin{proof}[Proof of Theorem \ref{Thmaugmentedval}]
The property \textbf{(V3)} follows from the definition of $\nu'$ and property \textbf{(V2)} is a consequence of Lemma \ref{Lemasobreextsvelidesitra}. In order to prove \textbf{(V1)} it is enough to show that conditions \textbf{(i)} and \textbf{(ii)} of Lemma \ref{lemasobreooutrocoiso} are satisfied for $\nu=\mu$ and $\nu'=\mu'$. The condition \textbf{(i)} is satisfied because $\nu$ is a valuation and condition \textbf{(ii)} follows immediately from Lemma \ref{primeiraparteleloup}.
\end{proof}

\begin{Def}
The map $\nu'$ is called an \textbf{augmented valuation} and denoted by
\[
\nu':=[\nu;\nu'(Q)=\gamma].
\]
\end{Def}
\begin{Obs}
From now on, when we say that $\nu'$ is of the form
\[
\nu'=[\nu;\nu'(Q)=\gamma]
\]
we mean that $\nu$ is a valuation on $K[x]$, $Q$ is a key polynomial for $\nu$, $\gamma>\nu(Q)$ and that $\nu'$ is the valuation presented in \eqref{eqquedefinagungemntvalsu}.
\end{Obs}

We want to iterate the construction of augmented valuations as above. For this purpose we need the concept of iterated family of valuations. Consider a family $\mathcal F=\{(\nu_\alpha,Q_\alpha,\gamma_\alpha)\}_{\alpha\in A}$, indexed by a totally ordered set $A$, where for every $\alpha\in A$, $\nu_\alpha$ is a valuation on $K[x]$, $Q_\alpha\in K[x]$ and $\gamma_\alpha$ is an element in a fixed ordered abelian group $\Gamma$.
\begin{Def}\label{defoffamiliaugmentedintereda}
The family $\mathcal F$ is called a \textbf{family of augmented iterated valuations} if for every $\alpha\in A$, except the smallest element of $A$, there exists $\alpha_-\in A$, $\alpha_-<\alpha$, such that $\nu_\alpha= [\nu_{\alpha_-};\nu_\alpha(Q_\alpha)=\gamma_\alpha]$, and the following properties hold.
\begin{description}
\item[(i)] If $\alpha$ admits an immediate predecessor in $A$, $\alpha_-$ is that predecessor, and in the case when $\alpha_-$ is not the smallest element of $A$, the polynomials $Q_\alpha$ and $Q_{\alpha_-}$ are not $\nu_{\alpha_-}$-equivalent and satisfy $\deg(Q_{\alpha_-}) \leq \deg(Q_\alpha)$;
\item[(ii)] If $\alpha$ does not have an immediate predecessor in $A$, for every $\beta\in A$ such that $\alpha_- < \beta < \alpha$, we have
\[
\nu_\beta = [\nu_{\alpha_-} ; \nu_\beta (Q_\beta) = \gamma_\beta]
\]
and
\[
\nu_\alpha = [\nu_\beta; \nu_\alpha(Q_\alpha) =\gamma_\alpha ],
\]
and the polynomials $Q_\alpha$ and $Q_{\alpha_-}$ have the same degree.
\end{description}
\end{Def}

\begin{Obs}
If $\mathcal F=\{(\nu_\alpha,Q_\alpha,\gamma_\alpha)\}_{\alpha\in A}$ is a family of augmented iterated valuations of $K[x]$ and $I$ is a final or initial segment of $A$, then also
\[
\mathcal F_I=\{(\nu_\alpha,Q_\alpha,\gamma_\alpha)\}_{\alpha\in I}
\]
is a family of augmented iterated valuations of $K[x]$.
\end{Obs}

\begin{Def}
For a family of augmented valuations $\mathcal F=\{(\nu_\alpha,Q_\alpha,\gamma_\alpha)\}_{\alpha\in A}$ and polynomials $f,g\in K[x]$ we say that $f$ $\mathcal F$-divides $g$, and denote by $f\mid_{\mathcal F} g$, if there exists $\alpha_0\in A$ such that $f\mid_{\nu_\alpha} g$ for every $\alpha\in A$ with $\alpha\geq\alpha_0$.
\end{Def}

\begin{Def}
A family of augmented iterated valuations
\[
\mathcal F=\{(\nu_\alpha,Q_\alpha,\gamma_\alpha)\}_{\alpha\in A}
\]
is said to be continued if $\{\gamma_\alpha\mid\alpha\in A\}$ does not have maximal element, $\deg(Q_\alpha)=\deg(Q_\beta)$ for every $\alpha,\beta\in A$ and there exists a valuation $\nu$ on $K[x]$ such that
\[
\nu_\alpha= [\nu; \nu_\alpha(Q_\alpha)=\gamma_\alpha]\mbox{ for every }\alpha\in A.
\]
\end{Def}

\begin{Lema}\label{lemaesquscpamimport}
Let $\mathcal{F}$ be a continued family of augmented valuations and assume that for every $\alpha,\beta\in A$ with $\alpha<\beta$, we have $\nu_\alpha\leq\nu_\beta$, i.e., $\nu_\alpha(f)\leq \nu_\beta(f)$ for every $f\in K[x]$. Then for $f\in K[x]$ we have that
\[
\nu_\alpha(f)<\nu_\beta(f)\mbox{ for every }\alpha,\beta\in A\mbox{ with }\alpha<\beta
\]
or there exists $\alpha_f\in A$ such that
\[
\nu_{\alpha_f}(f)=\nu_\alpha(f)\mbox{ for every }\alpha\geq\alpha_f.
\]
\end{Lema}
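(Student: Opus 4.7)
The approach is to prove the contrapositive: supposing $\nu_{\alpha_0}(f)=\nu_{\beta_0}(f)=:v$ for some $\alpha_0<\beta_0$ in $A$, I would show $\nu_\gamma(f)=v$ for every $\gamma\geq\alpha_0$. For $\alpha_0\leq\gamma\leq\beta_0$ the sandwich $v=\nu_{\alpha_0}(f)\leq\nu_\gamma(f)\leq\nu_{\beta_0}(f)=v$ handles it, so the real work is $\gamma>\beta_0$. My plan is to use the $Q_{\alpha_0}$-expansion $f=\sum_i f_iQ_{\alpha_0}^i$ (with $\deg f_i<d:=\deg Q_{\alpha_0}$), let $S=\{i:\nu(f_i)+i\gamma_{\alpha_0}=v\}$, and split $f=g+h$ with $g=\sum_{i\in S}f_iQ_{\alpha_0}^i$; the ``off-min'' part $h$ has $\nu_\gamma(h)\geq\nu_{\alpha_0}(h)>v$ by monotonicity and the task reduces to showing $\nu_\gamma(g)=v$.

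The dichotomy that drives the argument lies in $\eta:=\nu_{\beta_0}(Q_{\alpha_0})=\min\{\nu(Q_{\beta_0}-Q_{\alpha_0}),\gamma_{\beta_0}\}$, which satisfies $\eta\geq\gamma_{\alpha_0}$ since $\nu_{\alpha_0}(Q_{\alpha_0})=\gamma_{\alpha_0}$ and $\nu_{\alpha_0}\leq\nu_{\beta_0}$. In the subcase $\eta>\gamma_{\alpha_0}$, the computation $\nu_{\beta_0}(f_iQ_{\alpha_0}^i)=v+i(\eta-\gamma_{\alpha_0})>v$ for $i\in S$ with $i>0$ forces $0\in S$ with $\nu(f_0)=v$; the same computation for $\nu_\gamma$ (using $\nu_\gamma(Q_{\alpha_0})\geq\eta>\gamma_{\alpha_0}$) then shows the $i=0$ term uniquely achieves the minimum and $\nu_\gamma(g)=v$. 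In the subcase $\eta=\gamma_{\alpha_0}$ I would use $Q_\gamma-Q_{\alpha_0}=(Q_\gamma-Q_{\beta_0})+(Q_{\beta_0}-Q_{\alpha_0})$ with $\nu(Q_\gamma-Q_{\beta_0})\geq\gamma_{\beta_0}>\gamma_{\alpha_0}=\nu(Q_{\beta_0}-Q_{\alpha_0})$ to conclude $\nu(Q_\gamma-Q_{\alpha_0})=\gamma_{\alpha_0}$ together with $\inv_\nu(Q_\gamma-Q_{\alpha_0})=\inv_\nu(Q_{\beta_0}-Q_{\alpha_0})$, and hence $\nu_\gamma(Q_{\alpha_0})=\gamma_{\alpha_0}$; every term of $g$ then again has $\nu_\gamma$-value exactly $v$.

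To conclude $\nu_\gamma(g)=v$ in this second subcase, equality reduces to non-vanishing of the sum $\sum_{i\in S}\inv_{\nu_\gamma}(f_i)\cdot\inv_{\nu_\gamma}(Q_{\alpha_0})^i$ in ${\rm gr}_{\nu_\gamma}(K[x])$. Since $\nu$, $\nu_{\beta_0}$ and $\nu_\gamma$ all coincide on $K[x]_d$, the subalgebras of ${\rm gr}_{\nu_{\beta_0}}(K[x])$ and ${\rm gr}_{\nu_\gamma}(K[x])$ generated by initial forms of $K[x]_d$ are canonically identified, and the equality $\inv_\nu(Q_{\beta_0}-Q_{\alpha_0})=\inv_\nu(Q_\gamma-Q_{\alpha_0})$ sends the analogous sum computing $\inv_{\nu_{\beta_0}}(g)$ to the one computing $\inv_{\nu_\gamma}(g)$; the former is nonzero because $\nu_{\beta_0}(g)=v$, so the latter is nonzero and $\nu_\gamma(g)=v$. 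The main obstacle is rigorously establishing this subalgebra identification—in particular, that the multiplicative structures of these two graded rings correspond correctly so that the non-cancellation transports faithfully.
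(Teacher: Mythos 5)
Your approach can be pushed through, but it is a much longer route than the paper's, and the step you flag at the end is a genuine gap that still needs to be filled. The paper's proof works with the \emph{other} polynomial: after first using the iterated-family structure to reduce to the case where $\nu_{\beta_0}=[\nu_{\alpha_0};\nu_{\beta_0}(Q_{\beta_0})=\gamma_{\beta_0}]$ (replacing $\alpha_0$ by $(\beta_0)_-$ if necessary, which is harmless since the sandwich argument forces equality of values at every intermediate index), it divides $f=aQ_{\beta_0}+r$ with $\deg r<\deg Q_{\beta_0}$. Lemma \ref{primeiraparteleloup}(i) gives $\nu_{\alpha_0}(aQ_{\beta_0})\geq\nu_{\alpha_0}(f)$, while $\gamma_{\beta_0}>\nu_{\alpha_0}(Q_{\beta_0})$ gives $\nu_{\beta_0}(aQ_{\beta_0})>\nu_{\alpha_0}(aQ_{\beta_0})$; combining with $\nu_{\alpha_0}(f)=\nu_{\beta_0}(f)$ one gets $\nu_{\beta_0}(aQ_{\beta_0})>\nu_{\beta_0}(f)$, hence $\nu_{\beta_0}(r)=\nu_{\beta_0}(f)$. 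Since $r\in K[x]_d$, its value is the same under every $\nu_\gamma$, and monotonicity then pins down $\nu_\gamma(f)=\nu_{\gamma}(r)=\nu_{\beta_0}(f)$ for all $\gamma>\beta_0$. No case split on $\eta$, no graded algebras. The moral is that once $\nu_{\alpha_0}(f)=\nu_{\beta_0}(f)$, the $Q_{\beta_0}$-remainder $r$ (not the $Q_{\alpha_0}$-expansion) is the object that stabilizes, and dividing by the \emph{larger} $Q$ makes this transparent.

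On your subcase $\eta=\gamma_{\alpha_0}$: the ``canonical identification'' of the subalgebras generated by $\{\inv(f):f\in K[x]_d\}$ in ${\rm gr}_{\nu_{\beta_0}}(K[x])$ and ${\rm gr}_{\nu_\gamma}(K[x])$ is in fact true, but you have only asserted it. To prove it one must show that for $f,g\in K[x]_d$, writing $fg=aQ_{\beta_0}+c=a'Q_\gamma+c'$ with $c,c'\in K[x]_d$, the remainders satisfy $\nu(c-c')>\nu(fg)$, so that $\inv_\nu(c)=\inv_\nu(c')$ and the two twisted products agree. This does follow from $\nu(Q_\gamma-Q_{\beta_0})\geq\gamma_{\beta_0}$ together with two applications of Lemma \ref{primeiraparteleloup}(ii) (giving $\nu(a)+\gamma_{\beta_0}>\nu(fg)$ and $\nu(a')+\gamma_\gamma>\nu(fg)$), via the decomposition $c'-c=(a-a')Q_\gamma+a(Q_{\beta_0}-Q_\gamma)$. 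So the argument closes, but it costs roughly as much as the entire proof in the paper. Unless you have another use for the ring identification, the single-step division by $Q_{\beta_0}$ is the efficient way in.
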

\begin{proof}
Assume that $\nu_\alpha(f)=\nu_\beta(f)$ for some $\alpha,\beta\in A$. We claim that $\alpha$ and $\beta$ can be choosen in a way that
\begin{equation}\label{equantosbreaugmevalutin}
\nu_\beta=[\nu_\alpha;\nu_\beta(Q_\beta)=\gamma_\beta].
\end{equation}
Indeed, for every $\alpha'\in A$ with $\alpha\leq \alpha'\leq \beta$, we have
\[
\nu_\alpha(f)\leq \nu_{\alpha'}(f)\leq \nu_\beta(f)=\nu_\alpha(f)
\]
and hence the equality holds everywhere. If $\alpha\geq \beta_-$, then the fact that $\mathcal F$ is a family of iterated valuations implies that \eqref{equantosbreaugmevalutin} is satisfied. If $\alpha< \beta_-$, we replace $\alpha$ by $\beta_-$ and also have \eqref{equantosbreaugmevalutin}.

We will show that for every $\beta'>\beta$ we have $\nu_{\beta'}(f)=\nu_\beta(f)$. Write $f=aQ_\beta+r$ with $\deg(r)<\deg(Q_\beta)$. Then by Lemma \ref{aeququenaoepsjracont} \textbf{(ii)} we have
\begin{equation}\label{eugostosdeisa1}
\nu_\alpha(aQ_\beta)\geq \nu_\alpha(f).
\end{equation}
Also, since $\nu_\beta=[\nu_\alpha;\nu_\beta(Q_\beta)=\gamma_\beta]$ and $\nu_\alpha\leq\nu_\beta$, we have
\begin{equation}\label{eugostosdeisa2}
\nu_\beta(aQ_\beta)>\nu_\alpha(aQ_\beta).
\end{equation}
Puting \eqref{eugostosdeisa1} and \eqref{eugostosdeisa2} together we obtain
\begin{equation}\label{maisnumaehqojsjdhka}
\nu_\beta(f-r)=\nu_\beta(aQ_\beta)>\nu_\alpha(aQ_\beta)\geq \nu_\alpha(f)=\nu_\beta(f).
\end{equation}
In particular, $\nu_\beta(r)=\nu_\beta(f)$. Since $\deg(r)<\deg(Q_\beta)=\deg(Q_{\beta'})$ we have
\begin{equation}\label{eugostosdeisa3}
\nu(r)=\nu_{\beta}(r)=\nu_{\beta'}(r).
\end{equation}
Hence, by \eqref{maisnumaehqojsjdhka}, \eqref{eugostosdeisa3} and the fact that $\nu_\beta\leq\nu_{\beta'}$, we have
\[
\nu_{\beta'}(f-r)\geq\nu_\beta(f-r)>\nu_\beta(f)=\nu_{\beta'}(r)
\]
and consequently
\[
\nu_{\beta'}(f)=\nu_{\beta'}(r)=\nu_\beta(r)=\nu_\beta(f).
\]
\end{proof}

For a continued family of iterated valuations $\mathcal F$, we define the set
\[
\overline{\Phi}(\mathcal F):=\{f\in K[x]\mid \nu_\alpha(f)<\nu_\beta(f)\mbox{ for every }\alpha,\beta\in A\mbox{ with }\alpha<\beta\}.
\]
and
\[
\mathcal C(\mathcal F):=\{f\in K[x]\mid \exists \alpha_f\in A\mbox{ such that }\nu_\alpha(f)=\nu_{\alpha_f}(f),\forall\alpha\geq\alpha_f\}.
\]
\begin{Obs}
Lemmma \ref{lemaesquscpamimport} tells us that if $\mathcal F$ is a continued family of iterated valuations, with $\nu_\alpha\leq\nu_\beta$ if $\alpha<\beta$, then
\[
K[x]= \overline{\Phi}(\mathcal F)\sqcup\mathcal C(\mathcal F)
\]
\end{Obs}

\begin{Cor}\label{corquepareceprimosnaksk}
Under the assumptions of Lemma \ref{lemaesquscpamimport}, if $fg\in \overline{\Phi}(\mathcal F)$, then $f\in \overline{\Phi}(\mathcal F)$ or $g\in \overline{\Phi}(\mathcal F)$.
\end{Cor}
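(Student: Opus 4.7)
The plan is to prove the contrapositive: if both $f,g \in \mathcal{C}(\mathcal{F})$, then $fg \in \mathcal{C}(\mathcal{F})$. By the remark preceding the statement, under the assumptions of Lemma \ref{lemaesquscpamimport} we have the disjoint decomposition $K[x] = \overline{\Phi}(\mathcal{F}) \sqcup \mathcal{C}(\mathcal{F})$, so establishing the contrapositive is equivalent to the claim.

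Suppose $f,g \in \mathcal{C}(\mathcal{F})$. By definition, there exist indices $\alpha_f, \alpha_g \in A$ such that $\nu_\alpha(f) = \nu_{\alpha_f}(f)$ for every $\alpha \geq \alpha_f$, and $\nu_\alpha(g) = \nu_{\alpha_g}(g)$ for every $\alpha \geq \alpha_g$. Since $A$ is totally ordered, put $\alpha_0 := \max\{\alpha_f, \alpha_g\}$. Then for every $\alpha \in A$ with $\alpha \geq \alpha_0$, both $\nu_\alpha(f)$ and $\nu_\alpha(g)$ are independent of $\alpha$.

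Now, each $\nu_\alpha$ is a valuation on $K[x]$ and in particular satisfies property \textbf{(V1)}, so
\[
\nu_\alpha(fg) = \nu_\alpha(f) + \nu_\alpha(g) = \nu_{\alpha_f}(f) + \nu_{\alpha_g}(g)
\]
for every $\alpha \geq \alpha_0$. This value is constant in $\alpha$, which shows $fg \in \mathcal{C}(\mathcal{F})$, completing the contrapositive.

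There is essentially no obstacle here beyond recognizing that Lemma \ref{lemaesquscpamimport} yields the dichotomy $K[x] = \overline{\Phi}(\mathcal{F}) \sqcup \mathcal{C}(\mathcal{F})$, and that $\mathcal{C}(\mathcal{F})$ is closed under multiplication because, once one is sufficiently far along in the family, the valuations $\nu_\alpha$ agree on $f$ and on $g$ and are multiplicative, forcing agreement on $fg$.
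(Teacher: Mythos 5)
Your proof is correct and takes essentially the same route as the paper: the paper also relies on the disjoint decomposition $K[x] = \overline{\Phi}(\mathcal F) \sqcup \mathcal C(\mathcal F)$ from the remark after Lemma \ref{lemaesquscpamimport} and on the multiplicativity of each $\nu_\alpha$, observing that if both $f$ and $g$ eventually stabilize then $\nu_\alpha(fg)=\nu_\alpha(f)+\nu_\alpha(g)$ stabilizes too. You merely make the contrapositive structure explicit, which the paper leaves implicit in the phrase ``This is impossible if $f,g\in\mathcal C(\mathcal F)$.''
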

\begin{proof}
Assume that $fg\in\overline\Phi(\mathcal F)$. This means that
\[
\nu_\alpha(f)+\nu_\alpha(g)=\nu_\alpha(fg)<\nu_\beta(fg)=\nu_\beta(f)+\nu_\beta(g)
\]
for every $\alpha,\beta\in A$ with $\alpha<\beta$. This is impossible if $f,g\in\mathcal C(\mathcal F)$. Hence $f\in\overline\Phi(\mathcal F)$ or $g\in\overline\Phi(\mathcal F)$.
\end{proof}

Let $d:=d(\mathcal F)$ be the smallest degree of a polynomial in $\overline{\Phi}(\mathcal F)$ and
\[
\Phi(\mathcal F):=\{q\in \overline{\Phi}(\mathcal F)\mid \deg(q)=d\}.
\]
In particular, $K[x]_d\subseteq\mathcal C(F)$. We can define the map
\[
\nu_{\mathcal F}:\mathcal C(\mathcal F)\lra \Gamma\mbox{ by }\nu_{\mathcal F}(f):=\nu_{\alpha_f}(f).
\]

\begin{Obs}\label{maksfinitoskesmoaf}
For a finite number of polynomials $f_1,\ldots,f_n\in \mathcal C(\mathcal F)$ we have
\[
\nu_{\mathcal F}(f_i)=\nu_\alpha(f_i)\mbox{ for every }i,1\leq i\leq n,\mbox{ for }\alpha=\max_{1\leq i\leq n}\{\alpha_{f_i}\}.
\]
If $f,g\in K[x]_d$, then $f,g,f+g\in K[x]_d$ and $fg\in \mathcal C(\mathcal F)$. Hence, there exists $\alpha$ such that
\[
\nu_{\mathcal F}(f)=\nu_\alpha(f), \nu_{\mathcal F}(g)=\nu_\alpha(g), \nu_{\mathcal F}(f+g)=\nu_\alpha(f+g)\mbox{ and }\nu_{\mathcal F}(fg)=\nu_\alpha(fg).
\]
Therefore,
\[
\nu_{\mathcal{F}}(f+g)\geq\min\{\nu_{\mathcal{F}}(f),\nu_{\mathcal{F}}(g)\}\mbox{ and }\nu_{\mathcal F}(fg)=\nu_{\mathcal F}(f)+\nu_{\mathcal F}(g).
\]
\end{Obs}

\begin{Def}
A monic polynomial $Q\in K[x]$ is said to be a \textbf{limit MacLane-Vaqui\'e key polynomial} for the continued family of iterated valuations $\mathcal F=\{(\nu_\alpha,Q_\alpha,\gamma_\alpha)\}_{\alpha\in A}$ if $Q$ has the following properties:
\begin{description}
\item[(LKP1)] $Q$ is $\mathcal F$-irreducible, i.e., for $f,g\in K[x]$, if $Q\mid_{\mathcal F}fg$, then $Q\mid_{\mathcal F} f$ or $Q\mid_{\mathcal F}g$.
\item[(LKP2)] $Q$ is $\mathcal F$-minimal, i.e., for $f\in K[x]$, if $Q\mid_{\mathcal F}f$, then $\deg(Q)\leq \deg(f)$.
\end{description} 
\end{Def}
The next result gives us a criterium to find limit key polynomials.
\begin{Prop}\label{Propositibnkelimivaquie}
Assume that $\mathcal F$ is a continued family of iterated valuations, with $\nu_\alpha\leq\nu_\beta$ if $\alpha<\beta$, and $Q\in \Phi(\mathcal F)$ is a monic polynomial. Then $Q$ is a limit MacLane-Vaqui\'e key polynomial for $\mathcal F$.
\end{Prop}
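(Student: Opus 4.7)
The plan is to reduce both properties to a single equivalence, namely
\[
Q \mid_{\mathcal F} f \iff f \in \overline\Phi(\mathcal F) \cup \{0\}
\]
for every $f \in K[x]$. Granting this, \textbf{(LKP2)} is immediate: any nonzero $f \in \overline\Phi(\mathcal F)$ satisfies $\deg(f) \ge d = \deg(Q)$ by the definition of $\Phi(\mathcal F)$. For \textbf{(LKP1)}, if $Q \mid_{\mathcal F} fg$ with $fg \ne 0$, the $\Rightarrow$ direction puts $fg$ in $\overline\Phi(\mathcal F)$, Corollary \ref{corquepareceprimosnaksk} puts $f$ or $g$ in $\overline\Phi(\mathcal F)$, and the $\Leftarrow$ direction promotes this back to $\mathcal F$-divisibility.

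For the forward direction, suppose $Q \mid_{\mathcal F} f$ with $f \ne 0$ and, for contradiction, $f \in \mathcal C(\mathcal F)$. Choose $\alpha_0$ large enough so that $Q \mid_{\nu_{\alpha_0}} f$ and $\nu_\alpha(f) = \nu_{\mathcal F}(f)$ for all $\alpha \ge \alpha_0$, and fix $h \in K[x]$ with $f \sim_{\nu_{\alpha_0}} Qh$. For every $\beta \ge \alpha_0$,
\[
\nu_\beta(f - Qh) \ge \nu_{\alpha_0}(f - Qh) > \nu_{\mathcal F}(f) = \nu_\beta(f),
\]
which pins down $\nu_\beta(Qh) = \nu_{\mathcal F}(f)$, so $\nu_\beta(h) = \nu_{\mathcal F}(f) - \nu_\beta(Q)$. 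Since $Q \in \overline\Phi(\mathcal F)$, $\nu_\beta(Q)$ is strictly increasing in $\beta$, making $\nu_\beta(h)$ strictly decreasing. This contradicts $\nu_{\alpha_0}(h) \le \nu_\beta(h)$, forcing $f \in \overline\Phi(\mathcal F)$.

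For the reverse direction, let $f \in \overline\Phi(\mathcal F)$ and use Euclidean division to write $f = aQ + r$ with $\deg(r) < d$. If $r = 0$ then $f = aQ$ and $Q \mid_{\nu_\alpha} f$ trivially. Otherwise $a \ne 0$ (else $f = r \in K[x]_d \subseteq \mathcal C(\mathcal F)$, contradicting $f \in \overline\Phi(\mathcal F)$), and for large $\alpha$ we have $\nu_\alpha(r) = c$ constant (since $r \in \mathcal C(\mathcal F)$) while $\nu_\alpha(aQ) = \nu_\alpha(a) + \nu_\alpha(Q)$ is strictly increasing (since $\nu_\alpha(Q)$ strictly increases and $\nu_\alpha(a)$ is non-decreasing). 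A strictly increasing family meets a constant at most once, so eventually either $\nu_\alpha(aQ) < c$ or $\nu_\alpha(aQ) > c$; the latter would give $\nu_\alpha(f) = \nu_\alpha(aQ + r) = c$ constant, contradicting $f \in \overline\Phi(\mathcal F)$. Hence for all large $\alpha$, $\nu_\alpha(aQ) < c = \nu_\alpha(r)$, and
\[
\nu_\alpha(f - aQ) = \nu_\alpha(r) > \nu_\alpha(aQ) = \nu_\alpha(f),
\]
so $f \sim_{\nu_\alpha} aQ$ and $Q \mid_{\nu_\alpha} f$, which gives $Q \mid_{\mathcal F} f$.

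The main obstacle is the $\Leftarrow$ direction: a priori $\nu_\alpha(aQ)$ could sit around $\nu_\alpha(r)$ in complicated ways, blocking the direct comparison needed to exhibit $f \sim_{\nu_\alpha} aQ$. The crucial simplification is that strict monotonicity of $\nu_\alpha(aQ)$ against the stabilized value $c$ forces a definite eventual comparison, and the hypothesis $f \in \overline\Phi(\mathcal F)$ then excludes the unfavorable case.
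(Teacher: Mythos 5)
Your proposal is correct and follows essentially the same route as the paper: the equivalence $Q\mid_{\mathcal F}f\Leftrightarrow f\in\overline\Phi(\mathcal F)$ you establish is precisely the content of Lemma \ref{Lemaondevqquieminti}, and the deduction of \textbf{(LKP1)} from Corollary \ref{corquepareceprimosnaksk} matches the paper's. The only differences are cosmetic (tracking $\nu_\beta(h)$ rather than $\nu_\beta(aQ)$ in the forward direction, and a monotonicity argument in place of the paper's \textbf{(V2)} contradiction in the converse) plus your more careful handling of the edge cases $a=0$, $r=0$.
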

We will need the following lemma.
\begin{Lema}\label{Lemaondevqquieminti}
Assume that $\mathcal F$ is a continued family of iterated valuations with $\nu_\alpha\leq\nu_\beta$ if $\alpha<\beta$. If $Q\in\Phi(\mathcal F)$, then for $f\in K[x]$, we have
\[
Q\mid_{\mathcal F}f\Llr f\in\overline\Phi(\mathcal F).
\]
\end{Lema}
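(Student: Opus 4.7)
My plan is to prove both directions via the Euclidean division $f = aQ + r$ with $\deg r < d := \deg Q$, exploiting that $Q \in \Phi(\mathcal F) \subseteq \overline\Phi(\mathcal F)$ forces $\nu_\alpha(Q)$ to be strictly increasing in $\alpha$, while any polynomial of degree less than $d$ lies in $K[x]_d \subseteq \mathcal C(\mathcal F)$ and therefore has $\nu_\alpha$-value eventually constant.

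For the direction $f \in \overline\Phi(\mathcal F) \Rightarrow Q\mid_{\mathcal F} f$, the minimality of $d$ in $\overline\Phi(\mathcal F)$ forces $a \neq 0$, since otherwise $f = r$ would have degree less than $d$. The case $r = 0$ is immediate, as then $f = aQ$ is divisible by $Q$ on the nose in every $\nu_\alpha$. When $r \neq 0$, $\nu_\alpha(aQ) = \nu_\alpha(a) + \nu_\alpha(Q)$ is strictly increasing while $\nu_\alpha(r)$ stabilizes; if $\nu_\alpha(aQ)$ ever overtook $\nu_\alpha(r)$, then $\nu_\alpha(f) = \nu_\alpha(r)$ would be eventually constant, contradicting $f \in \overline\Phi(\mathcal F)$. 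Hence eventually $\nu_\alpha(aQ) < \nu_\alpha(r)$, so $\nu_\alpha(f - aQ) = \nu_\alpha(r) > \nu_\alpha(aQ) = \nu_\alpha(f)$, i.e., $f \sim_{\nu_\alpha} aQ$, giving $Q \mid_{\nu_\alpha} f$ for all large $\alpha$.

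For the converse I plan to argue by contradiction. Assume $Q \mid_{\mathcal F} f$ but $f \in \mathcal C(\mathcal F)$, so $\nu_\alpha(f) = \nu_*$ is eventually constant. Choose $\alpha$ large enough that this and $Q \mid_{\nu_\alpha} f$ both hold, and fix $h_\alpha$ realizing $\nu_\alpha(f - Qh_\alpha) > \nu_\alpha(Qh_\alpha) = \nu_*$. For any $\beta > \alpha$, the strict inequality $\nu_\beta(Q) > \nu_\alpha(Q)$ combined with $\nu_\beta(h_\alpha) \geq \nu_\alpha(h_\alpha)$ yields $\nu_\beta(Qh_\alpha) > \nu_*$, which in turn forces $\nu_\beta(f - Qh_\alpha) = \nu_\beta(f) = \nu_*$. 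Yet monotonicity also gives $\nu_\beta(f - Qh_\alpha) \geq \nu_\alpha(f - Qh_\alpha) > \nu_*$, a contradiction. The main temptation I would want to resist is to analyze the Euclidean quotient of $f$ by $Q$ in this direction as well; in fact no information about $h_\alpha$ is needed beyond its existence, since the strict growth of $\nu_\alpha(Q)$ played against the stabilization of $\nu_\alpha(f)$ closes the argument by itself.
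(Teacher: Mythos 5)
Your proof is correct and follows essentially the same route as the paper's: both directions run through the division $f=aQ+r$ with $\deg r<\deg Q$, exploiting that $Q\in\overline\Phi(\mathcal F)$ makes $\nu_\alpha(aQ)$ strictly increasing while $r\in K[x]_d\subseteq\mathcal C(\mathcal F)$ makes $\nu_\alpha(r)$ eventually constant, and the converse is handled by playing the strict growth of $\nu_\alpha(Qh_\alpha)$ against the stabilization of $\nu_\alpha(f)$. The only cosmetic difference is in how the contradiction is phrased (the paper contradicts the ultrametric inequality, respectively contradicts strict growth of $\nu_\alpha(aQ)$; you contradict $f\in\overline\Phi(\mathcal F)$, respectively derive $\nu_\beta(f-Qh_\alpha)=\nu_*$ and $>\nu_*$ simultaneously), but these are reformulations of the same estimates.
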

\begin{proof}
Take a polynomial $f$ with $Q\mid_{\mathcal F}f$ and suppose that $f\in \mathcal C(\mathcal F)$. Since $Q\mid_{\mathcal F}f$, there exists $\alpha_0\in A$ such that $Q\mid_{\nu_\alpha} f$ for every $\alpha\geq \alpha_0$. For $\alpha>\max\{\alpha_0,\alpha_f\}$, there exists $a\in K[x]$ such that
\[
\nu_\alpha(f-aQ)>\nu_\alpha(f)=\nu_\alpha(aQ).
\]
Take $\beta\in A$ with $\beta>\alpha$. Then
\[
\nu_\beta(f-aQ)\geq \nu_\alpha(f-aQ)>\nu_\alpha(f)=\nu_\beta(f)
\]
and consequently
\[
\nu_\beta(aQ)=\nu_\beta(f)=\nu_\alpha(f)=\nu_\alpha(aQ)
\]
contradicting $Q\in \Phi(\mathcal F)$.

For the converse, assume that $f\in \overline \Phi(\mathcal F)$. Write $f=aQ+r$ with $\deg(r)<\deg(Q)$. Since $\deg(r)<\deg(Q)$, we have $r\in\mathcal C(\mathcal F)$. We claim that, for every $\alpha\in A$ with $\alpha\geq\alpha_r$, we have
\begin{equation}\label{equaqbnaosutil}
\nu_\alpha(f-aQ)=\nu_\alpha(r)>\min\{\nu_\alpha(f),\nu_\alpha(aQ)\}
\end{equation}
and consequently $Q\mid_{\mathcal F} f$. Indeed, if $\nu_\alpha(r)\leq \min\{\nu_\alpha(f),\nu_\alpha(aQ)\}$, then for $\beta>\alpha$ we would have
\[
\nu_\beta(r)=\nu_\alpha(r)\leq \min\{\nu_\alpha(f),\nu_\alpha(aQ)\}<\min\{\nu_\beta(f),\nu_\beta(aQ)\}
\]
and this is a contradiction to fact that $\nu_\beta$ is a valuation.
\end{proof}

\begin{proof}[Proof of Proposition \ref{Propositibnkelimivaquie}]
Take $f\in K[x]$. If $Q\mid_{\mathcal F}f$, then by the previous lemma, $f\in\overline\Phi(\mathcal F)$. Since $Q\in\Phi(\mathcal F)$, we have $\deg(Q)\leq\deg(f)$. Hence, $Q$ satisfies \textbf{(LKP2)}.

Assume now that $Q\mid_{\mathcal F}fg$. Then, by Lemma \ref{Lemaondevqquieminti}, $fg\in\overline\Phi(\mathcal F)$. By Corollary \ref{corquepareceprimosnaksk}, this implies that $f\in\overline\Phi(\mathcal F)$ or $g\in\overline\Phi(\mathcal F)$ and again by Lemma \ref{Lemaondevqquieminti} we obtain that $Q\mid_{\mathcal F}f$ or $Q\mid_{\mathcal F}g$. Therefore, $Q$ is a limit MacLane-Vaqui\'e key polynomial for $\mathcal F$.
\end{proof}

\begin{Teo}\label{Theoremsobreexntvalaugvaeui}
Let $\mathcal F$ be a continued family of iterated valuations and $Q$ a limit key polynomial for $\nu$, with $d=\deg(Q)\leq d(\mathcal F)$. Take $\gamma$ in some extension of $\Gamma$ such that $\gamma>\nu_\alpha(Q)$ for every $\alpha\in A$. Define
\[
\overline{\nu}(f)=\min\{\nu_{\mathcal F}(f_i)+i\gamma\}
\]
where $f=f_0+f_1Q+\ldots+f_rQ^r$ is the $Q$-expansion of $f$. Then $\overline{\nu}$ is a valuation on $K[x]$.
\end{Teo}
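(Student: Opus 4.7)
Plan. The plan is to invoke Theorem \ref{lemasobreooutrocoiso} with $S:=\mathcal{C}(\mathcal{F})$, $\mu:=\nu_{\mathcal{F}}$, $q:=Q$ and $n:=d$. The resulting $\mu'$ then coincides with the map $\overline{\nu}$, because each coefficient $f_i$ of a $Q$-expansion has degree less than $d$, and the hypothesis $d\leq d(\mathcal{F})$ forces $K[x]_d\subseteq \mathcal{C}(\mathcal{F})$: elements of degree strictly less than $d(\mathcal{F})$ cannot lie in $\overline{\Phi}(\mathcal{F})$.

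Verifying the structural hypotheses of Theorem \ref{lemasobreooutrocoiso} is straightforward. Remark \ref{maksfinitoskesmoaf}, applied to finitely many elements at a time, shows at once that $\mathcal{C}(\mathcal{F})$ is closed under multiplication, that $\nu_{\mathcal{F}}$ satisfies \textbf{(V2)} on $\mathcal{C}(\mathcal{F})$, and that condition \textbf{(i)} of Theorem \ref{lemasobreooutrocoiso} holds for any $\bar f,\bar g\in K[x]_d$. In each case the recipe is the same: pick a single $\alpha\in A$ large enough that $\nu_\alpha$ agrees with $\nu_{\mathcal{F}}$ on every polynomial presently in play, and then exploit that $\nu_\alpha$ itself is a valuation.

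The crux is condition \textbf{(ii)}. Given $\bar f,\bar g\in K[x]_d$ with $\bar f\bar g=aQ+c$, $a,c\in K[x]_d$, the interesting case is $a\neq 0$. Choose $\alpha_0\in A$ so that $\nu_\alpha$ agrees with $\nu_{\mathcal{F}}$ on $\bar f\bar g$, $a$ and $c$ for every $\alpha\geq\alpha_0$. The hypothesis $\gamma>\nu_\alpha(Q)$ already yields
\[
\nu_\alpha(aQ)=\nu_{\mathcal{F}}(a)+\nu_\alpha(Q)<\nu_{\mathcal{F}}(a)+\gamma,\qquad\alpha\geq\alpha_0,
\]
so the strict inequality in \textbf{(ii)} will follow once I establish $\nu_{\mathcal{F}}(\bar f\bar g)=\nu_{\mathcal{F}}(c)$. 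For this I use the limit-key-polynomial property of $Q$: it places $Q$ in $\Phi(\mathcal{F})\subseteq\overline{\Phi}(\mathcal{F})$, so the values $\nu_\alpha(Q)$ strictly grow with $\alpha$, and hence so do $\nu_\alpha(aQ)$. Because $\bar f\bar g=aQ+c$ is stable in $\mathcal{C}(\mathcal{F})$ while $\nu_\alpha(c)$ is constant, we cannot have $\nu_\alpha(aQ)\leq\nu_{\mathcal{F}}(c)$ for all large $\alpha$, for then $\nu_\alpha(aQ+c)=\nu_\alpha(aQ)$ would inherit the strict growth of $\nu_\alpha(aQ)$, contradicting $\bar f\bar g\in\mathcal{C}(\mathcal{F})$. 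Hence $\nu_\alpha(aQ)>\nu_{\mathcal{F}}(c)$ for $\alpha$ large, forcing $\nu_\alpha(\bar f\bar g)=\nu_\alpha(c)$, whence $\nu_{\mathcal{F}}(\bar f\bar g)=\nu_{\mathcal{F}}(c)$.

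The main obstacle is this last step, which requires correctly exploiting the mechanism by which the limit key polynomial $Q$ makes $\nu_\alpha(Q)$, and so $\nu_\alpha(aQ)$, strictly grow while the product $\bar f\bar g$ stabilises. Once \textbf{(ii)} is in hand, Theorem \ref{lemasobreooutrocoiso} supplies \textbf{(V1)} and \textbf{(V2)} for $\overline{\nu}$, and \textbf{(V3)} is immediate from the definition of $\overline{\nu}$.
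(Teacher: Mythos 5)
Your overall plan (invoke Theorem \ref{lemasobreooutrocoiso} with $\mu=\nu_{\mathcal F}$, $q=Q$, and verify conditions \textbf{(i)}, \textbf{(ii)} using Remark \ref{maksfinitoskesmoaf}) matches the paper's proof. You also correctly identify that the only real work lies in condition \textbf{(ii)}, i.e.\ in showing $\nu_{\mathcal F}(\bar f\bar g)=\nu_{\mathcal F}(c)<\nu_{\mathcal F}(a)+\gamma$. But your argument for this step does not match the paper's, and as written it has a gap.

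The gap is the unsupported assertion that the limit MacLane-Vaqui\'e property places $Q$ in $\Phi(\mathcal F)$, hence in $\overline{\Phi}(\mathcal F)$, so that $\nu_\alpha(Q)$ (and consequently $\nu_\alpha(aQ)$) grows strictly in $\alpha$. The hypothesis of the theorem is only that $Q$ satisfies \textbf{(LKP1)} and \textbf{(LKP2)} and that $\deg(Q)\leq d(\mathcal F)$. Proposition \ref{Propositibnkelimivaquie} gives the implication $Q\in\Phi(\mathcal F)\Rightarrow Q$ is a limit MacLane-Vaqui\'e key polynomial, but the converse is not established anywhere in the paper, and the inequality $\deg(Q)\leq d(\mathcal F)$ explicitly allows $\deg(Q)<d(\mathcal F)$, in which case $Q\notin\overline{\Phi}(\mathcal F)$ (so $Q\in\mathcal C(\mathcal F)$ and $\nu_\alpha(Q)$ stabilises). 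In that case the engine of your argument --- the strict growth of $\nu_\alpha(aQ)$ while $\nu_\alpha(\bar f\bar g)$ and $\nu_\alpha(c)$ stabilise --- simply does not run.

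The paper avoids this issue by proving condition \textbf{(ii)} from \textbf{(LKP1)} and \textbf{(LKP2)} directly, via the same contradiction scheme as Lemma \ref{primeiraparteleloup}: if $\nu_{\mathcal F}(\bar f\bar g)=\nu_{\mathcal F}(c)<\nu_{\mathcal F}(a)+\gamma$ fails, one chooses $\alpha_0$ large enough that $\nu_\alpha$ agrees with $\nu_{\mathcal F}$ on the relevant polynomials and deduces $\max\{\nu_{\mathcal F}(c),\nu_{\mathcal F}(\bar f\bar g)\}>\nu_\alpha(aQ)$ for all $\alpha\geq\alpha_0$; depending on which term realises the maximum, this forces $Q\mid_{\mathcal F}c$ or $Q\mid_{\mathcal F}\bar f\bar g$, and then \textbf{(LKP1)} gives $Q\mid_{\mathcal F}\bar f$ or $Q\mid_{\mathcal F}\bar g$; in every branch \textbf{(LKP2)} is violated because $\deg(c),\deg(\bar f),\deg(\bar g)<\deg(Q)$. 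That route uses exactly the hypotheses given and never needs $Q\in\Phi(\mathcal F)$. To repair your proof you would either have to restrict the theorem to $Q\in\Phi(\mathcal F)$ (which is the case in the application in Theorem \ref{maintheomrlimitvsvauqiemabca}, but is weaker than the stated result), or replace the strict-growth argument by the \textbf{(LKP1)}/\textbf{(LKP2)} contradiction as in the paper.
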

We will need the following lemma (which is the equivalent of Lemma \ref{primeiraparteleloup} for an iterated family of valuations).

\begin{Lema}
For $f,g\in K[x]_d$, if $fg=aQ+r$ with $\deg(r)<\deg(Q)$, then
\begin{equation}\label{eqsobreeunemseimaisidjl}
\nu_{\mathcal F}(fg)=\nu_{\mathcal F}(r)<\nu_{\mathcal F}(a)+\gamma.
\end{equation}
\end{Lema}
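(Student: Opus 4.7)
The plan is to mimic the proof of Lemma \ref{primeiraparteleloup} but carry it out uniformly along a cofinal tail of the family, transferring divisibilities from the individual $\nu_\beta$'s to the limit divisibility $\mid_{\mathcal F}$ required by (LKP1) and (LKP2). The first task is to line up the stabilizations. Since $d=\deg(Q)\leq d(\mathcal F)$, we have $f,g,r\in K[x]_d\subseteq K[x]_{d(\mathcal F)}\subseteq \mathcal C(\mathcal F)$; the degree of $a$ satisfies $\deg(a)\leq \deg(fg)-d<d$, so also $a\in\mathcal C(\mathcal F)$; and $fg\in\mathcal C(\mathcal F)$ by the contrapositive of Corollary \ref{corquepareceprimosnaksk} (using the dichotomy $K[x]=\overline{\Phi}(\mathcal F)\sqcup\mathcal C(\mathcal F)$ from the remark after Lemma \ref{lemaesquscpamimport}). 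By Remark \ref{maksfinitoskesmoaf} we can fix one $\alpha\in A$ such that $\nu_\beta(h)=\nu_{\mathcal F}(h)$ for every $\beta\geq\alpha$ and every $h\in\{f,g,fg,a,r\}$.

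Next I would establish the equality $\nu_{\mathcal F}(fg)=\nu_{\mathcal F}(r)$ by contradiction. Assume $\nu_{\mathcal F}(fg)>\nu_{\mathcal F}(r)$. For each $\beta\geq\alpha$ the identity $aQ=fg-r$ together with $\nu_\beta(fg)>\nu_\beta(r)$ forces $\nu_\beta(aQ)=\nu_\beta(r)$ and $\inv_{\nu_\beta}(aQ)=-\inv_{\nu_\beta}(r)$, so $aQ\sim_{\nu_\beta}-r$, i.e.\ $Q\mid_{\nu_\beta}r$. As this holds for all $\beta\geq\alpha$, we get $Q\mid_{\mathcal F}r$, contradicting (LKP2) since $\deg(r)<d$. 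Assuming instead $\nu_{\mathcal F}(fg)<\nu_{\mathcal F}(r)$, the same identity gives $\nu_\beta(aQ)=\nu_\beta(fg)$ and $\inv_{\nu_\beta}(aQ)=\inv_{\nu_\beta}(fg)$, so $Q\mid_{\mathcal F}fg$; by (LKP1) then $Q\mid_{\mathcal F}f$ or $Q\mid_{\mathcal F}g$, and (LKP2) contradicts $\deg(f),\deg(g)<d$.

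For the strict inequality $\nu_{\mathcal F}(fg)<\nu_{\mathcal F}(a)+\gamma$, suppose instead $\nu_{\mathcal F}(fg)\geq\nu_{\mathcal F}(a)+\gamma$. Using the hypothesis $\gamma>\nu_\beta(Q)$ for every $\beta$, any $\beta\geq\alpha$ yields
\[
\nu_\beta(r)=\nu_{\mathcal F}(r)=\nu_{\mathcal F}(fg)\geq \nu_{\mathcal F}(a)+\gamma>\nu_\beta(a)+\nu_\beta(Q)=\nu_\beta(aQ),
\]
so $\nu_\beta(fg)=\nu_\beta(aQ+r)=\nu_\beta(aQ)<\nu_\beta(r)=\nu_\beta(fg)$, a contradiction.

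The main obstacle is the bookkeeping in the first step: the divisibility that yields a contradiction must be read off at the level of each individual $\nu_\beta$ and then assembled into the limit divisibility $Q\mid_{\mathcal F}\cdot$ needed to invoke (LKP1) and (LKP2). Once the stabilization from Remark \ref{maksfinitoskesmoaf} is in place and the closure of $\mathcal C(\mathcal F)$ under products (via Corollary \ref{corquepareceprimosnaksk}) has been used to place $fg$ in $\mathcal C(\mathcal F)$, both claims follow by the same elementary graded-algebra manipulation that underlies Lemma \ref{primeiraparteleloup}.
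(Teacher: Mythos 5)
Your proof is correct and takes essentially the same route as the paper: stabilize the relevant values along a cofinal tail of $A$ via Remark \ref{maksfinitoskesmoaf}, and then use \textbf{(LKP1)} and \textbf{(LKP2)} to derive contradictions from the possible failures. You organize it a bit more cleanly — proving the equality $\nu_{\mathcal F}(fg)=\nu_{\mathcal F}(r)$ first and then getting the strict inequality by a direct computation that no longer needs the (LKP) axioms — and you explicitly justify $fg\in\mathcal C(\mathcal F)$ via Corollary \ref{corquepareceprimosnaksk}, a step the paper leaves implicit when it writes $\alpha_{fg}$.
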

\begin{proof}
Suppose, aiming for a contradiction, that \eqref{eqsobreeunemseimaisidjl} is not satisfied. Then there exists $\alpha_0\in A$ such that $\nu_\alpha(fg)=\nu_{\mathcal F}(fg)$, $\nu_\alpha(r)=\nu_{\mathcal F}(r)$, and
\begin{equation}\label{ainbdamaisdoidaeenex1}
\max\{\nu_{\mathcal F}(r),\nu_{\mathcal F}(fg)\}>\nu_\alpha(aQ)\mbox{ for every }\alpha\geq\alpha_0.
\end{equation}
Indeed, if
\[
\max\{\nu_{\mathcal F}(r),\nu_{\mathcal F}(fg)\}\geq \nu_{\mathcal F}(a)+\gamma,
\]
then we have \eqref{ainbdamaisdoidaeenex1} because 
\[
\nu_{\mathcal F}(a)=\nu_\alpha(a)\mbox{ and }\gamma>\nu_\alpha(Q)
\]
for every $\alpha\geq\alpha_a$. On the other hand, if $\nu_{\mathcal F}(r)\neq\nu_{\mathcal F}(fg)$, then for every $\alpha\geq \max\{\alpha_{fg},\alpha_r\}$ we have
\[
\nu_\alpha(r)=\nu_{\mathcal F}(r)\neq\nu_{\mathcal F}(fg)=\nu_\alpha(fg)
\]
and consequently
\[
\max\{\nu_{\mathcal F}(r),\nu_{\mathcal F}(fg)\}=\max\{\nu_\alpha(r),\nu_\alpha(fg)\}>\min\{\nu_\alpha(r),\nu_\alpha(fg)\}=\nu_\alpha(aQ).
\]

We can assume that $\alpha_0\geq \alpha_r,\alpha_{fg}$. If $\nu_{\mathcal F}(fg)=\nu_\alpha(fg)> \nu_\alpha(aQ)$, then
\[
\nu_\alpha(aQ)=\nu_\alpha(r)<\nu_\alpha(aQ+r)=\nu_\alpha(fg)
\]
for every $\alpha\geq\alpha_0$. Hence $Q\mid_{\mathcal F} r$. Analogously, if $\nu_{\mathcal F}(r)>\nu_\alpha(aQ)$ for every $\alpha\geq\alpha_0$, then $Q\mid_{\mathcal F} fg$ and since $Q$ satisfies \textbf{(LKP1)} we conclude that $Q\mid_{\mathcal F}f$ or $Q\mid_{\mathcal F} g$. In each case we obtain a contradiction, because $\max\{\deg(r),\deg(f),\deg(g)\}<\deg(Q)$ and $Q$ satisfies \textbf{(LKP2)}.
\end{proof}

\begin{proof}
Property \textbf{(V3)} follows by definition. By Remark \ref{maksfinitoskesmoaf}, the assumptions of Lemma \ref{Lemasobreextsvelidesitra} are satisfied for $\mu=\nu_{\mathcal F}$ and consequently \textbf{(V2)} is satisfied for $\overline\nu$.

Again by Remark \ref{maksfinitoskesmoaf} the condition \textbf{(i)} of Lemma \ref{lemasobreooutrocoiso} is satisfied for $\mu=\nu_{\mathcal F}$. Moreover, by the previous Lemma the condition \textbf{(ii)} of Lemma \ref{lemasobreooutrocoiso} is satisfied for $\mu=\nu_{\mathcal F}$, we obtain that $\mu'=\overline{\nu}$ satisfies \textbf{(V1)}. Theorefore, $\overline\nu$ is a valuation.
\end{proof}

Before ending this section we will discuss when the condition $\nu_\alpha\leq\nu_\beta$ for $\alpha<\beta$ is satisfied. We start with the following proposition.
\begin{Prop}\label{Lemainteressantemasfalo}
Let $\nu$ be a valuation on $K[x]$ and let
\[
\nu_1=[\nu;\nu_1(Q_1)=\gamma_1]\mbox{ and }\nu_2=[\nu_1;\nu_2(Q_2)=\gamma_2]
\]
be augmented valuations and assume that $Q_1\nsim_{\nu_1}Q_2$. If $\deg(Q_1)=\deg(Q_2)$, then $\gamma_2>\gamma_1$ and
\[
\nu_2(Q_1)=\nu_1(Q_2)=\gamma_1=\nu(Q_2-Q_1).
\]
\end{Prop}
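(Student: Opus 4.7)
The strategy is to exploit the fact that, since $Q_1$ and $Q_2$ are both monic of the same degree $n:=\deg(Q_1)$, the polynomial $r:=Q_2-Q_1$ is nonzero (as $Q_1\nsim_{\nu_1}Q_2$ forces $Q_1\neq Q_2$) and has degree strictly less than $n$. Therefore $Q_2=r+1\cdot Q_1$ is the $Q_1$-expansion of $Q_2$, and $Q_1=(-r)+1\cdot Q_2$ is the $Q_2$-expansion of $Q_1$. Using the definitions of the augmented valuations $\nu_1$ and $\nu_2$, together with the fact that $\nu_1$ restricts to $\nu$ on polynomials of degree less than $n$, I immediately obtain
\[
\nu_1(Q_2)=\min\{\nu(r),\gamma_1\}\qquad\text{and}\qquad \nu_2(Q_1)=\min\{\nu_1(r),\gamma_2\}.
\]

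The crux of the proof is to pin down $\nu(r)=\gamma_1$ by ruling out both strict alternatives. If $\nu(r)>\gamma_1$, then $\nu_1(Q_2)=\gamma_1=\nu_1(Q_1)$ and $\inv_{\nu_1}(Q_2)=\inv_{\nu_1}(Q_1)$ (since the extra term $r$ has strictly larger $\nu_1$-value), directly contradicting $Q_1\nsim_{\nu_1}Q_2$. If instead $\nu(r)<\gamma_1$, then $\nu_1(Q_2)=\nu_1(r)$ and $\inv_{\nu_1}(Q_2)=\inv_{\nu_1}(r)$, i.e.\ $Q_2\sim_{\nu_1} r$; this gives $Q_2\mid_{\nu_1}r$ via the witness $h=1$, but $r$ is a nonzero polynomial of degree less than $\deg(Q_2)$, contradicting property \textbf{(KP2)} applied to $Q_2$ as a MacLane-Vaqui\'e key polynomial of $\nu_1$. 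Hence $\nu(r)=\gamma_1$, and consequently $\nu_1(Q_2)=\gamma_1$ as well.

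Once this has been established, the rest is formal. The strict inequality $\gamma_2>\nu_1(Q_2)$ built into the definition of the augmented valuation $\nu_2=[\nu_1;\nu_2(Q_2)=\gamma_2]$ becomes $\gamma_2>\gamma_1$. Substituting into the formula for $\nu_2(Q_1)$ yields $\nu_2(Q_1)=\min\{\gamma_1,\gamma_2\}=\gamma_1$, so all three claimed equalities $\nu_2(Q_1)=\nu_1(Q_2)=\gamma_1=\nu(Q_2-Q_1)$ fall out at once. The main obstacle is the trichotomy analysis on $\nu(r)$ versus $\gamma_1$, where the case $\nu(r)<\gamma_1$ crucially depends on invoking the minimality condition \textbf{(KP2)} for $Q_2$, while the case $\nu(r)>\gamma_1$ uses the $\nu_1$-inequivalence hypothesis; absent either, the argument breaks down.
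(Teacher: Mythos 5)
Your proof is correct and follows essentially the same route as the paper: decompose via $r=Q_2-Q_1$, compute $\nu_1(Q_2)=\min\{\nu(r),\gamma_1\}$ and $\nu_2(Q_1)=\min\{\nu_1(r),\gamma_2\}$, and pin down $\nu(r)=\gamma_1$ by using the inequivalence $Q_1\nsim_{\nu_1}Q_2$ on one side and the minimality property \textbf{(KP2)} of $Q_2$ on the other. The only cosmetic difference is that you organize the argument as a trichotomy on $\nu(r)$ versus $\gamma_1$ and invoke \textbf{(KP2)} directly, whereas the paper first cites Lemma \ref{primeiraparteleloup}\textbf{(i)} (which encapsulates the same \textbf{(KP2)} argument) to get $\nu(r)\geq\gamma_1$ and then uses the inequivalence for the reverse inequality.
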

\begin{proof}
Since $Q_1$ and $Q_2$ are monic polynomials of the same degree, we have that $\deg(h)<\deg(Q_1)=\deg(Q_2)$ where $h=Q_2-Q_1$. In particular,
\[
\nu(h)=\nu_1(h)=\nu_2(h).
\]
Since $Q_2$ is a MacLane-Vaqui\'e key polynomial for $\nu_1$, by Lemma \ref{primeiraparteleloup} \textbf{(i)} (for $f=Q_1$, $aQ=Q_2$ and $r=h$) we have that
\begin{equation}\label{relapodlequanquerosusa}
\nu(h)=\nu_1(h)\geq \nu_1(Q_1)=\gamma_1\mbox{ and }\nu_1(Q_2)\geq \nu_1(Q_1).
\end{equation}
By the definition of $\nu_1$ and by \eqref{relapodlequanquerosusa} we have
\[
\nu_1(Q_2)=\min\{\gamma_1,\nu(h)\}=\gamma_1.
\]
Hence, $\gamma_2>\nu_1(Q_2)=\gamma_1$.

Since $Q_1\nsim_{\nu_1}Q_2$, we have
\[
\nu(h)=\nu_1(h)=\nu(Q_2-Q_1)\leq \nu_1(Q_1)=\gamma_1.
\]
This and \eqref{relapodlequanquerosusa} imply that $\nu_1(h)=\nu(h)=\gamma_1$. Hence,
\[
\nu_2(Q_1)=\nu_2(Q_2-h)=\min\{\gamma_2,\nu_1(h)\}=\gamma_1.
\]
\end{proof}
\begin{Cor}
On the situation of Lemma \ref{Lemainteressantemasfalo} we have that
\[
\nu_2=[\nu;\nu_2(Q_2)=\gamma_2].
\]
\end{Cor}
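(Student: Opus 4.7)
The plan is to verify two things: that the right-hand side makes sense as an augmented valuation (i.e., $Q_2$ is a MacLane-Vaqui\'e key polynomial for $\nu$ and $\gamma_2>\nu(Q_2)$), and that the valuation $[\nu;\nu_2(Q_2)=\gamma_2]$ agrees with $\nu_2$ on every polynomial. The second part should fall out directly from degree considerations, so the bulk of the work is the first part; fortunately, the previous proposition gives us exactly the information needed.

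First I would extract the consequences of Proposition \ref{Lemainteressantemasfalo}. Writing $h=Q_2-Q_1$, we have $\deg(h)<\deg(Q_1)$ and $\nu(h)=\gamma_1$. Since $\gamma_1>\nu(Q_1)$ by the definition of the augmented valuation $\nu_1$, the strict triangle inequality gives $\nu(Q_2)=\nu(Q_1+h)=\nu(Q_1)$, and moreover $\inv_\nu(Q_2)=\inv_\nu(Q_1)$, i.e., $Q_1\sim_\nu Q_2$. In particular, $\nu(Q_2)=\nu(Q_1)<\gamma_1<\gamma_2$, which takes care of the inequality needed to define an augmented valuation based at $\nu$.

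Next, I would transfer the MacLane-Vaqui\'e key polynomial property from $Q_1$ to $Q_2$ via the $\nu$-equivalence. Because $\inv_\nu(Q_1)=\inv_\nu(Q_2)$, multiplication in $\mathrm{gr}_\nu(K[x])$ gives $\inv_\nu(Q_1 g)=\inv_\nu(Q_2 g)$ for every $g\in K[x]$, so $Q_1\mid_\nu f\Llr Q_2\mid_\nu f$ for every $f\in K[x]$. Since $Q_1$ is a MacLane-Vaqui\'e key polynomial for $\nu$ and $\deg(Q_1)=\deg(Q_2)$, both \textbf{(KP1)} and \textbf{(KP2)} then transfer verbatim to $Q_2$. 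Thus $Q_2$ is a MacLane-Vaqui\'e key polynomial for $\nu$.

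Finally, I would compare the two valuation maps. Take any $f\in K[x]$ and write its $Q_2$-expansion $f=f_0+f_1Q_2+\ldots+f_rQ_2^r$, where $\deg(f_i)<\deg(Q_2)=\deg(Q_1)$. Since the $Q_1$-expansion of each $f_i$ reduces to $f_i$ itself, $\nu_1(f_i)=\nu(f_i)$. Therefore
\[
\nu_2(f)=\min_{0\leq i\leq r}\{\nu_1(f_i)+i\gamma_2\}=\min_{0\leq i\leq r}\{\nu(f_i)+i\gamma_2\},
\]
which is precisely the definition of $[\nu;\nu_2(Q_2)=\gamma_2]$. The main obstacle I expect is cleanly justifying $Q_1\sim_\nu Q_2$ and deducing that the MacLane-Vaqui\'e key polynomial property is invariant under $\nu$-equivalence; once that is in hand, the rest is a straightforward degree bookkeeping argument.
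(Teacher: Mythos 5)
Your proposal is correct and follows essentially the same route as the paper: use Proposition \ref{Lemainteressantemasfalo} to get $\nu(Q_2-Q_1)=\gamma_1>\nu(Q_1)$, deduce $Q_1\sim_\nu Q_2$, transfer the MacLane-Vaqui\'e key polynomial property from $Q_1$ to $Q_2$, and finish with the degree bookkeeping on $Q_2$-expansions. You simply supply the justification (via $\inv_\nu(Q_1 g)=\inv_\nu(Q_2 g)$) for the transfer step that the paper states without proof.
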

\begin{proof}
For a polynomial $f\in K[x]$ let
\[
f=f_0+f_1Q_2+\ldots+f_rQ_2^r
\]
be the $Q_2$-expansion of $f$. If $Q_2$ is a key polynomial for $\nu$, then
\[
\nu_2(f)=\min\{\nu_1(f_i)+i\gamma_2\}=\min\{\nu(f_i)+i\gamma_2\}.
\]
Hence, we only need to show that $Q_2$ is a key polynomial for $\nu$.

By Proposition \ref{Lemainteressantemasfalo}
\[
\nu(Q_2-Q_1)=\gamma_1>\nu(Q_1)
\]
and hence $Q_2\sim_\nu Q_1$. This, and the fact that $Q_1$ is a key polynomial for $\nu$ imply that $Q_2$ is a key polynomial for $\nu$.
\end{proof}

\begin{Que}\label{quaesobregoblso}
Is the converse of the previous corollary true? More precisely, assume that
\[
\nu_1=[\nu;\nu_1(Q_1)=\gamma_1]\mbox{ and }\nu_2=[\nu;\nu_2(Q_2)=\gamma_2],
\]
with $\deg(Q_2)=\deg(Q_1)$ and $\gamma_1<\gamma_2$. Is it true that
\begin{equation}\label{reqajhsijdmasovaquidoemanf}
\nu_2=[\nu_1;\nu_2(Q_2)=\gamma_2]?
\end{equation}
\end{Que}

If \eqref{reqajhsijdmasovaquidoemanf} is satisfied, then $\nu_1\leq \nu_2$. Observe that in the construction of this section, we used the property $\nu_1\leq\nu_2$ rather than \eqref{reqajhsijdmasovaquidoemanf}. The next lemma gives an easy criterium of when $\nu_1\leq\nu_2$.

\begin{Lema}
Let $\nu$ be a valuation on $K[x]$ and
\[
\nu_1=[\nu;\nu_1(Q_1)=\gamma_1]\mbox{ and }\nu_1=[\nu;\nu_2(Q_2)=\gamma_2]
\]
be two augmented valuations. Assume that $\deg(Q_1)=\deg(Q_2)$ and that $\gamma_1<\gamma_2$. Then, 
\[
\gamma_1=\nu_1(Q_1)\leq \nu_2(Q_1)\Llr \nu_1\leq \nu_2.
\]
\end{Lema}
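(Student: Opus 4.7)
The plan is to handle this as a direct computation via the $Q_1$-expansion. The implication ($\Leftarrow$) is trivial: if $\nu_1 \leq \nu_2$ holds on all of $K[x]$, then specializing to $f = Q_1$ gives $\gamma_1 = \nu_1(Q_1) \leq \nu_2(Q_1)$.

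For ($\Rightarrow$), suppose $\gamma_1 \leq \nu_2(Q_1)$, and take an arbitrary $f \in K[x]$ with $Q_1$-expansion $f = f_0 + f_1 Q_1 + \ldots + f_r Q_1^r$ (so $\deg(f_i) < \deg(Q_1)$ for each $i$). By definition of $\nu_1$ we have
\[
\nu_1(f) = \min_{0 \leq i \leq r}\{\nu(f_i) + i\gamma_1\}.
\]
The key point is that since $\deg(f_i) < \deg(Q_1) = \deg(Q_2)$, the $Q_2$-expansion of $f_i$ is simply $f_i$ itself, so $\nu_2(f_i) = \nu(f_i)$. Applying \textbf{(V1)} and \textbf{(V2)} for $\nu_2$ (which is a valuation by Theorem \ref{Thmaugmentedval}) yields
\[
\nu_2(f) \geq \min_{0 \leq i \leq r}\{\nu_2(f_i) + i\nu_2(Q_1)\} = \min_{0 \leq i \leq r}\{\nu(f_i) + i\nu_2(Q_1)\}.
\]
Using the hypothesis $\nu_2(Q_1) \geq \gamma_1$ in each term, we conclude
\[
\nu_2(f) \geq \min_{0 \leq i \leq r}\{\nu(f_i) + i\gamma_1\} = \nu_1(f),
\]
which is what we wanted.

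There is essentially no obstacle here; the only subtlety worth flagging is the identification $\nu_2(f_i) = \nu(f_i)$ for $\deg(f_i) < \deg(Q_2)$, which relies precisely on the equality $\deg(Q_1) = \deg(Q_2)$ appearing in the hypothesis. Were the degrees different, the coefficients in the $Q_1$-expansion would not automatically be coefficients in the $Q_2$-expansion, and the comparison $\nu_2(f_i) = \nu(f_i)$ would fail.
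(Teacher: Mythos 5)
Your argument is correct and is essentially identical to the paper's: both expand $f$ in powers of $Q_1$, use that $\nu_2(f_i)=\nu(f_i)$ since $\deg(f_i)<\deg(Q_2)$, apply \textbf{(V1)}, \textbf{(V2)} for $\nu_2$ term by term, and substitute $\nu_2(Q_1)\geq\gamma_1$. Your explicit flagging of where the hypothesis $\deg(Q_1)=\deg(Q_2)$ is used is a nice touch, but there is no substantive difference in approach.
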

\begin{proof}
The implication $``\Longleftarrow"$ is trivial. For the converse, assume that $\gamma_1=\nu_1(Q_1)\leq \nu_2(Q_1)$. For a given $f\in K[x]$, let
\[
f=f_0+f_1Q_1+\ldots+f_rQ_1^r
\]
be the $Q_1$-expansion of $f$. Then
\[
\nu_2(f)\geq\min_{0\leq i\leq r}\{\nu_2(f_iQ_1^i)\}=\min_{0\leq i\leq r}\{\nu(f_i)+i\nu_2(Q_1)\}\geq\min_{0\leq i\leq r}\{\nu(f_i)+i\gamma_1\}=\nu_1(f).
\]
\end{proof}

\begin{Obs}
We observe that in the situation above $\nu(Q_2)=\nu(Q_1)$. Indeed, if $\nu(Q_1)<\nu(Q_2)$, then
\[
\nu(Q_2-(Q_2-Q_1))=\nu(Q_2)>\nu(Q_1)
\]
and consequently $Q_1\sim_\nu Q_2-Q_1$ which is a contradiction to \textbf{(KP2)}. The case $\nu(Q_2)<\nu(Q_1)$ is analogous.
\end{Obs}

The next example shows that in our situation, $\nu_1\leq\nu_2$ (consequently \eqref{reqajhsijdmasovaquidoemanf}) is not necessarily true.
\begin{Exa}
Let $\nu_t$ be the $t$-adic valuation on $K=k(t)$ and extend it to $K[x]$ by defining
\[
\nu(a_0+a_1x+\ldots+a_rx^r)=\min_{1\leq i\leq r}\{\nu_t(a_i)+i\}\mbox{, i.e., }\nu=[\nu_t;\nu(x)=1].
\]
The polynomials $Q_1=x-t$ and $Q_2=x-t-t^2$ are MacLane-Vaqui\'e key polynomials for $\nu$. Define
\[
\nu_1=[\nu;\nu_1(Q_1)=3]\mbox{ and }\nu_2=[\nu;\nu_2(Q_2)=4].
\]
Then $\nu_2(Q_1)=\min\{4,\nu(Q_2-Q_1)\}=2<3=\nu_1(Q_1)$.
\end{Exa}

\section{Key polynomials vs MacLane-Vaqui\'e key polynomials}\label{comparison}
The main goal of this section is to relate MacLane-Vaqui\'e key polynomials with key polynomials. We start with the following result.
\begin{Teo}\label{theomremnaequilvamahs}
Let $\nu$ be a valuation on $K[x]$, and take a key polynomial $Q$ and $Q'\in \Psi(Q)$. Then $Q$ and $Q'$ are MacLane-Vaqui\'e key polynomials for $\nu_Q$. Moreover, $\nu_{Q'}=[\nu_Q;\nu_{Q'}(Q')=\nu(Q')]$.
\end{Teo}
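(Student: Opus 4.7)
The plan is to split the statement into three parts and prove them in order: first, that $Q$ is a MacLane-Vaqui\'e key polynomial for $\nu_Q$; second, that $Q'$ is also a MacLane-Vaqui\'e key polynomial for $\nu_Q$; and third, that $\nu_{Q'}=[\nu_Q;\nu_{Q'}(Q')=\nu(Q')]$.

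The heart of the argument for the first two parts is the translation of $\nu_Q$-divisibility into ordinary divisibility in the graded algebra ${\rm gr}_Q(K[x])$. More precisely, for any $P,f\in K[x]$ I will argue that $P\mid_{\nu_Q}f$ if and only if $\inv_Q(P)\mid\inv_Q(f)$ in ${\rm gr}_Q(K[x])$. One direction is immediate from the definition of $\sim_{\nu_Q}$; the other uses the (essentially definitional) fact that every nonzero homogeneous element of ${\rm gr}_Q(K[x])$ is of the form $\inv_Q(h)$ for some $h\in K[x]$, so that any factor $\phi$ appearing in a homogeneous factorization of $\inv_Q(f)$ can be realized as $\inv_Q(h)$, yielding $f\sim_{\nu_Q}Ph$. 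With this dictionary, \textbf{(KP1)} for $Q$ becomes Corollary \ref{Corlsobremaclaneparatruncame} (since $y=\inv_Q(Q)$ is transcendental over $R_Q$, hence prime in $R_Q[y]$), and \textbf{(KP2)} for $Q$ is a restatement of Corollary \ref{Corlsobremaclaneparatruncame1}. For $Q'$, property \textbf{(KP2)} follows from Lemma \ref{lemaqueajudanapropsdoirredl}, and property \textbf{(KP1)} follows from the proposition showing that $\langle\inv_Q(Q')\rangle$ is a prime ideal of ${\rm gr}_Q(K[x])$. The inequality $\nu(Q')>\nu_Q(Q')$ required to make sense of the augmentation $[\nu_Q;\nu_{Q'}(Q')=\nu(Q')]$ is built into the hypothesis $Q'\in\Psi(Q)$.

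For the formula $\nu_{Q'}=[\nu_Q;\nu_{Q'}(Q')=\nu(Q')]$, the key observation is that $\deg(Q')=\alpha(Q)$ is, by definition of $\alpha(Q)$, the smallest degree on which $\nu_Q$ and $\nu$ disagree. Hence for any $g\in K[x]$ with $\deg(g)<\deg(Q')$ we have $\nu_Q(g)=\nu(g)$. Given $f\in K[x]$ with $Q'$-expansion $f=f_0+f_1Q'+\ldots+f_r(Q')^r$, each coefficient $f_i$ satisfies $\deg(f_i)<\deg(Q')$, so $\nu_Q(f_i)=\nu(f_i)$. Both sides of the claimed identity are therefore computed as the same minimum $\min_i\{\nu(f_i)+i\nu(Q')\}$.

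The only delicate step in the whole argument is the lifting of factorizations in ${\rm gr}_Q(K[x])$ back to $\nu_Q$-equivalences in $K[x]$; however, this is handled uniformly by the homogeneous-lifting observation above, which is a direct consequence of the way ${\rm gr}_Q(K[x])$ is built out of the quotients $\mathcal{P}_\gamma/\mathcal{P}_\gamma^+$.
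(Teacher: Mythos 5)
Your proof is correct and takes essentially the same route as the paper: Corollaries \ref{Corlsobremaclaneparatruncame} and \ref{Corlsobremaclaneparatruncame1} give \textbf{(KP1)} and \textbf{(KP2)} for $Q$, Lemma \ref{lemaqueajudanapropsdoirredl} and the primality of $\langle\inv_Q(Q')\rangle$ give them for $Q'$, and the augmentation formula follows from $\nu_Q(f_i)=\nu(f_i)$ for $\deg(f_i)<\deg(Q')=\alpha(Q)$. The one thing you add that the paper leaves implicit, and which is worth keeping, is the explicit dictionary $P\mid_{\nu_Q}f\Llr\inv_Q(P)\mid\inv_Q(f)$ together with the homogeneous-lifting observation that justifies the reverse direction.
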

\begin{proof}
Corollaries \ref{Corlsobremaclaneparatruncame} and \ref{Corlsobremaclaneparatruncame1} give us that $Q$ is a MacLane-Vaqui\'e key polynomial for $\nu_Q$. Also, Lemma \ref{lemaqueajudanapropsdoirredl} and Corollary \ref{Corlsobremaclaneparatruncame3} imply that $Q'$ is MacLane-Vaqui\'e key polynomial for $\nu_Q$. For the last statement, we observe that $\nu_Q(f)=\nu(f)$ for every $f\in K[x]$ with $\deg(f)<\deg(Q')$ because of the minimality of the degre of $Q'$. Hence
\[
\nu_{Q'}(f_0+\ldots+f_rQ'^r)=\min_{0\leq i\leq r}\{\nu(f_iQ'^i)\}=\min_{0\leq i\leq r}\{\nu_Q(f_i)+i\nu(Q')\}.
\]
\end{proof}

\begin{Teo}\label{maintheomrlimitvsvauqiemabca}
Assume that $\nu$ is a valuation on $K[x]$ and that $Q$ is a limit key polynomial for $\nu$. Then the family
\[
\mathcal F=\{(\nu_{Q'}, Q',\nu(Q'))\}_{Q'\in\Psi(Q_-)}
\]
ordered by $\epsilon$, is a continued family of augmented valuations on $K[x]$ and $Q$ is a limit MacLane-Vaqui\'e key polynomial for $\nu$. Moreover,
\[
\nu_Q=[\nu_{\mathcal F};\nu_Q(Q)=\nu(Q)].
\]
\end{Teo}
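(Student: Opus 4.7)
The plan is to establish the three conclusions in sequence, building on Theorem~\ref{theomremnaequilvamahs} (which identifies each $\nu_{Q'}$ as an augmented valuation) and applying Proposition~\ref{Propositibnkelimivaquie} to recognize $Q$ as a limit MacLane-Vaqui\'e key polynomial. First, Theorem~\ref{theomremnaequilvamahs} gives $\nu_{Q'}=[\nu_{Q_-};\nu_{Q'}(Q')=\nu(Q')]$ for every $Q'\in\Psi(Q_-)$, so every member of $\mathcal F$ is augmented from the common base $\nu_{Q_-}$; condition (K1) forces $\deg Q'=\alpha(Q_-)=\deg Q_-$ for every $Q'\in\Psi(Q_-)$, and (K2) says $\{\nu(Q')\}$ has no maximum, which are exactly the remaining clauses in the definition of a continued family. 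To then apply Lemma~\ref{lemaesquscpamimport} and Proposition~\ref{Propositibnkelimivaquie} I would verify $\nu_{Q'_1}\leq\nu_{Q'_2}$ whenever $\epsilon(Q'_1)\leq\epsilon(Q'_2)$: since $\nu_{Q'_1}(Q'_1)=\nu(Q'_1)$ trivially, Corollary~\ref{corsobreequaldepsoidkey} gives $\nu_{Q'_2}(Q'_1)=\nu(Q'_1)$, and the unlabeled lemma following Question~\ref{quaesobregoblso} then yields $\nu_{Q'_1}\leq\nu_{Q'_2}$.

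To invoke Proposition~\ref{Propositibnkelimivaquie} I must show $Q\in\Phi(\mathcal F)$. Degree minimality is the easy half: any $p\in K[x]$ with $\deg p<\deg Q$ fails (K3) by (K4), so some $Q'_p\in\Psi(Q_-)$ gives $\nu_{Q'_p}(p)=\nu(p)$, and Corollary~\ref{corsobreequaldepsoidkey} extends this equality to all $Q'\geq Q'_p$, placing $p\in\mathcal C(\mathcal F)$ and hence outside $\overline\Phi(\mathcal F)$ by the remark after Lemma~\ref{lemaesquscpamimport}. Thus $\deg Q\leq d(\mathcal F)$.

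The main obstacle is showing $Q\in\overline\Phi(\mathcal F)$, i.e., the strict monotonicity $\nu_{Q'_1}(Q)<\nu_{Q'_2}(Q)$. My plan is a graded-algebra argument. Set $h:=Q'_2-Q'_1$ with $\deg h<\deg Q_-$; since $\nu(Q'_2)>\nu(Q'_1)$ by Proposition~\ref{Propcompkeypol}(iii), the relation $\inv_\nu(Q'_1)+\inv_\nu(h)=0$ holds in the grade-$\nu(Q'_1)$ piece of $\mathrm{gr}_\nu(K[x])$. For the $Q'_1$-expansion $Q=\sum_ig_iQ'_1{}^i$ with $c:=\nu_{Q'_1}(Q)$ and $I:=\{i:\nu(g_i)+i\nu(Q'_1)=c\}$, the cancellation $\nu(Q)>c$ from (K3) translates to $\sum_{i\in I}\inv_\nu(g_i)\inv_\nu(Q'_1)^i=0$ in the grade-$c$ piece of $\mathrm{gr}_\nu(K[x])$. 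Next, rewrite each $Q'_1{}^i=(Q'_2-h)^i$ and regroup to obtain $Q=\sum_mG_mQ'_2{}^m$, whose constant term $G_0=\sum_ig_i(-h)^i$ satisfies $\nu_{Q'_2}(G_0)>c$: indeed, the canonical identification of the low-degree generators of $R_{Q'_2}$ with those of the corresponding subring of $\mathrm{gr}_\nu(K[x])$ sends $\inv_{Q'_2}(h)$ to $\inv_\nu(h)=-\inv_\nu(Q'_1)$, and transports the cancellation above to $\sum_{i\in I}\inv_{Q'_2}(g_i(-h)^i)=0$. For $m\geq 1$, straightforward estimates give $\nu_{Q'_2}(G_mQ'_2{}^m)\geq c+m(\nu(Q'_2)-\nu(Q'_1))>c$. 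Hence $\nu_{Q'_2}(Q)>c$. Combined with degree minimality, $Q\in\Phi(\mathcal F)$, and Proposition~\ref{Propositibnkelimivaquie} identifies $Q$ as a limit MacLane-Vaqui\'e key polynomial for $\mathcal F$.

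Finally, for the augmentation formula, Theorem~\ref{Theoremsobreexntvalaugvaeui} applied with $\gamma=\nu(Q)>\nu_{Q'}(Q)$ (by (K3)) gives $[\nu_{\mathcal F};\nu_Q(Q)=\nu(Q)](f)=\min_i\{\nu_{\mathcal F}(f_i)+i\nu(Q)\}$ for the $Q$-expansion $f=\sum_if_iQ^i$. Since $\deg f_i<\deg Q$, the degree-minimality argument above shows $\nu_{\mathcal F}(f_i)=\nu(f_i)$, so this minimum coincides with $\nu_Q(f)=\min_i\{\nu(f_i)+i\nu(Q)\}$, completing the proof.
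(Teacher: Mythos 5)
Your overall architecture — set up the continued family, establish $\nu_{Q'_1}\leq\nu_{Q'_2}$, show $Q\in\Phi(\mathcal F)$, then invoke Proposition~\ref{Propositibnkelimivaquie} and read off the augmentation formula — is the same as the paper's. But there is one genuine gap and one genuinely different route.

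The gap: you check the three clauses listed inside the definition of a \emph{continued} family (common base $\nu_{Q_-}$, equal degrees, $\{\nu(Q')\}$ without maximum), but by definition a continued family is a special kind of \emph{family of augmented iterated valuations} (Definition~\ref{defoffamiliaugmentedintereda}), and you never verify that underlying structure. Concretely, for $Q'_1,Q'_2\in\Psi(Q_-)$ with $\epsilon(Q'_1)<\epsilon(Q'_2)$ you must show $\nu_{Q'_2}=[\nu_{Q'_1};\nu_{Q'_2}(Q'_2)=\nu(Q'_2)]$ and $Q'_2\nsim_{\nu_{Q'_1}}Q'_1$. Your alternative check (Corollary~\ref{corsobreequaldepsoidkey} plus the unlabeled lemma after Question~\ref{quaesobregoblso}) only yields the weaker inequality $\nu_{Q'_1}\leq\nu_{Q'_2}$. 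This is not a cosmetic omission: Lemma~\ref{lemaesquscpamimport} (and hence Remark~\ref{maksfinitoskesmoaf}, the disjointness $K[x]=\overline\Phi(\mathcal F)\sqcup\mathcal C(\mathcal F)$, Corollary~\ref{corquepareceprimosnaksk}, and Proposition~\ref{Propositibnkelimivaquie}, all of which you invoke) lean on the iterated-family structure in their proofs. The paper fills this hole with Lemma~\ref{lemasobrearelatimlimvaqueicomlimite}, which observes that $Q'_2\in\Psi(Q'_1)$ (via Proposition~\ref{Propcompkeypol}~\textbf{(ii)}) and applies Theorem~\ref{theomremnaequilvamahs}; you need that lemma or an equivalent.

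The different route: for $Q\in\overline\Phi(\mathcal F)$, the paper argues in one line, implicitly combining the dichotomy of Lemma~\ref{lemaesquscpamimport} with Corollary~\ref{corsobreequaldepsoidkey} (if the sequence $\{\nu_{Q'}(Q)\}$ stabilized, its stable value would have to be $\nu(Q)$, contradicting \textbf{(K3)}). You instead prove the strict inequality $\nu_{Q'_1}(Q)<\nu_{Q'_2}(Q)$ directly, via initial forms: the cancellation from \textbf{(K3)} gives $\sum_{i\in I}\inv_\nu(g_i)\inv_\nu(Q'_1)^i=0$ in $\mathrm{gr}_\nu(K[x])$, you substitute $\inv_\nu(Q'_1)=-\inv_\nu(h)$ with $h=Q'_2-Q'_1$, and transport this relation to $\mathrm{gr}_{Q'_2}(K[x])$ through the graded-ring identification of $R_{Q'_2}$ with the corresponding subring of $\mathrm{gr}_\nu(K[x])$ (valid because $\nu_{Q'_2}$ and $\nu$ agree on $K[x]_{\alpha(Q_-)}$), then finish with the elementary estimate on the $G_mQ'_2{}^m$, $m\geq 1$. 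I checked this and it is correct, though the ``canonical identification'' deserves an explicit sentence. Your route is more computational but completely explicit and does not pass through Lemma~\ref{lemaesquscpamimport} for this particular step; the paper's is shorter at the cost of a slightly cryptic appeal to Corollary~\ref{corsobreequaldepsoidkey}. The remaining steps — degree minimality via \textbf{(K4)} and Corollary~\ref{corsobreequaldepsoidkey}, and the identity $\nu_Q=[\nu_{\mathcal F};\nu_Q(Q)=\nu(Q)]$ — match the paper's argument.
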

In order to prove Theorem \ref{maintheomrlimitvsvauqiemabca}, we will need the following Lemma.
\begin{Lema}\label{lemasobrearelatimlimvaqueicomlimite}
Assume that $\nu$ is a valuation on $K[x]$ and that $Q$ is a key polynomial for $\nu$. For $Q',Q''\in \Psi(Q)$, if $\epsilon(Q')<\epsilon(Q'')$, then
\[
\nu_{Q''}=[\nu_{Q'};\nu_{Q''}(Q'')=\nu(Q'')].
\]
Moreover, $Q'\nsim_{Q'}Q''$
\end{Lema}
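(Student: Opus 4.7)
The plan is to handle the ``Moreover'' statement first, then compute $\nu_{Q'}(Q'')$, then verify that $Q''$ is a MacLane-Vaqui\'e key polynomial for $\nu_{Q'}$ (so that the notation $[\nu_{Q'};\nu_{Q''}(Q'')=\nu(Q'')]$ is meaningful), and finally check the formula by a direct expansion.

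First, for $Q'\nsim_{Q'}Q''$: since $\deg(Q')=\deg(Q'')=\alpha(Q)$ and $\epsilon(Q')<\epsilon(Q'')$, Proposition \ref{Propcompkeypol}\textbf{(iii)} gives $\nu(Q')<\nu(Q'')$. Set $\delta:=Q''-Q'$, so $\deg(\delta)<\deg(Q')$; hence $\nu_{Q'}(\delta)=\nu(\delta)=\nu(Q')$ (using $\nu(Q'')>\nu(Q')$). If one had $Q'\sim_{Q'}Q''$, then $\nu_{Q'}(\delta)>\nu_{Q'}(Q')=\nu(Q')$, a contradiction.

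Next, computing $\nu_{Q'}(Q'')$ from the $Q'$-expansion $Q''=\delta+1\cdot Q'$ gives
\[
\nu_{Q'}(Q'')=\min\{\nu(\delta),\nu(Q')\}=\nu(Q')<\nu(Q''),
\]
which is the required strict inequality $\nu_{Q'}(Q'')<\nu_{Q''}(Q'')=\nu(Q'')$.

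To see that $Q''$ is a MacLane-Vaqui\'e key polynomial for $\nu_{Q'}$, I will pass to the graded algebra ${\rm gr}_{Q'}(K[x])=R_{Q'}[y]$ of Section \ref{gradedalgebras} (with $y=\inv_{Q'}(Q')$). From the expansion $Q''=Q'+\delta$ and the equality of values $\nu_{Q'}(Q')=\nu_{Q'}(\delta)=\nu(Q')$ we get
\[
\inv_{Q'}(Q'')=y+\inv_{Q'}(\delta),
\]
a monic linear polynomial in $y$ over the domain $R_{Q'}$. In particular $\inv_{Q'}(Q'')$ is prime in $R_{Q'}[y]$, which yields \textbf{(KP1)}; and any element of $R_{Q'}$ divisible by $y+\inv_{Q'}(\delta)$ must vanish, so if $\deg(f)<\deg(Q')=\deg(Q'')$ (forcing $\inv_{Q'}(f)\in R_{Q'}$) and $Q''\mid_{\nu_{Q'}}f$, then $\inv_{Q'}(f)=0$ which is impossible for $f\neq 0$ since $\nu_{Q'}(f)=\nu(f)$ is finite. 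This gives \textbf{(KP2)}.

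Finally, for any $f\in K[x]$ with $Q''$-expansion $f=f_0+f_1Q''+\ldots+f_rQ''^r$, we have $\deg(f_i)<\deg(Q'')=\deg(Q')$, hence $\nu_{Q'}(f_i)=\nu(f_i)$, and therefore
\[
\nu_{Q''}(f)=\min_i\{\nu(f_i)+i\nu(Q'')\}=\min_i\{\nu_{Q'}(f_i)+i\nu(Q'')\},
\]
which is exactly $[\nu_{Q'};\nu_{Q''}(Q'')=\nu(Q'')](f)$. The main obstacle is the verification that $Q''$ satisfies \textbf{(KP1)} and \textbf{(KP2)} over $\nu_{Q'}$ (the hypothesis only gives them over $\nu$ via Theorem \ref{theomremnaequilvamahs}); the graded-algebra computation above bypasses this cleanly.
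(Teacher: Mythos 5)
Your proof is correct, but for the main assertion it takes a genuinely different route from the paper. The paper's proof is very short: it uses Proposition~\ref{Propcompkeypol}\textbf{(ii)} to get $\nu_{Q'}(Q'')<\nu(Q'')$, observes that $\deg(Q'')=\alpha(Q)=\alpha(Q')$ so that $Q''\in\Psi(Q')$ (the equality $\alpha(Q')=\alpha(Q)$ is implicit, following from Corollary~\ref{corsobreequaldepsoidkey}), and then invokes Theorem~\ref{theomremnaequilvamahs} verbatim to conclude both that $Q''$ is a MacLane-Vaqui\'e key polynomial for $\nu_{Q'}$ and that the augmented-valuation formula holds. Your proof instead establishes MV-key-polynomialhood from scratch by the pleasant observation that, since $Q'$ and $Q''$ have the same degree and $\nu_{Q'}(Q')=\nu_{Q'}(\delta)$ for $\delta=Q''-Q'$, one has $\inv_{Q'}(Q'')=y+\inv_{Q'}(\delta)$, a monic linear polynomial over the domain $R_{Q'}$, hence prime in $R_{Q'}[y]$; \textbf{(KP1)} and \textbf{(KP2)} then drop out by elementary degree considerations in $y$. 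The ``Moreover'' part is proved the same way in both. What the paper's route buys is brevity, at the cost of quietly relying on $\alpha(Q')=\alpha(Q)$; what your route buys is self-containment and an explicit, transparent description of the initial form $\inv_{Q'}(Q'')$ that sidesteps $\Psi(Q')$ and the irreducibility machinery of Section~\ref{gradedalgebras} (Lemma~\ref{lemaqueajudanapropsdoirredl}, Corollary~\ref{Corlsobremaclaneparatruncame3}) entirely, exploiting the special feature $\deg(Q')=\deg(Q'')$ of the present situation. One small point to flag if you expand the sketch: when passing from $\inv_{Q'}(Q'')\mid \inv_{Q'}(f)\,\inv_{Q'}(g)$ in the ring $R_{Q'}[y]$ back to the relation $Q''\mid_{\nu_{Q'}}f$ or $Q''\mid_{\nu_{Q'}}g$, you should note that the cofactor produced by primality is automatically homogeneous (the grading group is totally ordered) and that every nonzero homogeneous element of ${\rm gr}_{Q'}(K[x])$ is of the form $\inv_{Q'}(h)$; this is routine but is exactly the bridge between ring-theoretic divisibility and $\mid_{\nu_{Q'}}$.
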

\begin{proof}
Since $\epsilon(Q')<\epsilon(Q'')$, by Proposition \ref{Propcompkeypol} \textbf{(ii)}, we have that $\nu_{Q'}(Q'')<\nu(Q'')$. Since $\deg(Q')=\deg(Q'')=\alpha(Q)$ we conclude that $Q''\in \Psi(Q')$. Hence, Theorem \ref{theomremnaequilvamahs} gives us that $Q''$ is a MacLane-Vaqui\'e key polynomial for $\nu_{Q''}$ and $\nu_{Q''}=[\nu_{Q'};\nu_{Q''}(Q'')=\nu(Q'')]$.

Since $\deg(Q')=\deg(Q'')$ and $\epsilon(Q')<\epsilon(Q'')$, Proposition \ref{Propcompkeypol} \textbf{(iii)} gives us that $\nu(Q')<\nu(Q'')$ and hence
\[
\nu_{Q'}(Q'-Q'')=\nu(Q'-Q'')=\nu(Q')=\nu_{Q'}(Q').
\]
Consequently, $Q'\nsim_{Q'}Q''$.
\end{proof}
\begin{proof}[Proof of Theorem \ref{maintheomrlimitvsvauqiemabca}]
We will start by proving that $\mathcal F$ is an iterated family of augmented valuations. Take $Q'\in \Psi(Q_-)$ and assume that it is not the smallest element of $\Psi(Q_-)$. If $Q'$ admits predecessor, we set $Q'_-$ to be that predecessor. If not, set $Q'_-$ to be any element in $\Psi(Q_-)$ such that $\epsilon(Q'_-)<\epsilon(Q')$. We will show that $Q'_-$ satisfy the conditions of the definition of iterated family of valuations (for $\alpha=Q'$ and $\alpha_-=Q'_-$).

Observe that by Lemma \ref{lemasobrearelatimlimvaqueicomlimite} we have
\[
\nu_{Q'}=[\nu_{Q'_-};\nu_{Q'}(Q')=\nu(Q')].
\]
Moreover, since $\deg(Q')=\alpha(Q_-)$ for every $Q'\in\Psi(Q_-)$, the conditions on the degrees are automatically satisified.

Assume that we are in case \textbf{(i)}, i.e., that $Q'$ admits a predecessor. By definition $Q'_-$ is that predecessor and by Lemma \ref{lemasobrearelatimlimvaqueicomlimite} we have $Q'_-\nsim_{Q'_-}Q'$. Hence, the condition of Definition \ref{defoffamiliaugmentedintereda} is satisified.

Assume that we are in case \textbf{(ii)}, i.e., that $Q'$ does not admit a predecessor, and take $Q''\in\Psi(Q_-)$ with
\[
\epsilon(Q'_-)<\epsilon(Q'')<\epsilon(Q').
\]
By Lemma \ref{lemasobrearelatimlimvaqueicomlimite}, we have that
\[
\nu_{Q''}=[\nu_{Q'_-};\nu_{Q''}(Q'')=\nu(Q'')]\mbox{ and }\nu_{Q'}=[\nu_{Q''};\nu_{Q'}(Q')=\nu(Q')].
\]
Therefore, $\mathcal F$ is a family of iterated valuations.

The fact that it is continued follows from the fact that for every $Q'\in\Psi(Q_-)$ we have (by Theorem \ref{theomremnaequilvamahs}) that
\[
\nu_{Q''}=[\nu_{Q_-};\nu_{Q''}(Q'')=\nu(Q'')].
\]

It remains to prove that $Q$ is a limit MacLane-Vaqui\'e key polynomial for $\mathcal F$ and that $\nu_Q=[\nu_{\mathcal F};\nu_Q(Q)=\nu(Q)]$. Observe that for $Q',Q''\in\Psi(Q_-)$, with $\epsilon(Q')<\epsilon(Q'')$, Lemma \ref{lemasobrearelatimlimvaqueicomlimite}, we have that $\nu_{Q'}\leq \nu_{Q''}$. Hence, if we prove that $Q\in\Phi(\mathcal F)$ the result will follow from Proposition \ref{Propositibnkelimivaquie}.

By \textbf{(K3)} we have $\nu_{Q'}(Q)<\nu(Q)$ for every $Q'\in \Psi(Q_-)$. In particular, $\{\nu_{Q'}(Q)\}_{Q'\in\Psi(Q_-)}$ is increasing (see Corollary \ref{corsobreequaldepsoidkey}). Hence, $Q\in\overline\Phi(\mathcal F)$. Assume that $f\in \overline\Phi(\mathcal F)$. Then, $\{\nu_{Q'}(f)\}_{Q'\in\Psi(Q_-)}$ is increasing and consequently the condition \textbf{(K3)} is satisfied for $f$. By \textbf{(K4)} we conclude that $\deg(Q)\leq \deg(f)$. Consequently, $Q\in \Phi(\mathcal F)$.

For any polynomial $f\in K[x]$, let $f=f_0+f_1Q+\ldots+f_rQ^r$ be the $Q$-expansion of $f$. Then there exists $Q'\in \Psi(Q_-)$ such that $\nu_{Q''}(f_i)=\nu(f_i)$ for every $i$, $0\leq i\leq r$ and $Q''\in\Psi(Q_-)$ with $\epsilon(Q')\leq \epsilon(Q'')$. In particular, $\nu_{\mathcal F}(f_i)=\nu(f_i)$. Hence,
\[
\nu_Q(f)=\min\{\nu(f_iQ^i)\}=\min\{\nu_{\mathcal F}(f_i)+\nu(Q)\}
\]
and therefore, $\nu_Q=[\nu_{\mathcal F};\nu_Q(Q)=\nu(Q)]$.
\end{proof}
The next corollary is the main result of \cite{Mac_1} and \cite{Vaq_1}.
\begin{Cor}
For every valuation $\nu$ on $K[x]$, there exists a family of augmented iterated valuations $\mathcal F=\{(\nu_\alpha,Q_\alpha,\gamma_\alpha)\}_{\alpha\in A}$ such that for every $f\in K[x]$, there exists $\alpha_0\in A$ for which $\nu(f)=\nu_\alpha(f)$ for every $\alpha\in A$, $\alpha\geq \alpha_0$.
\end{Cor}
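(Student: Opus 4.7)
The plan is to construct $\mathcal F$ by splicing together pieces coming from a complete well-ordered set of key polynomials for $\nu$, realizing each successive truncation $\nu_Q$ either as a single augmentation step via Theorem \ref{theomremnaequilvamahs} or, when $Q$ is a limit key polynomial, as the cap on a continued family given by Theorem \ref{maintheomrlimitvsvauqiemabca}.

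First, I would invoke Theorem \ref{Theoremexistencecompleteseqkpol} to obtain a complete set $\mathbf{Q} = \{Q_i\}_{i\in I}$ of key polynomials for $\nu$, well-ordered by $\epsilon$. For each $i \in I$ with a predecessor, Theorem \ref{definofkeypol} gives two mutually exclusive cases: either $Q_i \in \Psi(Q_{i-1})$, or $Q_i$ is a limit key polynomial attached to $Q_{i-1}$. In the first case, Theorem \ref{theomremnaequilvamahs} supplies the single step $\nu_{Q_i} = [\nu_{Q_{i-1}}; \nu_{Q_i}(Q_i) = \nu(Q_i)]$, and I would insert the triple $(\nu_{Q_i},Q_i,\nu(Q_i))$ into $\mathcal F$. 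In the second case, Theorem \ref{maintheomrlimitvsvauqiemabca} asserts that the family $\{(\nu_{Q'},Q',\nu(Q'))\}_{Q'\in\Psi(Q_{i-1})}$ is a continued family of augmented iterated valuations and that $\nu_{Q_i} = [\nu_{\mathcal F_{i}}; \nu_{Q_i}(Q_i)=\nu(Q_i)]$; I would insert this entire family followed by the triple $(\nu_{Q_i},Q_i,\nu(Q_i))$. The resulting concatenation, equipped with the natural order extending that of $\mathbf{Q}$, is the candidate $\mathcal F$.

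To finish the proof, given any $f \in K[x]$, completeness of $\mathbf{Q}$ yields some $Q_{i_0} \in \mathbf{Q}$ with $\deg(Q_{i_0}) \leq \deg(f)$ and $\nu_{Q_{i_0}}(f) = \nu(f)$. Corollary \ref{corsobreequaldepsoidkey} then ensures that $\nu_{Q_j}(f)=\nu(f)$ for every $j \geq i_0$; applying the same corollary to each $Q'$ appearing in a continued family inserted after $Q_{i_0}$ (so that $\epsilon(Q_{i_0})\leq \epsilon(Q')$) propagates the equality across the intermediate indices as well. Taking $\alpha_0$ to be the position of $Q_{i_0}$ in $\mathcal F$ delivers the conclusion.

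The main obstacle I expect is the bookkeeping at the transitions between a continued sub-family and the next augmentation step, where one must verify the compatibility conditions \textbf{(i)} and \textbf{(ii)} of Definition \ref{defoffamiliaugmentedintereda}: namely that the relevant predecessor is correctly identified, that the degrees match (they do, because every $Q'\in\Psi(Q_{i-1})$ has degree $\alpha(Q_{i-1})=\deg(Q_{i-1})$ by \textbf{(K1)}), and that the required non-$\nu$-equivalences hold (which follows from Lemma \ref{lemasobrearelatimlimvaqueicomlimite}). A secondary care is needed to ensure that the order type of $\mathcal F$ remains well-defined when $\Psi(Q_{i-1})$ itself has no maximum and is immediately followed by $Q_i$; here the construction places $Q_i$ as a strict supremum, consistent with the asymmetric treatment of limit indices allowed by Definition \ref{defoffamiliaugmentedintereda}\textbf{(ii)}.
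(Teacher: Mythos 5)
Your overall strategy mirrors the paper's: start from the complete well-ordered set $\mathbf{Q}$ furnished by Theorem~\ref{Theoremexistencecompleteseqkpol}, produce the iterated family via Theorems~\ref{theomremnaequilvamahs} and~\ref{maintheomrlimitvsvauqiemabca}, and close the completeness loop via Corollary~\ref{corsobreequaldepsoidkey}. That last step, which the paper leaves implicit (its proof ends at ``is a family of augmented iterated valuations''), you spell out correctly, and the $\epsilon$-monotonicity you invoke to propagate $\nu_{Q_j}(f)=\nu(f)$ along $\mathbf{Q}$ and through any inserted $Q'$ is sound.

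There is, however, a genuine problem in your case analysis. You split the construction at ``each $i\in I$ with a predecessor'' into: $Q_i\in\Psi(Q_{i-1})$, or $Q_i$ is a limit key polynomial attached to $Q_{i-1}$. The second case is actually vacuous for a complete set $\mathbf{Q}$. If $Q$ is a limit key polynomial with associated $Q_-$, then $\Psi(Q_-)$ has no maximal element (condition~\textbf{(K2)}), and completeness of $\mathbf{Q}$ forces $\mathbf{Q}\cap\Psi(Q_-)$ to be \emph{cofinal} in $\Psi(Q_-)$: for every $f\in\Psi(Q_-)$ one needs $Q'\in\mathbf{Q}$ with $\deg(Q')\le\deg(f)=\deg(Q_-)$ and $\nu_{Q'}(f)=\nu(f)$, and by Proposition~\ref{Propcompkeypol} and Corollary~\ref{corsobreequaldepsoidkey} any such $Q'$ lies in $\Psi(Q_-)$ with $\epsilon(Q')\ge\epsilon(f)$. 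Hence infinitely many elements of $\Psi(Q_-)$ already precede $Q$ in $\mathbf{Q}$, so $Q$ sits at a \emph{limit} ordinal position and has no immediate predecessor. As written, your construction never handles the position of the limit key polynomial, since that case falls outside the hypothesis ``$i$ has a predecessor,'' and the step you do describe (splicing the entire $\Psi(Q_{i-1})$ between $Q_{i-1}$ and $Q_i$) would moreover duplicate elements already present in $\mathbf{Q}$, creating an ordering headache.

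This is precisely why the paper's proof does \emph{not} insert anything: it forms $\mathcal{F}=\{(\nu_Q,Q^+,\nu(Q^+))\}_{Q\in\mathbf{Q}}$ directly, relying on the fact that the cofinal part of $\Psi(Q_-)$ already sitting inside $\mathbf{Q}$ supplies the continued family required by Definition~\ref{defoffamiliaugmentedintereda}\textbf{(ii)} at each limit position, and Theorem~\ref{maintheomrlimitvsvauqiemabca} then gives $\nu_Q=[\nu_{\mathcal{F}};\nu_Q(Q)=\nu(Q)]$. Your write-up would be repaired by dropping the insertion step, letting $\mathcal{F}$ be built from $\mathbf{Q}$ alone, and treating the limit-position indices separately using Definition~\ref{defoffamiliaugmentedintereda}\textbf{(ii)} together with Lemma~\ref{lemasobrearelatimlimvaqueicomlimite} for the successor links and Theorem~\ref{maintheomrlimitvsvauqiemabca} for the limit links.
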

\begin{proof}
By Theorem \ref{Theoremexistencecompleteseqkpol}, $\nu$ admits a sequence (ordered by $\epsilon$) $\textbf{Q}$ of key polynomials. Since $\textbf{Q}$ can be chosen to be well-odered, for every $Q\in \textbf{Q}$ we set $Q^+$ to be the next element (i.e., the element with smallest $\epsilon$ in $\{Q'\in \textbf{Q}\mid \epsilon(Q)<\epsilon(Q')\}$). By Theorems \ref{theomremnaequilvamahs} and \ref{maintheomrlimitvsvauqiemabca}, the family $\mathcal F=\{(\nu_Q,Q^+,\nu(Q^+))\}_{Q\in\textbf{Q}}$ is a family of augmented iterated valuations and this concludes our proof.
\end{proof}

\noindent{\footnotesize JOSNEI NOVACOSKI\\
Departamento de Matem\'atica--UFSCar\\
Rodovia Washington Lu\'is, 235\\
13565-905, S\~ao Carlos - SP, Brazil.\\
Email: {\tt josnei@dm.ufscar.br} \\\\

\end{document}